\documentclass[11pt,a4]{amsart}
\pdfoutput=1
\usepackage{amsmath}
\usepackage{amssymb} 
\usepackage{amscd} 
\usepackage[dvipdf]{graphicx} 
\usepackage[dvips]{color}
\usepackage[all]{xy} 
\usepackage{xcolor}

\usepackage{hyperref}
\hypersetup{
	colorlinks=true,
	linktoc=all,
    	linkcolor={red!50!black},
   	citecolor={blue!50!black},
  	urlcolor={blue!80!black}
	} 

\definecolor{dg}{rgb}{0.1,0.4,0.1}
\renewcommand{\labelenumi}{ $(\arabic{enumi})$ }

\newtheorem{theorem}{Theorem}[section]
\newtheorem{lemma}[theorem]{Lemma}

\newtheorem{proposition}[theorem]{Proposition}
\newtheorem{corollary}[theorem]{Corollary}
\newtheorem{claim}[theorem]{Claim}

\newtheorem{remark}[theorem]{Remark}
\newtheorem{question}[theorem]{Question}

\theoremstyle{definition}
\newtheorem{definition}[theorem]{Definition}
\newtheorem{example}[theorem]{Example}

\newtheorem*{thm_hyperbolic_persistent_[G,G]_noncyclic}{Theorem~\ref{hyperbolic_persistent_[G,G]_noncyclic}}
\newtheorem*{lemma_separation}{Lemma~\ref{separation}}
\newtheorem*{lemma_S_K_intersection}{Lemma~\ref{S_K_intersection}}

\newtheorem*{thm_realization}{Theorem~\ref{realization}}

\newtheorem*{thm_non_rigid}{Theorem~\ref{non_rigid}}
\newtheorem*{claim:alpha_non-commute}{Claim~\ref{alpha_non-commute}}

\numberwithin{equation}{section}
\numberwithin{figure}{section}
\numberwithin{table}{section}

\renewcommand{\)}{\textup{)}}

\setcounter{tocdepth}{2} 


\begin{document}
\baselineskip 13pt

\title[Dehn filling and the knot group]{Dehn filling and the knot group I: \\Realization Property}

\author[T.Ito]{Tetsuya Ito}
\address{Department of Mathematics, Kyoto University, Kyoto 606-8502, JAPAN}
\email{tetitoh@math.kyoto-u.ac.jp}

\author[K. Motegi]{Kimihiko Motegi}
\address{Department of Mathematics, Nihon University, 
3-25-40 Sakurajosui, Setagaya-ku, 
Tokyo 156--8550, Japan}
\email{motegi.kimihiko@nihon-u.ac.jp}

\author[M. Teragaito]{Masakazu Teragaito}
\address{Department of Mathematics Education, Hiroshima University, 
1-1-1 Kagamiyama, Higashi-Hiroshima, 739--8524, Japan}
\email{teragai@hiroshima-u.ac.jp}

\subjclass[2020]{Primary: 57M05, Secondary: 57K10; 57K30; 57M07; 20F65}
\keywords{Dehn filling, knot group, Property P, Dehn filling trivialization, slope, stable commutator length, hyperbolic length, Realization Property}
\dedicatory{Dedicated to Cameron McA. Gordon on the occasion of his 80th birthday}

\begin{abstract}    
Given a non-trivial knot $K$ in $S^3$, 
its exterior $E(K)$ admits infinitely many Dehn fillings parametrized by slopes $r \in \mathbb{Q}$, 
each of which yields a closed $3$--manifold $K(r)$ and trivializes some elements in the knot group $G(K) = \pi_1(E(K))$ via the induced homomorphism from $G(K)$ onto $\pi_1(K(r))$.   
For hyperbolic knots $K$, it is known that every non-trivial element of $G(K)$ remains non-trivial for all but finitely many Dehn fillings. 
We address the question: 
Given finitely many slopes $r_1, \dots, r_n$, 
does there exist an element in $G(K)$ such that it becomes trivial after $r_i$--Dehn fillings for these pre-specified slopes $r_1, \dots, r_n$, 
while it remains non-trivial for all other non-trivial Dehn fillings? 
In this article, 
we answer this question in the positive for most hyperbolic knots, including those without exceptional surgeries. 
Furthermore, we show that there are infinitely many such elements up to conjugacy and powers. 

\end{abstract}

\maketitle
\tableofcontents
\section{Introduction}
\label{Introduction}

\subsection{Background}
\label{background}
In \cite{Dehn}, 
Dehn considered the following method for constructing $3$--manifolds: 
remove a solid torus neighborhood $V = N(K)$ of a knot $K$ in the $3$--sphere $S^3$ to obtain the exterior $E(K)$ of $K$, then 
glue the solid torus $V$ to $E(K)$ along their boundaries differently. 
The latter half is called {\em Dehn filling} of $E(K)$. 
The gluings are parametrized by slopes defined below.  
Denote by $G(K)$ the {\em knot group} $\pi_1(E(K))$. 
Let $(\mu, \lambda)$ be a preferred meridian-longitude pair of $K$, 
which generates the {\em peripheral subgroup} $P(K) = i_{*}(\pi_1(\partial E(K))) \subset G(K)$. 
Then every simple closed curve on $\partial E(K)$ (based at a base point of $G(K)$) represents $\mu^p \lambda^q$ for some coprime integers $p$ and $q$; we call $\mu^p \lambda^q$ a {\em slope element} of {\em slope} $p/q$. 
(Two slope elements $\mu^p \lambda^q$ and $\mu^{-p} \lambda^{-q}$ have the same slope $p/q$.)
If the above gluing is performed so that the meridian of $V$ has a slope $p/q$, 
then such a Dehn filling is specified as $p/q$--Dehn filling of $E(K)$. We denote the resulting closed $3$--manifold by $K(p/q)$. 

Geometric aspects of Dehn fillings have been extensively studied by many authors along a slogan
``generically, the topology of $E(K)$ persists in $K(r)$''.
See survey article \cite{Go_ICM,Go_Warsaw} and references therein. 
In \cite{IMT_Magnus} the authors study a group theoretic aspect of Dehn filling. 
The purpose of this article is to make a significant improvement and development of \cite{IMT_Magnus}. 

Each $p/q$--Dehn filling induces a natural epimorphism 
\[
p_r \colon G(K) \to  \pi_1(K(p/q)) = G(K) / \langle\!\langle  \mu^p \lambda^q \rangle\!\rangle, 
\]
where  $\langle\!\langle  \mu^p \lambda^q \rangle\!\rangle$ is the normal closure of $\mu^p \lambda^q$ in $G(K)$. 
Since $\gamma = \mu^p \lambda^q$ and its inverse $\gamma^{-1}$ have the same normal closure, 
$\langle\!\langle \gamma^{-1} \rangle\!\rangle = \langle\!\langle \gamma \rangle\!\rangle$ will be denoted by 
$\langle\!\langle r \rangle\!\rangle$ using their slopes $r = p/q \in \mathbb{Q}$. 
Then $g \in G(K)$ becomes trivial after $r$--Dehn filling, i.e. $p_r(g) = 1 \in \pi_1(K(r))$ if and only if $g \in \langle\!\langle r \rangle\!\rangle$. 

The Property P Conjecture, settled by Kronheimer and Mrowka \cite{KM}, 
asserts that $\pi_1(K(r))$ is non-trivial for all $r \in \mathbb{Q}$. 
Hence, 
$\langle\!\langle r \rangle\!\rangle = \langle\!\langle \infty \rangle\!\rangle = G(K)$ if and only if $r = \infty$.
In \cite[Theorem~1.2]{IMT_Magnus}, using Property P we extend this to the peripheral Magnus property:  
$\langle\!\langle r \rangle\!\rangle = \langle\!\langle r' \rangle\!\rangle$ if and only if $r = r'$, where $r, r' \in \mathbb{Q} \cup \{ \infty \}$. 

\begin{definition}
\label{D}
Let $K$ be a non-trivial knot in $S^3$. 
To each element $g \in G(K)$ assigning a subset 
\begin{align*}
\mathcal{S}_K(g) 
& =  \{ r \in \mathbb{Q} \mid p_r(g) = 1 \in \pi_1(K(r)) \}\\
& = \{ r \in \mathbb{Q} \mid g \in \langle\!\langle r \rangle\!\rangle \}
\end{align*}

we obtain a set valued function 
\[
\mathcal{S}_K \colon G(K) \to 2^{\mathbb{Q}}. 
\]
\end{definition}

\medskip

Since $G(K)$ is countable, 
$\mathcal{S}_K \colon G(K) \to 2^{\mathbb{Q}}$ is not surjective. 
So not every subset appears as $\mathcal{S}_K(g)$. 
More precisely, 
$\mathcal{S}_K(g)$ is known to be finite except when $g = 1$ for hyperbolic knots $K$ \cite{Osin,GM,IchiMT}. 
On the other hand, $|\mathcal{S}_K(g)|$ may be arbitrarily large. 
Indeed \cite[Theorem~1.4]{IMT_Magnus} shows that for any finite subset $\mathcal{R}$, 
there exists a non-trivial element $g$ with $\mathcal{S}_K(g) \supset \mathcal{R}$. 
However, this result says nothing if $g$ remains non-trivial in $\pi_1(K(r))$ for slopes $r \in \mathbb{Q} - \mathcal{R}$. 
Sharpening this, we would like to ask: 

\begin{question}
\label{motivation}
Given a finite subset $\mathcal{R} \subset \mathbb{Q}$, 
is it possible to take an element $g \in G(K)$ so that $\mathcal{S}_K(g)$ is exactly the same as $\mathcal{R}$?  
Furthermore, if it does exist, then how may such elements can we have? 
\end{question}

\begin{example}
\label{|R|=0,1}
\begin{enumerate}
\renewcommand{\labelenumi}{(\arabic{enumi})}
\item
Since the meridian $\mu$ normally generates $G(K)$, 
Property P is equivalent to saying that $p_r(\mu) \ne 1 \in \pi_1(K(r))$ for all $r \in \mathbb{Q}$, 
i.e. $\mathcal{S}_K(\mu) = \emptyset$. 
\item
For any slope element $\gamma$ representing slope $r \in \mathbb{Q}$, 
$\mathcal{S}_K(\gamma) = \{ r \}$ if $K$ has no finite surgery slope; see Proposition~\ref{slope}. 
Here a slope $r \in \mathbb{Q}$ is a {\em finite surgery slope} if $\pi_1(K(r))$ is finite. 
\end{enumerate}
\end{example}

In Example~\ref{|R|=0,1} (2), 
the absence of finite surgery slope cannot be removed.  

\begin{example}[Non realizable finite subset]
\label{pretzel_non_separation}
Let $K$ be the $(-2, 3, 7)$--pretzel knot. 
Take a finite surgery slope $18$. 
Then as shown in \cite[Example 6.2]{IMT_Magnus}, 
$\langle \! \langle \frac{18}{5} \rangle\!\rangle \subset \langle \! \langle 18 \rangle\!\rangle$. 
Hence if $\mathcal{S}_K(g)$ contains the slope $\frac{18}{5}$, 
then it necessarily contains the slope $18$ as well. 
This means that there is no element $g$ with $\mathcal{S}_K(g) = \{ \frac{18}{5} \}$.  
More generally, any finite subset $\mathcal{R}$ which contains $\frac{18}{5}$, but does not contain $18$ cannot be 
realized by $\mathcal{S}_K(g)$. 
\end{example}

This example may be generalized to knots with finite surgery slopes. 
For details, see \cite[Proposition~6.1]{IMT_Magnus}.  

\medskip

A finite surgery slope is a Seifert surgery slope, i.e.  $K(r)$ is a Seifert fiber space due to the Geometrization Theorem \cite{Pe1,Pe2,Pe3}. 
For a finite surgery slope $r$, every element $g \in G(K)$ becomes a torsion element in $\pi_1(K(r))$. 
We say that $r$ is a {\em torsion surgery slope} if $\pi_1(K(r))$ has a torsion element. 
As we will observe later (Lemma~\ref{torsion_slope}), 
a slope $r$ is a torsion surgery slope if and only if it is either a finite surgery slope or a {\it reducing surgery slope} 
meaning that $K(r)$ is reducible.
(In the latter case, since $K$ is non-trivial, 
$K(r)$ should be non-prime \cite{GabaiIII}, 
and following \cite{GLu2} $K(r)$ has a non-trivial lens space summand.)
In fact, the Cabling Conjecture  \cite{GS} asserts that any hyperbolic knot has no reducing surgery slope. 
At the present it is known that a hyperbolic knot has at most two such slopes \cite{GLreducible}. 

\medskip

\subsection{Results}
\label{results}

Recall that for hyperbolic knots $K$, $\mathcal{S}_K(g)$ is always finite for all non-trivial knots $g \in G(K)$.  
Example~\ref{pretzel_non_separation} shows that not every finite subset $\mathcal{R}$ can be realized by  $\mathcal{S}_K(g)$. 
Note that in this example, the complement of $\mathcal{R}$ contains finite (Seifert) surgery slope. 
The theorem below answers the first part of Question~\ref{motivation} in the positive for most hyperbolic knots. 
In the following a finite subset $\mathcal{R}$ may be the emptyset. 

\begin{theorem}[Realization Theorem]
\label{realization}
Let $K$ be a hyperbolic knot and let $\mathcal{R} = \{ r_1, \ldots, r_n\}$ be any finite subset of $\mathbb{Q}$. 
Then we may find an element $g \in [G(K), G(K)] \subset G(K)$ so that  $\mathcal{S}_K(g) = \mathcal{R}$ 
whenever the complement of $\mathcal{R}$ contains neither a Seifert surgery slope nor two reducing surgery slopes.   
\end{theorem}

More precisely, in the case where $\mathcal{R} = \emptyset$, there exists $g \in [G(K), G(K)]$ such that $\mathcal{S}_K(g) = \emptyset$ if and only if 
$K$ has no cyclic surgery; see Theorem~\ref{hyperbolic_persistent_[G,G]_noncyclic}.  

Theorem~\ref{realization} immediately implies: 

\begin{corollary}
\label{no_Seifert}
Let $K$ be a hyperbolic knot which admits neither a Seifert surgery nor two reducing surgeries. 
Then any finite subset $\mathcal{R}$ is realized by $\mathcal{S}_K(g)$ for some element $g$. 
\end{corollary}

\medskip

Thus, for any hyperbolic knot without {\em exceptional} (i.e. non-hyperbolic) surgery, 
every finite subset  is realized by $\mathcal{S}_K(g)$ for some element $g$. 

Such hyperbolic knots are common. 
Indeed, every knot can be converted into such a hyperbolic knot by a single crossing change; 
moreover for a given knot there are infinitely many such crossing changes \cite{MM_crossing}. 

\medskip

\begin{example}
Let $K$ be a knot depicted by Figure~\ref{knot_without_exceptional}, which is the simplest hyperbolic knot  $6_3$ without exceptional surgery \cite{BrittenhamWu}. 
Then every finite subset $\mathcal{R} \subset \mathbb{Q}$ is realized as 
$\mathcal{S}_K(g)$ for some element $g \in G(K)$.
For instance, let $a_n$ be the rational number up to $n$-th decimal place of $\pi$, 
and put 
\[\mathcal{R}_n = \{ 3,\ 3.1,\ 3.14,\ 3.141,\ 3.1415,\; \dots,\; a_n \}.\]
Then for any positive integer $n$ we have an element $g_n$ which satisfies 
$\mathcal{S}_K(g_n) = \mathcal{R}_n$. 
\end{example}
\begin{figure}[htb]
\centering
\includegraphics[width=0.2\textwidth]{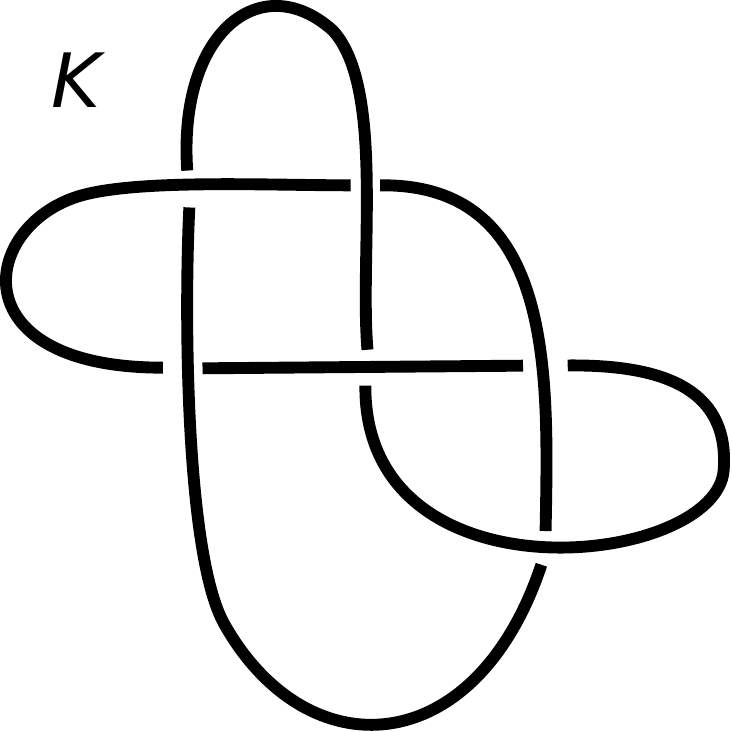}
\caption{The simplest hyperbolic knot (with respect to crossing numbers) $6_3$ without exceptional surgery} 
\label{knot_without_exceptional}
\end{figure}

\medskip

If $\mathcal{R}$ is the set of exceptional surgery slopes, 
then it is a (generically empty) finite subset of $\mathbb{Q}$ \cite{T1,T2},  
and by definition it satisfies the condition of Theorem~\ref{realization}. 
Hence we have 

\begin{corollary}
\label{exceptional_slopes}
Let $K$ be a hyperbolic knot. 
Then the family of the exceptional surgery slopes $\mathcal{R}$ is realized by $\mathcal{S}_K(g)$ for some element $g$. 
\end{corollary}

\medskip

We say that a non-trivial element $g$ is \textit{peripheral} if it is conjugate into the {\em peripheral subgroup} $P(K) = i_*(\pi_1(\partial E(K))) \subset G(K)$.

\begin{remark}
\label{Realization_theorem_non_peripheral}
The element $g$ in Theorem~\ref{realization} and Corollaries~\ref{no_Seifert} and \ref{exceptional_slopes} can be taken as a non-peripheral element of 
$[G(K), G(K)]$; 
see Remark~\ref{realization_non-peripheral}. 
\end{remark}

Let $K$ be a hyperbolic knot without finite surgery. 
Then for any peripheral element $g$, 
$\mathcal{S}_K(g)$ consists of a single slope; see Proposition~\ref{slope}. 
If $\mathcal{S}_K(g)$ consists of a single slope, then is $g$ peripheral? 
Theorem~\ref{realization} (Corollary~\ref{no_Seifert}) and Remark~\ref{Realization_theorem_non_peripheral} show that 
the function $\mathcal{S}_K \colon G(K) \to 2^{\mathbb{Q}}$ does not detect peripherality of elements, 
i.e. there exists a non-peripheral element $g \in G(K)$ for which $\mathcal{S}_K(g)$ consists of a single slope.

The next result implies that the finite subset $\mathcal{R} \subset \mathbb{Q}$ in Theorem~\ref{realization} is realized by infinitely many elements. 

\begin{theorem}
\label{non_rigid}
Let $K$ be a hyperbolic knot without torsion surgery slope. 
For any non-peripheral element $g \in [G(K), G(K)]$ there are infinitely many, mutually non-conjugate elements 
$\alpha_m \in G(K)$ which enjoy the following. 
\begin{enumerate}
\item
$\mathcal{S}_K(\alpha_m) = \mathcal{S}_K(g)$, 
\item
$\alpha_m$ is not conjugate to any powers of $g$. 
\end{enumerate}
\end{theorem}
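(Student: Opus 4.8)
The plan is to produce, for each $m$, an element $\alpha_m$ that is "close to" $g$ in the sense that it lies in the same normal closures $\langle\!\langle r\rangle\!\rangle$ for all $r$, yet is visibly non-conjugate to any power of $g$ by a length or complexity obstruction. The natural source of such elements is the construction already used for Theorem~\ref{same_trivialization}: for $\alpha\in G(K)$ the element $h=g^{g^{\alpha}}g^{-2}$ satisfies $\mathcal{S}_K(h)=\mathcal{S}_K(g)$, because in any quotient in which $g$ dies, $h$ dies as well, and conversely if $h$ becomes trivial then (being a product of three conjugates of $g^{\pm1}$, whose images commute once $g$ is central-ish in the quotient — more precisely one checks $h\mapsto g\cdot g\cdot g^{-2}=1$ only forces nothing, so one instead argues directly that $\langle\!\langle h\rangle\!\rangle=\langle\!\langle g\rangle\!\rangle$ as in the proof of Theorem~\ref{same_trivialization}). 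Taking $\alpha=\alpha$ ranging over a suitable infinite family will be the raw material; the work is to arrange non-conjugacy and non-conjugacy-to-powers-of-$g$.

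The first concrete step is to invoke Theorem~\ref{S_K_hyperbolic}: since $K$ is hyperbolic, $\mathcal{S}_K(g)$ is a finite set, say $\mathcal{R}=\{r_1,\dots,r_n\}$ (possibly empty). By Theorem~\ref{realization} (available because $K$ has no torsion surgery slope and $\overline{\mathcal{R}}$ — being cofinite — contains no Seifert surgery precisely when $\mathcal{R}$ already accounts for all of them; here I would instead just use that $\mathcal{S}_K(g)=\mathcal{R}$ is a legitimate target) the key realized feature is that $\mathcal{R}$ is realizable, and more importantly the construction in that proof yields, for the \emph{same} $\mathcal{R}$, infinitely many mutually non-conjugate realizing elements. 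So the cleanest route is: apply the Realization Theorem's construction directly with the set $\mathcal{R}=\mathcal{S}_K(g)$ to obtain infinitely many mutually non-conjugate $\alpha_m\in[G(K),G(K)]$ with $\mathcal{S}_K(\alpha_m)=\mathcal{R}=\mathcal{S}_K(g)$. The remaining point — that $\alpha_m$ is not conjugate to any power $g^k$ of $g$ — is then handled by a counting/growth argument: the realizing elements $\alpha_m$ in that construction have stable commutator length (or translation length in the hyperbolic structure on $E(K)$, or word length) growing with $m$, so only finitely many of them can be conjugate to a fixed power $g^k$; discarding those finitely many $k$-collisions — and noting the powers $g^k$ for $|k|$ large are themselves geometrically large while the $\alpha_m$ are controlled, or vice versa — leaves an infinite sub-family with the desired property. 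Non-peripherality of $g$ and of the $\alpha_m$ is used to guarantee the relevant invariants are well-behaved (e.g. $g$ has a well-defined positive translation length, $\mathrm{scl}(g)>0$ on $[G(K),G(K)]$, etc.).

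So the skeleton is: (1) $\mathcal{S}_K(g)=\mathcal{R}$ is finite and, since $\mathcal{R}$ contains the full set of Seifert (indeed exceptional) slopes forced to be in it, its complement contains no Seifert surgery, so Theorem~\ref{realization} applies to $\mathcal{R}$; (2) extract from that proof infinitely many mutually non-conjugate $\alpha_m\in[G(K),G(K)]$ with $\mathcal{S}_K(\alpha_m)=\mathcal{R}$, arranging an unbounded geometric invariant $\nu(\alpha_m)\to\infty$; (3) since $g$ is non-peripheral, $\{\,g^k : k\in\mathbb{Z}\,\}$ meets each bounded $\nu$-range in finitely many conjugacy classes, so for each $k$ only finitely many $\alpha_m$ are conjugate to $g^k$, and a diagonal argument prunes the family to an infinite subfamily no member of which is conjugate to any power of $g$; each $\alpha_m$ is manifestly non-peripheral since it lies in $[G(K),G(K)]$ with $\nu(\alpha_m)$ large.

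The main obstacle I expect is step (2)–(3): decoupling "$\mathcal{S}_K(\alpha_m)=\mathcal{S}_K(g)$" from "$\alpha_m$ conjugate to $g^k$". Forcing $\mathcal{S}_K(\alpha_m)$ to equal a prescribed finite set pins $\alpha_m$ down quite rigidly (it must survive \emph{all} fillings outside $\mathcal{R}$, which is a strong constraint), so one must verify that the realization construction genuinely has enough freedom to move $\alpha_m$ away from the cyclic subgroup $\langle g\rangle$ — typically by showing the construction can absorb an extra "independent" conjugating element or commutator whose effect is detectable by $\mathrm{scl}$ or hyperbolic length but invisible to every Dehn filling trivialization. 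Pinning down the right invariant $\nu$ (stable commutator length is the most robust candidate, since it is a conjugacy invariant, homogeneous so $\nu(g^k)=|k|\,\nu(g)$, and the paper already advertises it in the abstract) and checking it is unbounded on the realizing family is where the real content lies.
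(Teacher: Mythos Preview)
Your proposal has two genuine gaps.

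First, the appeal to Theorem~\ref{realization} is not justified. That theorem requires that the complement of $\mathcal{R}$ contain no Seifert surgery slope. For an arbitrary non-peripheral $g\in[G(K),G(K)]$ there is no reason $\mathcal{S}_K(g)$ contains every Seifert surgery slope of $K$: if $r_0$ is a Seifert surgery slope with infinite fundamental group (so $r_0$ is not a torsion slope and is not excluded by hypothesis), then $\langle\!\langle r_0\rangle\!\rangle$ is a proper normal subgroup of $G(K)$, and any $g\notin\langle\!\langle r_0\rangle\!\rangle$ has $r_0\notin\mathcal{S}_K(g)$. Your parenthetical ``since $\mathcal{R}$ contains the full set of Seifert slopes forced to be in it'' is simply false in general, and your fallback ``$\mathcal{S}_K(g)=\mathcal{R}$ is a legitimate target'' is circular: it tells you $\mathcal{R}$ is realized by $g$, not that the Realization construction can produce \emph{other} realizers.

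Second, even granting an infinite mutually non-conjugate family $\alpha_m$ with $\nu(\alpha_m)\to\infty$ for some conjugacy invariant $\nu$ satisfying $\nu(g^k)=|k|\,\nu(g)$, your pruning argument does not rule out that every $\alpha_m$ is conjugate to some power $g^{k_m}$ with $k_m\to\infty$. Mutual non-conjugacy of the $\alpha_m$ forces the $k_m$ to be distinct, but that is no obstruction. Nothing prevents $\nu(\alpha_m)$ from landing in $\nu(g)\cdot\mathbb{Z}$ for every $m$.

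The paper's proof avoids both issues by working directly with the Product Theorem (Theorem~\ref{S_K_intersection}) rather than the Realization Theorem. It takes $h=sgs^{-1}$ for a suitable slope element $s$, so that $\mathcal{S}_K(h)=\mathcal{S}_K(g)$ trivially, and sets $\alpha_m=g^{p+pm-1}h^{-p+1}$; Theorem~\ref{S_K_intersection} then gives $\mathcal{S}_K(\alpha_m)=\mathcal{S}_K(g)\cap\mathcal{S}_K(h)=\mathcal{S}_K(g)$ for large $m$, with no Seifert-slope hypothesis needed on $\mathcal{S}_K(g)$. For non-conjugacy to powers, the paper combines two invariants: an $\mathrm{scl}$ computation in $\pi_1(K(s))$ (for a hyperbolic $s\notin\mathcal{S}_K(g)$) pins down the putative exponent $k$ to $\pm pm$, and then a finite quotient $\varphi\colon G(K)\to F$ chosen so that $\varphi(g^p)=1$ but $\varphi([g,s])\neq1$ gives $\varphi(\alpha_m)=\varphi(g^{-1}sgs^{-1})\neq1$ while $\varphi(g^{\pm pm})=1$, a contradiction. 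This second invariant is exactly what your growth argument is missing.
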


This theorem answers the second part of Question~\ref{motivation} for hyperbolic knots without torsion surgery slope. 

\begin{remark}
\label{Realization_infinite}
Applying Theorem~\ref{non_rigid} to the element $g$ in Theorem~\ref{realization} \(with Remark~\ref{Realization_theorem_non_peripheral}\), 
we obtain infinitely many, mutually non-conjugate elements 
$\alpha_m$ satisfying $(1)$ $\mathcal{S}_K(\alpha_m) = \mathcal{S}_K(g) = \mathcal{R}$, and $(2)$ $\alpha_m$ is not conjugate to any powers of $g$. 
\end{remark}

\medskip

Suppose that $K$ is a hyperbolic knot. 
Then since  $\mathcal{S}_K(g)$ is finite for all non-trivial elements $g$, 
we see that $\displaystyle \bigcap_{r \in \mathcal{R}}   \langle\!\langle r \rangle\!\rangle = \{ 1 \}$ whenever $\mathcal{R}$ is an ``infinite'' subset of $\mathbb{Q}$. 
On the other hand, 
Corollary~\ref{no_Seifert} has the following paraphrase. 

\begin{corollary}[Structure of normal closures of slopes]
\label{realization_knot_group}
Let $K$ be a hyperbolic knot without Seifert surgery nor reducing surgery. 
Let $\mathcal{R}$ be any finite, non-empty subset of $\mathbb{Q}$. 
Then there exists a non-trivial element $g \in G(K)$ such that 
\[
g \in \bigcap_{r \in \mathcal{R}}   \langle\!\langle r \rangle\!\rangle,\quad \textrm{but}\quad g \not\in \bigcup_{s \in \mathbb{Q} - \mathcal{R}} \langle\!\langle s \rangle\!\rangle.
\]
\end{corollary}

\medskip

\subsection{Organization of the paper}
\label{organization}

In Section~\ref{function_S_K} we will collect elementary properties of the function $\mathcal{S}_K \colon G(K) \to 2^{\mathbb{Q}}$, 
and then in Section~\ref{normal_closure} we will prepare a useful result about the normal closure of slope elements (Proposition~\ref{slope}). 

In the proof of Theorem~\ref{realization} (Realization Theorem), 
as the first step, 
we need to realize the emptyset by $\mathcal{S}_K(g)$ for some element $g \in [G(K), G(K)]$. 
(Note that the meridian $\mu$ satisfies $\mathcal{S}_K(\mu) = \emptyset$, however $\mu \not\in [G(K), G(K)]$.)
Actually, more precisely, 
we will establish the following. 
A slope $r$ is called a {\it cyclic surgery slope} if $\pi_1(K(r))$ is cyclic. 

\begin{theorem}[Elements with $\mathcal{S}_K(g) = \emptyset$]
\label{hyperbolic_persistent_[G,G]_noncyclic}
Let $K$ be a hyperbolic knot in $S^{3}$.  
Then there exist infinitely many, mutually non-conjugate elements $g \in [G(K),G(K)]$ 
such that $p_s(g) \neq 1$ in $\pi_1(K(s))$ for all non-cyclic surgery slopes $s \in \mathbb{Q}$. 
In particular, 
if $K$ has no cyclic surgery slope, then there exist infinitely many, mutually non-conjugate elements $g \in [G(K),G(K)]$ such that 
$\mathcal{S}_K(g) = \emptyset$. 
\end{theorem}

\begin{remark}
\label{persistent_homologically_trivial_not_meridian}
In Theorem~\ref{hyperbolic_persistent_[G,G]_noncyclic}, 
since $g \in [G(K), G(K)]$, i.e. $g$ is homologically trivial, 
it is not conjugate to any power of the meridian of $K$. 
\end{remark} 

If $K$ has a cyclic surgery slope $r$, 
then for any element $g$ in $[G(K),G(K)]$, 
we have $p_r(g) = 1$. 
Theorem~\ref{hyperbolic_persistent_[G,G]_noncyclic} shows that admitting cyclic surgery is the only obstruction for $G(K)$ to have an element $g$ with $\mathcal{S}_K(g) = \emptyset$ that belongs to $[G(K),G(K)]$. 
Following the cyclic surgery theorem \cite{CGLS}, every hyperbolic knot admits at most two non-trivial cyclic surgery slopes. 
Theorem~\ref{hyperbolic_persistent_[G,G]_noncyclic} will be also used in the first step of the proof of Lemma~\ref{separation} (Separation Lemma) below.

Section~\ref{homologically0} is devoted to a proof of Theorem~\ref{hyperbolic_persistent_[G,G]_noncyclic}. 
The proof requires elementary hyperbolic geometry and stable commutator length, 
so we will briefly recall some definitions and prepare useful results in Section~\ref{scl}.
Then in Section~\ref{finite_separation} we proceed to prove the ``Separation Lemma'' below, which is the key step to establish Theorem~\ref{realization}. 

\begin{lemma}[Separation Lemma]
\label{separation}
Let $K$ be a hyperbolic knot.    
Let $\mathcal{R} = \{ r_1, \dots, r_n \}$ and $\mathcal{S} = \{ s_1, \dots, s_m \}$ be any finite, non-empty subsets of $\mathbb{Q}$ such that 
$\mathcal{R} \cap \mathcal{S} = \emptyset$. 
Assume that $\mathcal{S}$ does not contain a Seifert surgery slope.  
Then there exists an element $g \in [G(K), G(K)] \subset G(K)$ 
such that $\mathcal{R} \subset \mathcal{S}_K(g) \subset \mathbb{Q} - \mathcal{S}$. 
\end{lemma}

To bridge the gap between Lemma~\ref{separation} (Separation Lemma) and Theorem~\ref{realization} (Realization Theorem), 
we apply the following lemma, which will be proved in Section~\ref{shrink}. 

\begin{lemma}[Shrinking Lemma]
\label{S_K_intersection}
Let $K$ be a hyperbolic knot. 
Let $g$ be a non-peripheral element and $h$ a non-trivial element in $[G(K),G(K)]$. 
Assume that $\mathbb{Q} - \mathcal{S}_K(g)$ contains neither a finite surgery slope nor two reducing surgery slopes. 
Then there are infinitely many integers $n$ with a constant $N_n > 0$ such that  
\[
\mathcal{S}_K(g) \cap \mathcal{S}_K(h) = \mathcal{S}_K(g^mh^n) 
\] 
for infinitely many integers $m \ge N_n$ for each $n$.
\end{lemma}

\begin{remark}
\label{shrink_torsion_free}
In Lemma~\ref{S_K_intersection}, 
if $K$ has no torsion surgery, 
then skipping the discussion for torsion elements in its proof, 
we may take $n$ as any non-zero integer and a constant $N_n > 0$ so that 
\[
\mathcal{S}_K(g) \cap \mathcal{S}_K(h) = \mathcal{S}_K(g^mh^n) 
\] 
for all integers $m \ge N_n$. 
\end{remark}

Lemma~\ref{S_K_intersection}, together with Remark~\ref{shrink_torsion_free}, 
also plays a key role in the proof of Theorem~\ref{non_rigid} (Section~\ref{identical trivialization}). 

\medskip

\section{Elementary properties of the function $\mathcal{S}_K$}
\label{function_S_K}
Let us collect some elementary properties of the function $\mathcal{S}_K \colon G(K) \to 2^{\mathbb{Q}}$. 

Since $p_r(a^{-1}ga) = 1$ if and only if $p_r(g) = 1$, and 
$p_r(g^{-1}) = 1$ if and only if $p_r(g)=1$, 
we have 

\begin{proposition}
\label{S_K_class_function}
$\mathcal{S}_K \colon G(K) \to 2^{\mathbb{Q}}$ is a class function, 
namely $\mathcal{S}_K(a^{-1}ga) = \mathcal{S}_K(g)$ for all $g$ and $a$ in $G(K)$. 
Furthermore, 
$\mathcal{S}_K(g^{-1}) = \mathcal{S}_K(g)$. 
\end{proposition}

Since $hg = h(gh)h^{-1}$, 
Proposition~\ref{S_K_class_function} immediately implies: 

\begin{proposition}
\label{S_K_commute}
$\mathcal{S}_K(gh) = \mathcal{S}_K(hg)$. 
\end{proposition}

When $K$ is a prime knot, any automorphism $\phi\colon G(K) \rightarrow G(K)$ is induced by a homeomorphism $f$ of $E(K)$ \cite{Tsau2}, 
so either $\phi(\langle\! \langle r \rangle\! \rangle) = \langle\! \langle r \rangle\! \rangle$ or $\phi(\langle\! \langle r \rangle\! \rangle) = \langle\! \langle -r \rangle\! \rangle$. 
The latter case happens only if $K$ is amphicheiral and $f$ reverses the orientation of $E(K)$. 
Since $g \in \langle\! \langle r \rangle\! \rangle$ if and only if $\phi(g) \in \phi(\langle\! \langle r \rangle\! \rangle)$, 
$\mathcal{S}_K(g)$ is almost $\mathrm{Aut}(G(K))$-invariant in the following sense.

\begin{proposition}
If $K$ is prime, for an automorphism $\phi\colon G(K) \rightarrow G(K)$, 
$\mathcal{S}_K(\phi(g)) = \pm \mathcal{S}_K(g)$. Furthermore, if $K$ is not amphicheiral then $\mathcal{S}_K(\phi(g)) = \mathcal{S}_K(g)$. 
Here for $\mathcal{S} =\{s_1,s_2,\ldots \} \subset \mathbb{Q}$, we put $\pm \mathcal{S}=\{\pm s_1, \pm s_2,\ldots,\}$. 
\end{proposition}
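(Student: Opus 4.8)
The proof proposal for the final Proposition is essentially a matter of unwinding the definitions, using the class-function property (Proposition~\ref{S_K_class_function}) and the structural facts about automorphisms of prime knot groups that are recalled immediately before the statement.

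First I would recall the input: by Tsau's theorem \cite{Tsau2}, any automorphism $\phi\colon G(K)\to G(K)$ of the group of a prime knot is induced by a homeomorphism $f\colon E(K)\to E(K)$. Such an $f$ either preserves or reverses the orientation of $E(K)$; in either case it carries the peripheral subgroup $P(K)$ to itself and sends a slope element of slope $r$ to a slope element of slope $\pm r$ (with the $-r$ case only possible when $f$ is orientation-reversing, which forces $K$ to be amphicheiral). Since $\phi$ is induced by $f$, it follows that $\phi(\langle\!\langle r\rangle\!\rangle)=\langle\!\langle r\rangle\!\rangle$ or $\phi(\langle\!\langle r\rangle\!\rangle)=\langle\!\langle -r\rangle\!\rangle$, exactly as stated in the paragraph preceding the Proposition.

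The main step is then the following chain of equivalences, for a fixed slope $r\in\mathbb{Q}$ and a fixed $g\in G(K)$:
\[
r\in\mathcal{S}_K(\phi(g))
\iff \phi(g)\in\langle\!\langle r\rangle\!\rangle
\iff g\in\phi^{-1}(\langle\!\langle r\rangle\!\rangle)
\iff g\in\langle\!\langle \pm r\rangle\!\rangle
\iff \pm r\in\mathcal{S}_K(g).
\]
Here the middle equivalence uses that $\phi$ is a bijection, and the penultimate one uses $\phi^{-1}(\langle\!\langle r\rangle\!\rangle)=\langle\!\langle r\rangle\!\rangle$ or $\langle\!\langle -r\rangle\!\rangle$ (applying the displayed dichotomy to $\phi^{-1}$, which is again induced by a homeomorphism, or equivalently observing that $\phi$ permutes the set $\{\langle\!\langle r\rangle\!\rangle,\langle\!\langle -r\rangle\!\rangle\}$). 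Taking the union over all $r$ yields $\mathcal{S}_K(\phi(g))=\pm\,\mathcal{S}_K(g)$, where one must be slightly careful: the sign is uniform, i.e. it is governed by whether $f$ is orientation-preserving or orientation-reversing, not chosen $r$ by $r$. So the cleaner formulation is: if $f$ is orientation-preserving then $\mathcal{S}_K(\phi(g))=\mathcal{S}_K(g)$, and if $f$ is orientation-reversing then $\mathcal{S}_K(\phi(g))=-\mathcal{S}_K(g)$; in both cases $\mathcal{S}_K(\phi(g))=\pm\mathcal{S}_K(g)$ as sets in the stated sense. Finally, if $K$ is not amphicheiral then no orientation-reversing self-homeomorphism of $E(K)$ exists, so only $\phi(\langle\!\langle r\rangle\!\rangle)=\langle\!\langle r\rangle\!\rangle$ can occur, giving $\mathcal{S}_K(\phi(g))=\mathcal{S}_K(g)$.

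There is no serious obstacle here; the only subtlety worth stating carefully is that the sign ambiguity is a single global choice attached to $\phi$ (whether its geometric realization respects orientation) rather than something that could fluctuate across different slopes — this is what makes "$\pm\mathcal{S}_K(g)$" the right statement and prevents, say, a mixture like $\{r_1,-r_2,\dots\}$. I would also note in passing that $\infty$ is fixed by every such automorphism (since $\langle\!\langle\infty\rangle\!\rangle=G(K)$ is characteristic), so restricting attention to $\mathbb{Q}$ as in the definition of $\mathcal{S}_K$ causes no issue, and that Proposition~\ref{S_K_class_function} is exactly the special case where $\phi$ is an inner automorphism (induced by a homeomorphism isotopic to the identity, which is orientation-preserving).
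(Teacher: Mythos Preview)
Your proof is correct and follows essentially the same approach as the paper, which gives the entire argument in the paragraph immediately preceding the Proposition: the key point is that $g \in \langle\!\langle r\rangle\!\rangle$ if and only if $\phi(g)\in\phi(\langle\!\langle r\rangle\!\rangle)=\langle\!\langle \pm r\rangle\!\rangle$, with the sign determined globally by whether the homeomorphism realizing $\phi$ preserves or reverses orientation. Your write-up is in fact more detailed and more careful about the uniformity of the sign than the paper's one-sentence justification.
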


By definition, 
$\mathcal{S}_K$ also satisfies the following. 

\begin{proposition}\
\label{cup_cap}
\begin{enumerate}
\renewcommand{\labelenumi}{(\arabic{enumi})}
\item
$\mathcal{S}_K(g) \cap \mathcal{S}_K(h) \subset \mathcal{S}_K(gh)$. 

\item
$\mathcal{S}_K(g) \cup \mathcal{S}_K(h) \subset \mathcal{S}_K([g, h])$.
\end{enumerate}
\end{proposition}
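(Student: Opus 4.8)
The plan is to prove both inclusions directly from the definition of $\mathcal{S}_K$, using the fact that each $\langle\!\langle r \rangle\!\rangle$ is a normal subgroup of $G(K)$.

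For part (1): suppose $r \in \mathcal{S}_K(g) \cap \mathcal{S}_K(h)$. By Definition~\ref{D} this means $g \in \langle\!\langle r \rangle\!\rangle$ and $h \in \langle\!\langle r \rangle\!\rangle$. Since $\langle\!\langle r \rangle\!\rangle$ is a subgroup, the product $gh$ lies in $\langle\!\langle r \rangle\!\rangle$, so $r \in \mathcal{S}_K(gh)$. (Equivalently, apply the homomorphism $p_r$: if $p_r(g) = 1$ and $p_r(h) = 1$ in $\pi_1(K(r))$, then $p_r(gh) = p_r(g)p_r(h) = 1$.) This gives $\mathcal{S}_K(g) \cap \mathcal{S}_K(h) \subset \mathcal{S}_K(gh)$.

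For part (2): suppose $r \in \mathcal{S}_K(g) \cup \mathcal{S}_K(h)$, say $r \in \mathcal{S}_K(g)$, i.e. $p_r(g) = 1$ in $\pi_1(K(r))$. Then $p_r([g,h]) = p_r(g^{-1}h^{-1}gh) = [p_r(g), p_r(h)] = [1, p_r(h)] = 1$, so $r \in \mathcal{S}_K([g,h])$; the case $r \in \mathcal{S}_K(h)$ is symmetric since $[g,h]$ becomes trivial as soon as either entry does. Hence $\mathcal{S}_K(g) \cup \mathcal{S}_K(h) \subset \mathcal{S}_K([g,h])$.

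There is essentially no obstacle here; the statement is a formal consequence of $p_r$ being a group homomorphism (equivalently, of $\langle\!\langle r \rangle\!\rangle$ being a normal subgroup). The only point worth a remark is that the reverse inclusions fail in general --- for instance $\mathcal{S}_K([g,g^{-1}]) = \mathcal{S}_K(1) = \mathbb{Q}$ while $\mathcal{S}_K(g)$ may be a proper subset --- which is precisely why Theorem~\ref{S_K_intersection} (the Product theorem) is needed later to obtain an equality in a controlled situation.
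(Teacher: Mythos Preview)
Your proof is correct and follows essentially the same approach as the paper: both parts are immediate from the fact that $p_r$ is a group homomorphism. The only cosmetic difference is the commutator convention (you write $[g,h]=g^{-1}h^{-1}gh$ while the paper uses $ghg^{-1}h^{-1}$), which of course does not affect the argument.
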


\begin{proof}
(1) Let us take $r \in \mathcal{S}_K(g) \cap \mathcal{S}_K(h)$. 
Then $p_r(g) = p_r(h) = 1$, and thus $p_r(gh) = p_r(g)p_r(h) = 1$, 
which shows $r \in \mathcal{S}_K(gh)$. 

(2) If $r \in \mathcal{S}_K(g) \cup \mathcal{S}_K(h)$, then $p_r(g) = 1$ or $p_r(h) = 1$. 
This implies $p_r([g, h]) = p_r(g)p_r(h)p_r(g)^{-1}p_r(h)^{-1} = 1$, 
hence $r \in \mathcal{S}_K([g, h])$. 
\end{proof}

Now we describe a relation between $\mathcal{S}_K(g)$ and $\mathcal{S}_K(g^n)$. 

\begin{proposition}\
\label{S_K(g)_S_K(g^n)}
\[
\mathcal{S}_K(g) \subset \mathcal{S}_K(g^n) \subset \mathcal{S}_K(g) \cup \{ \text{torsion surgery slopes} \}
\]
for any integer $n \neq 0$. 
\end{proposition}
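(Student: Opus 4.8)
The first inclusion $\mathcal{S}_K(g) \subset \mathcal{S}_K(g^n)$ is immediate: if $p_r(g) = 1$ in $\pi_1(K(r))$, then $p_r(g^n) = p_r(g)^n = 1$, so $r \in \mathcal{S}_K(g^n)$. This holds for every $r$ and every $n$, with no hypothesis on $K$ or $g$.

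For the second inclusion, the plan is to fix $r \in \mathcal{S}_K(g^n) \setminus \mathcal{S}_K(g)$ and show $r$ is a torsion surgery slope. By definition $p_r(g^n) = 1$ but $p_r(g) \neq 1$ in $\pi_1(K(r))$. Hence $p_r(g)$ is a non-trivial element of $\pi_1(K(r))$ whose $n$-th power is trivial, i.e.\ $p_r(g)$ has finite order dividing $n$ (in particular order $> 1$). Thus $\pi_1(K(r))$ has non-trivial torsion. By Lemma~\ref{torsion_slope} (the characterization of torsion surgery slopes referenced in the introduction), a slope $r$ has $\pi_1(K(r))$ containing non-trivial torsion precisely when $r$ is a torsion surgery slope, i.e.\ $r$ is either a finite surgery slope or a reducing surgery slope. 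Therefore $r$ lies in the set of torsion surgery slopes, which gives $\mathcal{S}_K(g^n) \subset \mathcal{S}_K(g) \cup \{\text{torsion surgery slopes}\}$ and completes the proof.

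The only non-elementary input here is Lemma~\ref{torsion_slope}, which identifies ``$\pi_1(K(r))$ has non-trivial torsion'' with ``$r$ is a torsion surgery slope''; this is presumably established earlier using the Geometrization theorem together with the fact that irreducible $3$--manifolds with infinite fundamental group are aspherical (so their groups are torsion-free), so the only way to get torsion in $\pi_1(K(r))$ is either a finite (hence Seifert) piece or a reducible manifold with a finite free factor. I anticipate no real obstacle: the argument is a one-line group-theoretic observation ($p_r(g)$ is torsion of order $>1$) combined with a cited structural lemma. The one point to be slightly careful about is that $r = \infty$ is excluded because $\mathcal{S}_K$ takes values in $2^{\mathbb{Q}}$, so we never need to worry about the trivial filling.
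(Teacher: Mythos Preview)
Your proof is correct and matches the paper's argument essentially line for line: the first inclusion is immediate, and for the second you observe that $p_r(g)$ is a nontrivial element of finite order in $\pi_1(K(r))$, hence $r$ is a torsion surgery slope. One minor remark: you do not actually need to invoke Lemma~\ref{torsion_slope} here, since in this paper a \emph{torsion surgery slope} is by definition a slope $r$ for which $\pi_1(K(r))$ has nontrivial torsion; the lemma only serves to identify such slopes with finite or reducing surgery slopes, which is not required for the statement as written.
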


\begin{proof}
Note that $G(K)$ has no torsion, $g^n \ne 1 \in G(K)$. 
If $p_r(g) =1$, then obviously $p_r(g^n) =1$, which shows $\mathcal{S}_K(g) \subset \mathcal{S}_K(g^n)$. 
Assume that $p_r(g^n) = p_r(g)^n =1$. 
If $p_r(g) = 1$, then $r \in \mathcal{S}_K(g)$, otherwise $p_r(g)$ is a torsion element in $\pi_1(K(r))$. 
Thus $r$ is a torsion surgery slope. 
\end{proof}

Note that torsion surgeries are classified as follows. 

\begin{lemma}
\label{torsion_slope}
Let $K$ be a non-trivial knot. 
A slope $r$ is a torsion surgery slope if and only if it is either a finite surgery slope or a reducing surgery slope.
\end{lemma}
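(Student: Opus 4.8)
The plan is to characterize when $\pi_1(K(r))$ has a non-trivial torsion element by appealing to the Geometrization theorem, the sphere theorem, and the structure of Seifert fibered spaces. First I would recall that by the Poincaré associated arguments, $\pi_1(K(r))$ is the fundamental group of a closed orientable $3$--manifold, so its prime decomposition gives $K(r) = M_1 \mathbin{\#} \cdots \mathbin{\#} M_k$ with $\pi_1(K(r)) = \pi_1(M_1) * \cdots * \pi_1(M_k)$. Since a non-trivial knot has Property~P (Theorem~\ref{propertyP}), no summand is simply connected when $r\neq\infty$, so none of the $M_i$ is a fake $S^3$; moreover at most one summand is a homotopy sphere and it must be $S^3$, hence we may assume each $M_i$ is irreducible with non-trivial fundamental group (if $K(r)$ is itself irreducible, $k=1$).

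Next I would invoke the torsion theorem of Kneser–Milnor type: a free product has non-trivial torsion if and only if one of the free factors does, so it suffices to understand when an irreducible summand $M_i$ has torsion in $\pi_1$. For an irreducible orientable closed $3$--manifold with infinite fundamental group, the manifold is aspherical (its universal cover is $\mathbb{R}^3$ by Geometrization, being either hyperbolic, Seifert with infinite $\pi_1$, or containing an essential torus, all of which have contractible universal cover), and an aspherical manifold has torsion-free fundamental group. Thus an irreducible summand contributes torsion precisely when $\pi_1(M_i)$ is finite. Consequently $\pi_1(K(r))$ has non-trivial torsion if and only if either some prime summand has finite non-trivial $\pi_1$, or $K(r)$ is reducible — and in the latter case, a splitting $2$--sphere together with a non-trivial summand already shows $K(r)$ is a reducing surgery. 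Conversely, if $r$ is a finite surgery slope then $\pi_1(K(r))$ is a non-trivial finite group, hence has torsion, and if $r$ is a reducing surgery slope then $K(r) = M_1 \mathbin{\#} M_2$ with both $\pi_1(M_i)$ non-trivial; since $M_i$ are closed orientable $3$--manifolds with non-trivial $\pi_1$, and a free product of two non-trivial groups always contains torsion unless... — here I need the sharper fact.

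The step I expect to be the main obstacle is pinning down that a reducing surgery on a non-trivial knot always yields torsion in $\pi_1$. This is not automatic from free-product theory alone (e.g.\ $\mathbb{Z}*\mathbb{Z}$ is torsion-free), so I would use the additional input, due to Gabai's solution of the Poincaré-related conjectures for knot complements together with the fact that $E(K)$ is irreducible and $\partial$-irreducible: by Gordon–Luecke and the work on reducible surgeries, if $K(r)$ is reducible then $K(r) = S^3_{\text{lens}} \mathbin{\#} M$ or more precisely one summand is a lens space (a standard consequence of the structure of reducible Dehn surgeries, since the knot is non-trivial and the reducing slope is a non-integral or cabling slope producing a lens-space summand). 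A lens space has finite cyclic $\pi_1$, which furnishes the torsion. Making this last implication clean — citing the correct reducible-surgery structure result rather than reproving it — is the delicate point; everything else is the routine aspherical-implies-torsion-free dichotomy from Geometrization. I would organize the write-up as: (i) torsion $\Leftrightarrow$ some prime summand with finite $\pi_1$ (Geometrization + asphericity); (ii) finite surgery $\Rightarrow$ torsion (immediate); (iii) reducing surgery $\Rightarrow$ a lens-space summand $\Rightarrow$ torsion (cited structure of reducible surgeries); (iv) conversely, a prime summand with finite non-trivial $\pi_1$ forces $r$ to be finite or reducing depending on whether $K(r)$ is prime.
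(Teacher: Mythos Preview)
Your plan is correct and follows essentially the same route as the paper. The forward implication is the standard fact that an irreducible closed orientable $3$--manifold with infinite $\pi_1$ is aspherical and hence torsion-free (the paper cites \cite[Lemma~9.4]{Hem} directly rather than rederiving it via Geometrization), and for the reverse implication you have correctly identified both the obstacle and its resolution: the paper rules out $K(r)=S^2\times S^1$ via Gabai \cite{GabaiII} so that $K(r)$ is genuinely non-prime, and then invokes Gordon--Luecke \cite{GLu2} to obtain a lens space summand, exactly as you anticipated.
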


\begin{proof}
Assume that a $3$--manifold $M$ has the fundamental group with non-trivial torsion. 
Then $M$ is spherical (i.e. $\pi_1(M)$ is finite) or $M$ is non-prime (\cite[Lemma~9.4]{Hem}), 
and thus $r$ is either a finite surgery slope or a reducing surgery slope. 
Conversely, if $\pi_1(M)$ is finite, then every non-trivial element is a torsion element, 
and if $\pi_1(M)$ is reducible,
since $K$ is non-trivial, it is non-prime \cite{GabaiII}. 
Furthermore, following \cite{GLu2} $M$ has a lens space summand. 
Hence $\pi_1(M)$ has a torsion element. 
\end{proof}

The cabling conjecture \cite{GS}  of Gonz\'{a}lez-Acu\~na and Short asserts that a reducible $3$--manifold is obtained by Dehn surgery on $K$ only when 
$K$ is a cable knot and the surgery slope is the cabling slope. 
In particular, a hyperbolic knot 
is expected to have no reducing surgery.
So for hyperbolic knots, a torsion surgery may coincide with a finite surgery; 
moreover, any hyperbolic knot admits at most two finite surgeries except for the $(-2, 3, 7)$--pretzel knot, which has exactly three such surgeries \cite[Theorem~1.4]{Ni-Zhang-Finite}. 
Heegaard Floer theory puts strong constraints for $K$ having finite surgeries; 
$K$ must be an L-space knot \cite{OS_lens}, 
consequently $K$ is fibered \cite{Ni} and its Alexander polynomial has very restrictive form \cite{OS_lens}. 

\medskip

\section{Trivialization of slope elements}
\label{normal_closure}

In this section we observe the following proposition, which generalizes \cite[Proposition~2.3]{IMT_Magnus}. 
Their proofs are almost identical, but for completeness we give a proof here. 

\begin{proposition}
\label{slope}
Let $K$ ba a knot in $S^3$ and $\gamma$ a slope element of slope $r$ in $G(K)$. 
Suppose that $\gamma^n$ becomes trivial after $s$--Dehn filling for some integer $n \ne 0$.  
Then $s = r$ or $s$ is a finite surgery slope. 
\end{proposition}

\begin{proof}
Without loss of generality we may assume $n > 0$, and put $r = a/b$ and $s = p/q$.
Our assumption implies 
$\gamma^n = (\mu^a  \lambda^b)^{n} \in \langle\!\langle s \rangle\!\rangle$. 
Then
\[
\mu^{(aq-bp)n} 
= \big((\mu^a  \lambda^b)^{n}\big)^q (\mu^p  \lambda^q)^{-bn} 
\in  \langle\!\langle s \rangle\!\rangle. 
\]
If $aq-bp = 0$, then $p/q = a/b$, i.e. $s = r$. 
So we assume that $aq-bp \ne 0$; 
without loss of generality we may further assume $aq-bp > 0$.  

If $(aq-bp)n = 1$, 
then $\mu \in \langle\!\langle s \rangle\!\rangle$ and 
$p_s(\mu) = 1$ in $\pi_1(K(s))$. 
This cannot happen by Example~\ref{|R|=0,1} (1).

So in the following we assume $m = (aq-bp)n \ge 2$. 
Since $\mu^m \in \langle\!\langle s \rangle\!\rangle$, 
$\langle\!\langle \mu^m \rangle\!\rangle \subset \langle\!\langle s \rangle\!\rangle$  
and we have the canonical epimorphism 
$\varphi\colon G(K) / \langle\!\langle \mu^m \rangle\!\rangle \to G(K) / \langle\!\langle s \rangle\!\rangle$. 
Note that $(\varphi(\mu))^m = \varphi(\mu^m) = 1$ in $G(K) / \langle\!\langle s \rangle\!\rangle$. 
If $\varphi(\mu) = 1 \in G(K) / \langle\!\langle s \rangle\!\rangle$,  
then $\varphi(\mu) = \mu \in  \langle\!\langle s \rangle\!\rangle$ and as above $s = \infty$. 
Thus we may assume $\varphi(\mu) \ne 1$, 
i.e. $\mu$ is a non-trivial torsion element in $G(K)\slash \langle\!\langle s \rangle\!\rangle$.  
Recall that an irreducible $3$--manifold $M$ with infinite fundamental group is aspherical \cite[p.48 (C.1)]{AFW}, 
hence $\pi_1(M)$ is torsion free \cite{Hem}. 
Hence $\pi_1(K(s))$ is finite or $K(s)$ is reducible. 
Accordingly $s$ is a finite surgery slope or a reducing surgery slope. 

Now let us eliminate the second possibility. 
Assume to the contrary that $s$ is a reducing surgery slope. 
As in the above $G(K) / \langle\!\langle s \rangle\!\rangle$ has a non-trivial torsion element, 
hence $K(s) \ne S^2 \times S^1$ and thus $K(s)$ is a connected sum of two closed $3$--manifolds other than $S^3$. 
(In general, 
the result of a surgery on a non-trivial knot is not $S^2 \times S^1$ \cite{GabaiIII}.)
By the Poincar\'e conjecture, they have non-trivial fundamental groups, 
and $G(K)\slash \langle\!\langle s \rangle\!\rangle = A \ast B$ for some non-trivial groups $A$ and $B$. 

As we have seen in the above, 
$\varphi(\mu)$, 
the image of $\mu$ under the canonical epimorphism 
$\varphi \colon G(K) / \langle\!\langle \mu^m \rangle\!\rangle \to G(K) \slash \langle\!\langle s \rangle\!\rangle$ 
is a non-trivial torsion element in $A \ast B$. 
By \cite[Corollary~4.1.4]{MKS}, 
a non-trivial torsion element in a free product $A \ast B$ is conjugate to a torsion element of $A$ or $B$. 
Thus we may assume that there exists $g \in A\ast B$ such that $g^{-1}\varphi(\mu)g \in A$. 

On the other hand, 
since $G(K)$ is normally generated by $\mu$, 
$A \ast B$ is normally generated by $\varphi(\mu)$. 
This implies that $A \ast B$ is normally generated by an element $g^{-1}\varphi(\mu)g \in A$. 
In particular, 
the normal closure $\langle\!\langle A \rangle\!\rangle$ of $A$ in $A \ast B$ is equal to $A \ast B$, 
and $(A \ast B) \slash \langle\!\langle A \rangle\!\rangle = \{ 1 \}$. 
However, $(A \ast B) \slash \langle\!\langle A \rangle\!\rangle = B \neq \{1\}$ \cite[p.194]{MKS}. 
This is a contradiction. 
\end{proof}

\begin{remark}
\label{finitely_generated}
Let $K$ be a non-trivial knot, and let $r \in \mathbb{Q}$ be a slope. 
Then $\langle\!\langle r \rangle\!\rangle$ is finitely generated if and only if $r$ is a finite surgery slope 
\(i.e. $\pi_1(K(r))$ is finite\) or 
$K$ is a torus knot $T_{p, q}$  and $r = pq$. 
See \cite[Theorem~1.3]{IMT_Magnus}. 
\end{remark}

\section{Behavior of stable commutator length under Dehn fillings}
\label{scl}
Let $K$ be a hyperbolic knot in $S^3$. 
To find elements $g$ with $\mathcal{S}_K(g) = \emptyset$ in the commutator subgroup $[G(K), G(K)]$, 
the stable commutator length plays a key role. 
In this section we prepare some useful results.  

\subsection{Hyperbolic length and Dehn fillings}

Let $M$ be a closed hyperbolic $3$--manifold. 
Then a representative loop of any non-trivial element $g$ of $\pi_1(M)$ is freely homotopic to a unique closed geodesic $c_g$. 
We may define the length of $g \in \pi_1(M)$, 
denoted by $\ell_M(g)$, 
to be the hyperbolic length of the corresponding closed geodesic $c_g$. 

Let $K$ be a hyperbolic knot and 
$g$ a non-trivial element in $G(K)$. 
Suppose that $s \in \mathbb{Q}$ is a hyperbolic surgery slope for $K$. 
If $p_s(g)$ is non-trivial in $\pi_1(K(s))$, 
then we can define
the length $\ell_{K(s)}(p_s(g))$. 
Let us define
\[ L(g) = \inf_{s \in \mathbb{Q}}\{ \ell_{K(s)}(p_s(g)) \: | \: 
K(s) \textrm{ is hyperbolic and $p_s(g) \ne 1$}\} \ge 0.\]

In this subsection we give a simple characterization of elements $g \in G(K)$ with $L(g) > 0$. 

For convenience of readers, 
we briefly recall Thurston's hyperbolic Dehn surgery \cite{T1,T2}. 

By the assumption we have a holonomy (faithful and discrete) representation
\[
\rho \colon G(K) \cong \pi_1(S^3 - K) \to \mathrm{Isom}^+(\mathbb{H}^3) = \mathrm{PSL}(2, \mathbb{C}).
\]

We regard $\rho = \rho_{\infty}$ and denote its image by $\Gamma_{\infty} \cong \pi_1(S^3 - K)$. 
Recall that $\rho(\mu)$ and $\rho(\lambda)$ are parabolic elements.

By small deformation of the holonomy representation $\rho$ up to conjugation, 
we obtain representations 
$\rho_{x, y} \colon \pi_1(S^3 - K) \to \mathrm{Isom}^+(\mathbb{H}^3) = \mathrm{PSL}(2, \mathbb{C})$ whose image is 
$\Gamma_{x, y}$. 
(The meaning of $x, y$ will be clarified below.)
Deforming $\rho$ continuously, 
$(x, y)$ varies over an open set $U_{\infty}$ in $S^2 = \mathbb{R}^2 \cup \{ \infty \}$; see the proof of \cite[Theorem~5.8.2]{T1}.
By Mostow's rigidity theorem \cite{Mostow,Prasad} $\rho_{x, y}(\mu)$ and $\rho_{x, y}(\lambda)$ are not parabolic when $(x, y) \ne \infty$. 

We may take a conjugation in $\mathrm{Isom}^+(\mathbb{H}^3) = \mathrm{PSL}(2, \mathbb{C})$ 
so that 
\[
\rho_{x, y}(\mu) \colon (z, t) \mapsto (\alpha_{x, y, \mu}z, |\alpha_{x, y, \mu}|\,t)
\]
and 
\[
\rho_{x, y}(\lambda) \colon (z, t) \mapsto (\alpha_{x, y, \lambda}z, |\alpha_{x, y, \lambda}|\,t)
\] 
for some complex numbers $\alpha_{x, y, \mu}$ and $\alpha_{x, y, \lambda}$.

Since both $\rho_{x, y}(\mu)$ and $\rho_{x, y}(\lambda)$ are very close to 
$\rho(\mu)$ and $\rho(\lambda)$, respectively, 
both $\alpha_{x, y, \mu}$ and $\alpha_{x, y, \lambda}$ are nearly equal to $1$. 

Then for $\rho_{x, y}$, 
we have
\[
x \log \alpha_{x, y, \mu} + y \log \alpha_{x, y, \lambda} = 2\pi i.
\]

The complex translation length $\log \alpha_{x, y, \mu}$ (resp. $\log \alpha_{x, y, \lambda}$) 
of $\rho_{x, y}(\mu)$ (resp. $\rho_{x, y}(\lambda)$) is well-defined up to the sign. 
Note that since  $\rho_{x, y}(\mu)$ and  $\rho_{x, y}(\lambda)$ commute, 
the pair $(\log \alpha_{x, y, \mu},\  \log \alpha_{x, y, \lambda})$ is well-defined up to sign. 
If we write 
\[
\alpha_{x, y, \mu} = r_{x, y, \mu} e^{i \theta_{x, y, \mu}},\quad 
\alpha_{x, y,\lambda} = r_{x, y, \lambda} e^{i \theta_{x, y, \lambda}},
\] 
the above equation implies
\[
\begin{cases} 
x \log r_{x, y, \mu} + y \log r_{x, y, \lambda} = 0\\ 
x \theta_{x, y, \mu} + y \theta_{x, y, \lambda} = 2 \pi
\end{cases}
\]

As we mentioned above, 
a small deformation $\rho_{x, y}$ of $\rho$ corresponds to 
$(x, y)$ in an open neighborhood $U_{\infty} \subset \mathbb{R}^2 \cup \{ \infty \} = S^2$. 
Thurston's hyperbolic Dehn surgery theorem says that there is an integer $N > 0$ 
such that if relatively prime integers $p$ and $q$ satisfies $|p| + |q| > N$, 
then $\rho_{p, q}$ satisfies 
\[
p \log \alpha_{p, q, \mu} + q \log \alpha_{p, q, \lambda} = 2\pi i, 
\]
and gives an incomplete hyperbolic metric of $S^3 - K$ so that 
its completion is $K(p/q)$ in which $K_{p/q}^*$, the image of the filled solid torus, 
is the shortest closed geodesic in the hyperbolic $3$--manifold $K(p/q)$. 

Although $\rho$ is a faithful representation,  
$\rho_{p, q}$ is a representation of $\pi_1(S^3 - K)$ which is not faithful and 
its image $\Gamma_{p, q}$ is regarded as $\pi_1(K(p/q))$. 
\[
\xymatrix{
\pi_1(E(K))\ar@{->>}[drr]_{\rho_{p,q}}  \ar[rr]^{\rho}_{\cong} &  & \hspace{0.2cm} \Gamma\subset \mathrm{Isom}^+(\mathbf{H}^3)\ar@{->>}[d] & \\
              &  &  \hspace{1.0cm}  \Gamma_{p,q}\subset \mathrm{Isom}^+(\mathbf{H}^3)
}
\]

\begin{proposition}
\label{L>0}
Let $g$ be a non-trivial element of $G(K)$. 
Then 
$L(g) > 0$ if and only if $g$ is a non-peripheral element, 
i.e. it is not conjugate into $P(K) \subset G(K)$. 
\end{proposition}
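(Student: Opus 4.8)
The plan is to prove both directions separately, with the "only if" direction (a peripheral element forces $L(g)=0$) being the easier one and the "if" direction (a non-peripheral element has $L(g)>0$) carrying the real content.

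\medskip

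\noindent\textbf{The peripheral case.}
Suppose $g$ is conjugate into $P(K)$. Since $\mathcal{S}_K$ and $L$ are insensitive to conjugation (Proposition~\ref{S_K_class_function}, and conjugate elements have the same geodesic length in any $K(s)$), I may assume $g \in P(K)$, say $g = \mu^p\lambda^q$ for coprime $p,q$ (if $g$ is a proper power of a primitive peripheral element the argument is the same, only with an extra integer factor). The point is that for the surgery slope $s = p/q$ we have $g = \mu^p\lambda^q \in \langle\!\langle s \rangle\!\rangle$, so $p_s(g)=1$ and $s$ does not contribute to the infimum; but for any other hyperbolic surgery slope $s' = p'/q'$ with $p'q \neq pq'$, the image $\rho_{p',q'}(g)$ is a power of the holonomy of the core geodesic $K^*_{s'}$, and as $|p'|+|q'| \to \infty$ Thurston's theorem forces the complex translation length of that core geodesic to go to $0$. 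Hence $\ell_{K(s')}(p_{s'}(g)) \to 0$ along any sequence of hyperbolic filling slopes tending to $\infty$, so $L(g) = 0$. Concretely: with the normalization in the excerpt, $\rho_{p',q'}(\mu^p\lambda^q)$ acts by scaling $z \mapsto (\alpha_{p',q',\mu})^p(\alpha_{p',q',\lambda})^q z$, and $p\log\alpha_\mu + q\log\alpha_\lambda$ is, up to the linear relation $p'\log\alpha_\mu + q'\log\alpha_\lambda = 2\pi i$, a bounded multiple of the core translation length $\log\alpha_{p',q',\mu^{p'}\lambda^{q'}}$, which $\to 0$.

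\medskip

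\noindent\textbf{The non-peripheral case.}
Now suppose $g$ is not conjugate into $P(K)$; I must produce a uniform lower bound on $\ell_{K(s)}(p_s(g))$ over all hyperbolic surgery slopes $s$ on which $p_s(g)\neq 1$. The strategy is a compactness/geometric-convergence argument. Since $K$ is hyperbolic, only finitely many slopes are exceptional, and as $|p|+|q|\to\infty$ the filled manifolds $K(p/q)$ converge geometrically to the cusped manifold $E(K)$; in particular, for all but finitely many slopes, $K(p/q)$ contains an embedded Margulis tube around the core $K^*_{p/q}$ whose complement is bilipschitz-close to a large compact core of $E(K)$. A representative geodesic for $p_s(g)$ in $K(s)$ either stays in this thick part — in which case, by the geometric convergence and the fact that $g$ is a \emph{fixed} nontrivial element of $\pi_1(E(K))$ (so its geodesic representative in $E(K)$ has some definite length $\ell_0 > 0$, and is not a cusp curve precisely because $g$ is non-peripheral, so it does not escape into the thin cusp), it has length bounded below by roughly $\ell_0/2$ for $|p|+|q|$ large — or it enters the Margulis tube around the core. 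In the latter situation one uses that $g$, being non-peripheral, is not a power of $\mu^p\lambda^q$ in $\pi_1(E(K))$, hence $p_s(g)$ is not a power of the core class; a curve representing a non-core-power class that enters the core's Margulis tube must exit it, and the Margulis-tube geometry gives a definite length lower bound (the tube has a definite radius once the slope is long). Finitely many remaining slopes are handled individually: each such $K(s)$ that is hyperbolic with $p_s(g)\neq 1$ contributes a single positive number $\ell_{K(s)}(p_s(g))$, and the minimum over this finite set is positive. Taking the minimum of the two bounds gives $L(g)>0$.

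\medskip

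\noindent\textbf{Main obstacle.}
The delicate point is making the dichotomy "geodesic stays thick / geodesic enters the core tube" quantitative and uniform in the slope — i.e. ruling out the possibility that the geodesic representative of $p_s(g)$ spends almost all its length coiling around very close to the core geodesic $K^*_s$ (which is itself short) yet still represents a class that is not a power of the core. This is exactly where one needs the structure of Margulis tubes: a closed geodesic entering a Margulis tube of a short core either is a power of the core, or it must traverse the tube and hence accrue length at least the diameter of the tube, which is bounded below because the core's length is bounded above (short core $\Rightarrow$ fat tube). I expect this step — citing the standard Margulis-tube length estimates (e.g. from Thurston's notes or Gromov--Thurston $2\pi$-type bounds) and combining them with the geometric convergence $K(p/q) \to E(K)$ — to be the technical heart of the proof, with everything else being bookkeeping over the finitely many exceptional or short-geodesic slopes.
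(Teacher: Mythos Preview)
Your overall strategy is sound, but both directions have gaps, and in the non-peripheral direction you are working much harder than necessary.

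\textbf{Peripheral case.} The claim that $p\log\alpha_\mu + q\log\alpha_\lambda$ is a \emph{bounded} multiple of the core translation length is false: writing $(p,q) = \alpha(p',q') + \beta(r,t)$ with $(r,t)$ dual to $(p',q')$, the real translation length of $p_{s'}(\mu^p\lambda^q)$ is $|\beta|\,\ell(K^*_{s'})$ with $\beta = \pm(pq'-qp')$, which is unbounded as $(p',q')\to\infty$. So ``core length $\to 0$'' does not by itself give ``length of $p_{s'}(g)\to 0$.'' The paper sidesteps this neatly by restricting to an infinite sequence of slopes $p_i/q_i$ with $\Delta(p/q,\,p_i/q_i)=|pq_i-qp_i|=1$; then the multiplicity is exactly~$1$ (or $m$, if $g=(\mu^p\lambda^q)^m$), so $\ell_{K(p_i/q_i)}(p_{p_i/q_i}(g)) = m\,\ell(K^*_{p_i/q_i})\to 0$ directly. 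Your stronger claim (``along any sequence'') is in fact true by Neumann--Zagier asymptotics, but that is more than you argued.

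\textbf{Non-peripheral case.} Here the paper's argument is much shorter and avoids your Margulis-tube case analysis entirely. Since $g$ is non-peripheral, $\rho(g)$ is loxodromic, so its translation length $\ell_0 = |\log r_g|>0$. The deformed representations $\rho_{x,y}$ vary continuously on a neighbourhood $U_\infty$ of $\infty$ in $S^2$, hence so does the translation length of $\rho_{x,y}(g)$. Continuity at $\infty$ gives $\varepsilon_0>0$ and $N_0$ with $\ell_{K(p/q)}(p_{p/q}(g))>\varepsilon_0$ whenever $|p|+|q|>N_0$; the finitely many remaining hyperbolic slopes contribute a positive minimum $\varepsilon_1$, and $L(g)\ge\min\{\varepsilon_0,\varepsilon_1\}>0$. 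That is the whole argument.

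Your geometric-convergence route is not wrong in spirit, but as written it has two holes. First, geometric convergence of $K(s)\to E(K)$ readily gives \emph{upper} bounds on $\ell_{K(s)}(p_s(g))$ (transport the geodesic in $E(K)$ into $K(s)$), not lower bounds; you have not explained why the actual geodesic in $K(s)$ cannot be much shorter. Second, your tube dichotomy omits the case where $p_s(g)$ \emph{is} conjugate to a power of the core: nothing in ``$g$ non-peripheral in $G(K)$'' prevents $p_s(g)$ from becoming a core-power after a particular filling, and if it does, its geodesic is a multiple cover of the short core and your tube-crossing bound does not apply. The representation-continuity argument makes all of this moot.
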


\begin{proof}
Assume first  that $g$ is non-peripheral. 
Then $\rho(g)$ is loxodromic, 
and up to conjugation we may assume that 
its restriction to $\partial \mathbb{H}^3 = \mathbb{C} \cup \{ \infty \}$ is given by 
$\rho(g) \colon z \mapsto \alpha_g z$, 
where $\alpha_g = r_g e^{i \theta_g}$ and $r_g > 1$. 
Thus the translation length of $\rho(g)$ is $\left| \log |\alpha_g| \right| = | \log r_g | > 0$. 
Under small deformation of $\rho$, 
the translation length $| \log r_{x, y, g} |$ of $\rho_{x, y}(g)$ deforms continuously together with $(x, y) \in U_{\infty}$. 
Thus $g$ is freely homotopic to a unique closed geodesic $c_g \subset S^3 - K$, the image of the axis of 
$\rho(g) \subset \mathbb{H}^3$, 
of length $\ell_g =  | \log r_g | > 0$. 
Hence, we can take constants $\varepsilon_0 > 0$ and $N_0 > 0$ so that 
if $|p| + |q| > N_0$, 
then the length of $c_g$ in $K(p/q)$, 
i.e. $\ell_{K(p/q)}(g)$,  
is greater than $\varepsilon_0$, 
while the length of $K^*_{p/q} < \varepsilon_0$. 
Since there are only finitely many slopes $p_1/q_1, \dots, p_k/q_k$ with $|p_i| + |q_i| \le N_0$, 
let us put $\varepsilon_1 = \mathrm{min}\{ \ell_{K(p_1/q_1)}(p_{p_1/q_1}(g)), \dots, \ell_{K(p_k/q_k)}(p_{p_k/q_k}(g)) \}$, 
where $K(p_i/q_i)$ is hyperbolic and $p_{p_i/q_i}(g) \ne 1$ in $\pi_1(K(p_i/q_i))$ ($i = 1, \dots, k$). 
Finally put $\varepsilon = \mathrm{min}\{ \varepsilon_0, \varepsilon_1 \}$. 
Then the length of $c_g$ in $K(p/q)$, i.e. $\ell_{K(p/q)}(g)$ is greater than or equal to $\varepsilon > 0$ for all 
hyperbolic surgery slopes $p/q \in \mathbb{Q}$ with $p_{p/q}(g) \ne 1$.  
This shows that $L(g) \ge \varepsilon > 0$.

Suppose that $g$ is peripheral. 
Then it is conjugate to $(\mu^a \lambda^b)^m \in P(K)$ for some integers $a, b$ and $m$, 
where $a$ and $b$ are relatively prime. 
Thus $p_{a/b}(g) = 1$; 
otherwise $p_{p/q}(g)$ ($p/q \ne a/b$) is freely homotopic to $n_{p/q}$--th power of the dual knot $K^*_{p/q}$, 
the core of the filling solid torus in $K(p/q)$. 
It should be noted that $n_{p/q}$ varies according as $p/q$. 
So to controll $n_{p/q}$, 
we take an infinite sequence of surgery slopes 
$p_i/q_i$ so that the distance $\Delta(a/b,\ p_i/q_i) = |aq_i - bp_i| = 1$ with $|p_i| + |q_i| \to \infty$ $(i \to \infty)$. 
Then a representative of the slope element $\mu^a \lambda^b$ wraps once in the $p_i/q_i$--filling solid torus, so it is freely homotopic to $K^*_{p_i/q_i}$, i.e. $n_{p_i/q_i} = 1$.  
This shows that $\ell_{K(p_i/q_i)}(p_{p_i/q_i}(g)) = m \ell(K^*_{p_i/q_i})$. 

Thurston's hyperbolic Dehn surgery theorem says that 
when $|p_i| + |q_i|$ goes to $\infty$, 
$K(p_i/q_i)$ converges to $S^3 - K$ and $K^*_{p_i/q_i}$ is a closed geodesic whose length tends to $0$. 
Hence, when $|p_i| + |q_i|$ goes to $\infty$, 
$\ell_{K(p_i/q_i)}(p_{p_i/q_i}(g)) = m \ell(K^*_{p_i/q_i})$ also tends to $0$, and so $L(g) = 0$. 
\end{proof}

\bigskip

\subsection{Stable commutator length vs. hyperbolic length}
In this subsection we will recall 
the definition of the stable commutator length, and then collect some useful results. 

For $g \in [G,G]$ the \emph{commutator length\/} $\mathrm{cl}_G(g)$ is the smallest number of commutators in $G$ whose product is equal to $g$. 
The \emph{stable commutator length\/} $\mathrm{scl}_{G}(g)$ of $g \in [G,G]$ is defined to be the limit
\begin{equation}
\label{def1}
\mathrm{scl}_{G}(g) = \lim_{n \to \infty} \frac{\mathrm{cl}(g^n)}{n}.
\end{equation}

Since $\mathrm{cl}_G(g^n)$ is non-negative and subadditive, Fekete's subadditivity lemma shows that 
this limit exists.  

We will extend (\ref{def1}) to the stable commutator length $\mathrm{scl}(g)$ for an element $g$ which is not necessarily in $[G, G]$ as 
\begin{equation}
\label{def2}
\mathrm{scl}_G(g) = \begin{cases}
\frac{\textrm{scl}_G(g^{k})}{k} & \mbox{ if } g^{k} \in [G,G] \mbox{ for some } k > 0,\\
\infty & \mbox{otherwise}.
\end{cases}
\end{equation}

By definition (\ref{def1}), 
it is easy to observe that if $g^k,\, g^{\ell} \in [G, G]$, 
then $\displaystyle \frac{\mathrm{scl}(g^k)}{k} = \frac{\mathrm{scl}_G(g^{\ell})}{\ell}$ for any $k,\, \ell > 0$. 
So in (\ref{def2}) $\mathrm{scl}_G(g)$ is independent of the choice of $k > 0$ such that $g^k \in [G, G]$. 
In particular, we have the following property; see \cite[Lemma~2.1]{IMT_decomposition} for its proof. 

\begin{lemma}
\label{scl_g^k}
For any $g \in G$ and $k > 0$,  
we have $\mathrm{scl}_G(g^{k})=k\,\mathrm{scl}_G(g)$. 
\end{lemma}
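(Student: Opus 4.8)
The plan is to first reduce to the case $g\in[G,G]$, and then establish the scaling relation by a direct subsequence argument applied to the defining limit \textup{(\ref{def1})}.

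\textit{Reduction.} If no positive power of $g$ lies in $[G,G]$, then neither does any positive power of $g^{k}$, since a power of $g^{k}$ is in particular a power of $g$; hence both $\mathrm{scl}_G(g)$ and $\mathrm{scl}_G(g^{k})$ equal $\infty$ by \textup{(\ref{def2})} and the identity holds trivially. Otherwise fix $\ell>0$ with $g^{\ell}\in[G,G]$; then $(g^{k})^{\ell}=g^{k\ell}\in[G,G]$ as well. Writing $h=g^{\ell}\in[G,G]$, definition \textup{(\ref{def2})} together with the remark recorded just before the statement (that \textup{(\ref{def2})} does not depend on the chosen exponent) gives $\mathrm{scl}_G(g)=\mathrm{scl}_G(h)/\ell$ and $\mathrm{scl}_G(g^{k})=\mathrm{scl}_G(h^{k})/\ell$. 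Thus it suffices to prove $\mathrm{scl}_G(h^{k})=k\,\mathrm{scl}_G(h)$ for $h\in[G,G]$.

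\textit{Core computation.} For $h\in[G,G]$ the sequence $a_{m}:=\mathrm{cl}_G(h^{m})/m$ converges to $\mathrm{scl}_G(h)$ by \textup{(\ref{def1})} (and Fekete's lemma, as noted in the excerpt). Since $h^{k}\in[G,G]$, applying \textup{(\ref{def1})} to $h^{k}$ yields
\[
\mathrm{scl}_G(h^{k})=\lim_{n\to\infty}\frac{\mathrm{cl}_G((h^{k})^{n})}{n}=\lim_{n\to\infty}\frac{\mathrm{cl}_G(h^{kn})}{n}=k\lim_{n\to\infty}a_{kn}.
\]
As $(a_{kn})_{n\ge 1}$ is a subsequence of the convergent sequence $(a_{m})_{m\ge 1}$, it converges to the same limit $\mathrm{scl}_G(h)$, so $\mathrm{scl}_G(h^{k})=k\,\mathrm{scl}_G(h)$, which combined with the reduction proves the lemma.

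I do not anticipate a genuine obstacle here; the only point demanding a little care is the bookkeeping that \textup{(\ref{def2})} is well defined and behaves compatibly when passing among $g$, $g^{\ell}$, and $g^{k}$, which is exactly the observation stated immediately before the lemma and can be invoked directly.
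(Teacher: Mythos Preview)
Your proof is correct. The paper does not actually supply a proof of this lemma; it merely cites \cite[Lemma~2.1]{IMT_decomposition}, so there is nothing to compare against beyond noting that your self-contained subsequence argument for the case $h\in[G,G]$, together with the reduction via the well-definedness of \textup{(\ref{def2})}, is the standard elementary route and fills in what the paper leaves to the reference.
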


Since a homomorphism sends a commutator to a commutator, 
we observe the following monotonicity property of scl \cite[Lemma~2.4]{Cal_MSJ}. 

\begin{lemma}
\label{monotonicity}
Let $\varphi\colon G \to H$ be a homomorphism. 
Then \[
\mathrm{scl}_H(\varphi(g)) \leq \mathrm{scl}_G(g)\quad  \textrm{for all}\quad g \in G.
\] 
\end{lemma}

Recall that a map $\phi\colon G \rightarrow \mathbb{R}$ is \emph{homogeneous quasimorphism\/} of defect $D(\phi)$ if
\[ D(\phi)=\sup_{g,h \in G}|\phi(gh)-\phi(g)-\phi(h)| < \infty, \quad \phi(g^{k})=k\phi(g) \ (\forall g\in G, k \in \mathbb{Z}).\]

Note that homogeneous quasimorphism is constant on conjugacy classes \cite[2.2.3]{Cal_MSJ}. 

\begin{lemma}
\label{quasimorphism_class_function}
A homogeneous quasimorphism $\phi \colon G \to \mathbb{R}$ satisfies 
$\phi(g^{-1}hg)=\phi(h)$ for all $g,h \in G$. 
\end{lemma}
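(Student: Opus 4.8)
The statement to prove is Lemma~\ref{quasimorphism_class_function}: a homogeneous quasimorphism $\phi \colon G \to \mathbb{R}$ satisfies $\phi(g^{-1}hg) = \phi(h)$ for all $g, h \in G$.

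This is a standard fact. The plan is to exploit the homogeneity $\phi(x^k) = k\phi(x)$ together with the bounded defect $D(\phi)$ to show that $\phi$ is constant on conjugacy classes. The key observation is that $(g^{-1}hg)^k = g^{-1}h^k g$ for every $k$, so the conjugate element has the same powers (up to conjugation by the fixed element $g$), and we can compare $\phi$ on these powers using the quasimorphism inequality.

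\begin{proof}
Fix $g, h \in G$ and write $h' = g^{-1} h g$. For every integer $k \geq 1$ we have $(h')^k = g^{-1} h^k g$. Applying the defining inequality of a quasimorphism twice,
\[
\bigl| \phi\bigl(g^{-1} h^k g\bigr) - \phi\bigl(g^{-1}\bigr) - \phi\bigl(h^k g\bigr) \bigr| \leq D(\phi)
\quad\text{and}\quad
\bigl| \phi\bigl(h^k g\bigr) - \phi\bigl(h^k\bigr) - \phi(g) \bigr| \leq D(\phi).
\]
Since $\phi$ is homogeneous, $\phi(g^{-1}) = -\phi(g)$, so adding these and using the triangle inequality gives
\[
\bigl| \phi\bigl((h')^k\bigr) - \phi\bigl(h^k\bigr) \bigr| \leq 2 D(\phi)
\]
for all $k \geq 1$. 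By homogeneity $\phi\bigl((h')^k\bigr) = k\,\phi(h')$ and $\phi\bigl(h^k\bigr) = k\,\phi(h)$, hence
\[
k \,\bigl| \phi(h') - \phi(h) \bigr| \leq 2 D(\phi)
\]
for all $k \geq 1$. Letting $k \to \infty$ forces $\phi(h') = \phi(h)$, i.e. $\phi(g^{-1} h g) = \phi(h)$.
\end{proof}

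The only subtlety — and it is minor — is the bookkeeping in peeling off $\phi(g^{-1})$ and $\phi(g)$ from the conjugated power; there is no real obstacle here, as homogeneity makes these terms cancel exactly rather than merely up to bounded error. Everything reduces to the divergence of $k$ against the fixed bound $2D(\phi)$.
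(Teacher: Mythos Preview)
Your proof is correct and is the standard argument for this well-known fact. The paper does not actually supply its own proof of this lemma; it simply states the result with a citation to \cite[2.2.3]{Cal_MSJ}, so there is no approach to compare against.
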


To estimate the scl, the following Bavard's Duality Theorem is useful.

\begin{theorem}[Bavard's Duality Theorem \cite{Bavard}]
\label{Bavard}
For $g \in [G, G]$,
\[ \mathrm{scl}_{G}(g)=\sup_{\phi} \frac{|\phi(g)|}{2D(\phi)} \]
where $\phi\colon G \rightarrow \mathbb{R}$ runs all homogeneous quasimorphisms of $G$ which are not homomorphisms. 
\end{theorem}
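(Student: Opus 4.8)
The plan is to establish the two inequalities of the claimed identity separately; only the second requires genuine work.

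\emph{The inequality $\sup_\phi \frac{|\phi(g)|}{2D(\phi)} \le \mathrm{scl}_G(g)$.} Fix a homogeneous quasimorphism $\phi$ which is not a homomorphism, so that $D(\phi) > 0$. First I would observe that $|\phi([a,b])| \le D(\phi)$ for all $a,b \in G$: since $\phi$ is homogeneous, $\phi(b^{-1}) = -\phi(b)$, and since $\phi$ is a class function (Lemma~\ref{quasimorphism_class_function}), $\phi(aba^{-1}) = \phi(b)$; feeding these into the quasimorphism inequality applied to the product $aba^{-1}\cdot b^{-1}$ gives $|\phi(aba^{-1}b^{-1})| \le |\phi(aba^{-1}) + \phi(b^{-1})| + D(\phi) = D(\phi)$. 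A straightforward induction on the quasimorphism inequality yields $|\phi(x_1\cdots x_n) - \sum_{i=1}^n \phi(x_i)| \le (n-1)D(\phi)$, so if $g$ is a product of $n = \mathrm{cl}_G(g)$ commutators then $|\phi(g)| \le nD(\phi) + (n-1)D(\phi) = (2n-1)D(\phi)$. Applying this to $g^m$ and using $\phi(g^m) = m\phi(g)$ (homogeneity) gives $m|\phi(g)| \le (2\,\mathrm{cl}_G(g^m) - 1)D(\phi)$; dividing by $m$ and letting $m \to \infty$, the definition of $\mathrm{scl}_G$ yields $|\phi(g)| \le 2\,\mathrm{scl}_G(g)\,D(\phi)$, hence the claim after taking the supremum over $\phi$.

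\emph{The reverse inequality.} Here I would set up the functional-analytic duality underlying the theorem. Extend $\mathrm{scl}_G$ from $[G,G]$ to a seminorm $\widehat{\mathrm{scl}}$ on the real vector space $B_1^H(G)$ of homogenized $1$-boundaries of $G$: on a single conjugacy class $[g]$ with $g \in [G,G]$ one sets $\widehat{\mathrm{scl}}([g]) = \mathrm{scl}_G(g)$, and on a general chain $\widehat{\mathrm{scl}}(c) = \inf\{\tfrac12\|A\|_1 : \partial_2 A = c\}$ is the induced $\ell^1$-filling seminorm. The key structural fact --- which is the real content of Bavard duality --- is that the continuous dual of $(B_1^H(G), \widehat{\mathrm{scl}})$ is canonically the space of homogeneous quasimorphisms on $G$ modulo $\mathrm{Hom}(G,\mathbb{R})$, with dual norm $[\phi] \mapsto 2D(\phi)$ and pairing $\langle [\phi], [g]\rangle = \phi(g)$. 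Granting this, Hahn--Banach finishes the argument: if $\mathrm{scl}_G(g) = s > 0$, then since $\widehat{\mathrm{scl}}([g]) = s$ there is a support functional on $B_1^H(G)$ of dual norm $1$ attaining the value $s$ at $[g]$; it is represented by a homogeneous quasimorphism $\phi$, necessarily not a homomorphism, with $2D(\phi) = 1$ and $\phi(g) = s$, so $\frac{|\phi(g)|}{2D(\phi)} = s$. If $s = 0$, the first inequality already forces $\phi(g) = 0$ for every homogeneous quasimorphism, so both sides of the identity vanish.

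\emph{Where the difficulty lies.} The only nontrivial step is the identification of the dual of $(B_1^H(G), \widehat{\mathrm{scl}})$ with the space of homogeneous quasimorphisms modulo homomorphisms; concretely, the hard half is showing that \emph{every} $\widehat{\mathrm{scl}}$-bounded linear functional on the homogenized boundaries is, modulo a homomorphism, the restriction of a genuine homogeneous quasimorphism with controlled defect. One produces such a quasimorphism by choosing a set-theoretic primitive for the functional, estimating the failure of additivity in terms of the filling seminorm, and homogenizing; bounding the resulting defect is the delicate point. Equivalently, in the language of bounded cohomology, one must identify this space with the kernel of the comparison map $H^2_b(G;\mathbb{R}) \to H^2(G;\mathbb{R})$ and check that $\mathrm{scl}_G$ is computed by the dual Gromov norm under $\partial_2$ --- this is precisely the work carried out in \cite{Bavard} (see also \cite{Cal_MSJ}).
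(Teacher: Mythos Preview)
The paper does not prove this theorem at all: it is stated as a citation from \cite{Bavard} (see also \cite{Cal_MSJ}) and used as a black box, so there is no ``paper's own proof'' to compare against. Your sketch is a correct outline of the standard argument found in those references --- the easy inequality via the quasimorphism estimate on commutators, and the converse via Hahn--Banach after identifying the dual of the homogenized boundaries with homogeneous quasimorphisms modulo homomorphisms --- and you have correctly located the nontrivial content in that duality identification.
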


The following inequality is well-known for the expert. 
Although this is directly proved by noting that $(gh)^{2n}g^{-2n}h^{-2n}$ can be written as a product of $n$ commutators \cite[p. 45]{Cal_MSJ},  
here we prove this using Bavard's duality. 

\begin{lemma}
\label{scl_product}
Let $G$ be a group, and let $g, h$ be non-trivial elements of $G$. 
Assume that abelianization $G/[G, G]$ is a finite group, 
or $g, h \in [G, G]$ \(hence $gh \in [G, G]$ as well\). 
Then we have 
\[
\mathrm{scl}_G(gh) \ge \mathrm{scl}_G(g) -\mathrm{scl}_G(h) -\frac{1}{2}.
\]
\end{lemma}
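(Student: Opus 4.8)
The plan is to use Bavard's Duality Theorem to reduce the inequality to an estimate for an arbitrary homogeneous quasimorphism $\phi$, and to exploit the hypothesis (either $G/[G,G]$ finite or $g,h\in[G,G]$) to pass freely between $\mathrm{scl}$ of a power of an element and $\mathrm{scl}$ of the element itself. First I would fix a homogeneous quasimorphism $\phi\colon G\to\mathbb{R}$ which is not a homomorphism, with defect $D(\phi)$; the goal is to show $|\phi(gh)| \ge |\phi(g)| - |\phi(h)| - D(\phi)$, since dividing by $2D(\phi)$ and taking the supremum over $\phi$ gives exactly the claimed bound via Bavard. This last step needs a small comment: $\mathrm{scl}_G(gh) = \sup_\phi |\phi(gh)|/(2D(\phi))$ only requires that $gh\in[G,G]$, which holds under either hypothesis (in the finite-abelianization case, replace $g,h,gh$ by suitable uniform powers lying in $[G,G]$ and use Lemma~\ref{scl_g^k} to absorb the power — the constants $\mathrm{scl}_G(g)$, $\mathrm{scl}_G(h)$, $\tfrac12$ all scale the same way so the inequality for powers is equivalent to the one stated, up to the extended definition~\eqref{def2}).

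The core inequality for a fixed $\phi$ comes from the defining property of a quasimorphism applied to the product $gh$: from $|\phi(gh) - \phi(g) - \phi(h)| \le D(\phi)$ we get $|\phi(g)| \le |\phi(gh)| + |\phi(h)| + D(\phi)$ by the triangle inequality, i.e. $|\phi(gh)| \ge |\phi(g)| - |\phi(h)| - D(\phi)$. Then I would divide by $2D(\phi)$:
\[
\frac{|\phi(gh)|}{2D(\phi)} \ge \frac{|\phi(g)|}{2D(\phi)} - \frac{|\phi(h)|}{2D(\phi)} - \frac{1}{2}.
\]
Taking the supremum over all non-homomorphism homogeneous quasimorphisms $\phi$, the left side is $\mathrm{scl}_G(gh)$ by Bavard, the first term on the right has supremum $\mathrm{scl}_G(g)$, and for the middle term one uses $|\phi(h)|/(2D(\phi)) \le \mathrm{scl}_G(h)$ for every such $\phi$ (again Bavard), so $-|\phi(h)|/(2D(\phi)) \ge -\mathrm{scl}_G(h)$. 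Combining, $\mathrm{scl}_G(gh) \ge \mathrm{scl}_G(g) - \mathrm{scl}_G(h) - \tfrac12$.

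The one genuine subtlety — the main obstacle, though a mild one — is the interaction between the two terms inside the supremum: the supremum of a difference is not the difference of suprema, so I must be careful that the bound $-|\phi(h)|/(2D(\phi)) \ge -\mathrm{scl}_G(h)$ is applied \emph{pointwise in $\phi$ before} taking the sup over $\phi$ of the whole expression, which is legitimate precisely because it holds for every individual $\phi$. A secondary point is handling the degenerate case $D(\phi)=0$ (i.e. $\phi$ a homomorphism), which is excluded from Bavard's supremum, and the case where $\mathrm{scl}_G(g)$ or $\mathrm{scl}_G(h)$ is infinite (the finite-abelianization or $[G,G]$-membership hypotheses rule this out, or the inequality is vacuous). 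I would also remark, as the parenthetical in the statement already notes, that the inequality admits an elementary direct proof: $(gh)^{2n}g^{-2n}h^{-2n}$ is a product of $n$ commutators, so $\mathrm{cl}_G\big((gh)^{2n}\big) \le n + \mathrm{cl}_G(g^{2n}) + \mathrm{cl}_G(h^{2n}) + O(1)$, and dividing by $2n$ and letting $n\to\infty$ gives the same bound — but the Bavard argument is cleaner and serves the stated purpose of illustrating the duality.
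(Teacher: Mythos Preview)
Your proof is correct and follows essentially the same approach as the paper: both use Bavard's duality together with the defect inequality $|\phi(gh)-\phi(g)-\phi(h)|\le D(\phi)$, handle the finite-abelianization case by passing to powers via Lemma~\ref{scl_g^k}, and then bound one term pointwise by $\mathrm{scl}_G(h)$ before taking the supremum. The only cosmetic difference is that the paper first substitutes $g=ab$, $h=b^{-1}$ to rewrite the claim as the subadditivity statement $\mathrm{scl}_G(ab)\le \mathrm{scl}_G(a)+\mathrm{scl}_G(b)+\tfrac12$ and uses an $\varepsilon$-approximation to the supremum, whereas you argue directly with the original inequality and take the supremum at the end; the underlying logic is identical.
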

\begin{proof}
Put $g=ab$, $h=b^{-1}$; 
if $g, h \in [G, G]$, then $a, b, ab \in [G, G]$. 
The assertion is equivalent to the assertion  
\[
\mathrm{scl}_G(ab) \le  \mathrm{scl}_G(a) + \mathrm{scl}_G(b) +\frac{1}{2}. 
\]
Even when $a, b, ab \not\in [G, G]$, 
since $G/[G, G]$ is finite, 
we may assume that $a^m, b^n, (ab)^{\ell} \in [G, G]$ for some integers $m, n, \ell > 0$. 
For every $\varepsilon>0$, 
Bavard's duality implies that there exists a homogeneous quasimorphism $\phi\colon G\rightarrow \mathbb{R}$ such that 
$\mathrm{scl}((ab)^{\ell}) -\varepsilon < \frac{\phi((ab)^{\ell})}{2D(\phi)}$. 
Then
\begin{align*}
\mathrm{scl}(ab)-\varepsilon  
& = \frac{\mathrm{scl}((ab)^{\ell})}{\ell} - \varepsilon
<  \frac{\mathrm{scl}((ab)^{\ell})}{\ell} - \frac{\varepsilon}{\ell} = \frac{1}{\ell}\left(\mathrm{scl}((ab)^{\ell}) -\varepsilon \right) \\
&
< \frac{1}{\ell} \frac{\phi((ab)^{\ell})}{2D(\phi)}
= \frac{\phi(ab)}{2D(\phi)} 
\leq \frac{\phi(a)}{2D(\phi)}+ \frac{\phi(b)}{2D(\phi)} + \frac{D(\phi)}{2D(\phi)}\\
& \leq  \frac{|\phi(a)|}{2D(\phi)}+ \frac{|\phi(b)|}{2D(\phi)} +\frac{1}{2} 
 = \frac{1}{m} \frac{|\phi(a^m)|}{2D(\phi)} + \frac{1}{n} \frac{|\phi(b^n)|}{2D(\phi)} +\frac{1}{2} \\
& \leq \frac{\mathrm{scl}(a^m)}{m} + \frac{\mathrm{scl}(b^n)}{n}  +\frac{1}{2} \\
& = \mathrm{scl}(a) + \mathrm{scl}(b) +\frac{1}{2}.
\end{align*}
\end{proof}

\medskip

Let $M$ be a hyperbolic $3$--manifold and $g$ a non-trivial element in $\pi_1(M)$. 
Recall that $\ell_M(g)$ denotes the length of a unique closed geodesic $c_g$ 
in a free homotopy class of $g$.
While hyperbolic length enjoys the property described in Lemma~\ref{scl_g^k}, 
it is not known to satisfy the monotonicity property described in Lemma~\ref{monotonicity} for 
the epimorphism $p_r \colon G(K) \to  \pi_1(K(r))$. 

\smallskip

The next strong result due to Calegari \cite{Cal_GAFA} relates
stable commutator length $\textrm{scl}_{\pi_1(M)}(g)$ and hyperbolic length $\ell_M(g)$. 

\begin{theorem}[Length inequality \cite{Cal_GAFA}]
\label{theorem:length-inequality}
For any $\varepsilon>0$ there exists a constant $\delta(\varepsilon)>0$ 
such that if $M$ is a complete hyperbolic $3$--manifold $M$ and 
$g \in \pi_1(M)$ with $\mathrm{scl}_{\pi_1(M)}(g) \le \delta(\varepsilon)$, 
then $\ell_{M}(g) \leq \varepsilon$.
\end{theorem}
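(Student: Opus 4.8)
The plan is to establish the contrapositive in quantitative form, namely a lower bound for stable commutator length in terms of hyperbolic length: there is a universal monotone increasing function $F\colon (0,\infty)\to(0,\infty)$ such that $\mathrm{scl}_{\pi_1(M)}(g)\ge F(\ell_M(g))$ for every closed hyperbolic $3$--manifold $M$ and every non-trivial $g\in\pi_1(M)$. Granting this, the theorem follows by setting $\delta(\varepsilon)=F(\varepsilon)$: if $\mathrm{scl}_{\pi_1(M)}(g)\le\delta(\varepsilon)=F(\varepsilon)$ then, since $F$ is increasing, $\ell_M(g)\le\varepsilon$. (When $g=1$ or $\mathrm{scl}_{\pi_1(M)}(g)=\infty$ there is nothing to prove.)

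To prove the lower bound I would first pass from scl to its surface description. Using Lemma~\ref{scl_g^k} and the identification of the commutator length $\mathrm{cl}(g^k)$ with the least genus of an oriented surface bounding a loop in the class $g^k$, one has $\mathrm{scl}_{\pi_1(M)}(g)=\inf_S\frac{-\chi^-(S)}{2\,n(S)}$, the infimum taken over compact oriented surfaces $S$ with a map $f\colon S\to M$ whose restriction to $\partial S$ covers a tubular neighbourhood of the geodesic $c_g$ with total winding number $n(S)>0$. Thus it suffices to show $-\chi^-(S)\ge 2\,n(S)\,F(\ell_M(g))$ for \emph{every} such admissible surface.

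Fix an admissible $S$. After discarding sphere and disk components and compressing and boundary-compressing $f$ (none of which increases $-\chi^-/n$), we may take $f$ to be $\pi_1$--injective on each component, and then replace it by a least-area (minimal) representative in its relative homotopy class, so that the induced metric on $S$ has Gauss curvature $\le -1$ and $\partial S$, mapping onto the geodesic $c_g$, is itself geodesic in $S$; hence by Gauss--Bonnet $\mathrm{Area}(S)\le -2\pi\chi(S)$. It remains to bound $\mathrm{Area}(S)$ from below by a definite multiple of $n(S)\,\ell_M(g)$. For this one works locally near $c_g$: lifting a neighbourhood of $\partial S$ to the universal cover $\mathbb H^3$ along the axis of $g$ (or working inside the Margulis tube of $c_g$), one applies the monotonicity formula for minimal surfaces to control the area of $S$ in balls centred at boundary points, and sums these estimates along $\partial S$, whose total length in $M$ is $n(S)\,\ell_M(g)$. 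This yields $\mathrm{Area}(S)\ge n(S)\,\Phi(\ell_M(g))$ for a suitable positive function $\Phi$ (comparable to $\ell$ when $\ell$ is bounded below, and to a smaller but still positive quantity when $c_g$ is very short, since then the available comparison radius shrinks), whence $-\chi^-(S)\ge\frac{n(S)}{2\pi}\,\Phi(\ell_M(g))$, and one may take $F=\Phi/4\pi$.

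The main obstacle is precisely this last geometric estimate. Intrinsic surface geometry alone is insufficient: a one--holed torus carries hyperbolic metrics with geodesic boundary of arbitrarily large length while $-\chi$ remains $1$, so one must genuinely use how $f(S)$ is immersed in $M$ near $c_g$ — that the $n(S)$ strands of $f(\partial S)$ winding around $c_g$ are geometrically separated, forcing area proportional to the boundary length. The short-geodesic regime is the delicate point, where the normal injectivity radius of $\partial S$ in $S$ may be small and the comparison radius in the monotonicity formula must be chosen with care; this is exactly the content of the argument of Calegari \cite{Cal_GAFA}, to which we refer for the analytic details and which is the source of Theorem~\ref{theorem:length-inequality}.
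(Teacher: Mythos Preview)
The paper does not prove Theorem~\ref{theorem:length-inequality}: it is quoted from Calegari~\cite{Cal_GAFA} and used as a black box, with no argument supplied. There is therefore no ``paper's own proof'' to compare against; what you have written is a sketch of Calegari's original argument rather than an alternative to anything in this paper.

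As a sketch of~\cite{Cal_GAFA} your outline is broadly faithful: pass to the surface description of $\mathrm{scl}$, straighten an admissible surface to a pleated (or least-area) representative so that Gauss--Bonnet gives $\mathrm{Area}(S)\le -2\pi\chi(S)$, and then seek a lower bound on area in terms of the total boundary length $n(S)\,\ell_M(g)$. You also correctly isolate the genuine difficulty---intrinsic surface geometry cannot give such a bound, and one must use the ambient $3$--dimensional geometry near $c_g$, with particular care in the short-geodesic (thin-part) regime. Since in the end you defer precisely that estimate back to~\cite{Cal_GAFA}, your proposal amounts to the same citation the paper makes, together with an honest outline of where the work lies; it is not an independent proof.
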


In Theorem~\ref{theorem:length-inequality}, 
it should be emphasized that the constant $\delta(\varepsilon)$ is independent of $M$, it depends only on $\varepsilon > 0$. 
Thanks to this universality, 
as a consequence of Proposition~\ref{L>0} and Theorem~\ref{theorem:length-inequality}, 
we obtain the following universal lower bound $\delta_g > 0$ for stable commutator length of $p_s(g) \ne 1$ independent of slopes $s$. 

\begin{theorem}[Lower bound of scl over hyperbolic surgery slopes]
\label{scl_bound}
Let $K$ be a hyperbolic knot and $g \in G(K)$ a non-peripheral element. 
Then we have a constant $\delta_g > 0$ such that for all hyperbolic surgery slopes $s \in \mathbb{Q}$, 
\[
\mathrm{scl}_{\pi_1(K(s))}(p_s(g)) > \delta_g > 0\quad \mbox{whenever}\quad p_s(g) \neq 1.
\]
\end{theorem}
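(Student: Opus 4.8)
The plan is to combine the contrapositive form of Calegari's Length inequality (Theorem~\ref{theorem:length-inequality}) with the positivity of $L(g)$ for non-peripheral elements (Proposition~\ref{L>0}). First I would unwind what $g$ being non-peripheral gives us: by Proposition~\ref{L>0}, $L(g) > 0$, so set $\varepsilon := L(g) > 0$. Then apply Theorem~\ref{theorem:length-inequality} to this $\varepsilon$ to obtain a constant $\delta(\varepsilon) > 0$ with the property that any element of the fundamental group of a closed hyperbolic $3$--manifold whose scl is at most $\delta(\varepsilon)$ has geodesic length at most $\varepsilon$. Put $\delta_g := \delta(L(g))$; this will be the constant we want.

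Now let $s \in \mathbb{Q}$ be any hyperbolic surgery slope with $p_s(g) \neq 1$ in $\pi_1(K(s))$. Since $K(s)$ is closed hyperbolic, the geodesic length $\ell_{K(s)}(p_s(g))$ is defined, and by the very definition of $L(g)$ as an infimum over such slopes we have $\ell_{K(s)}(p_s(g)) \ge L(g) = \varepsilon$. I would then argue by contradiction: if $\mathrm{scl}_{\pi_1(K(s))}(p_s(g)) \le \delta_g = \delta(\varepsilon)$, Theorem~\ref{theorem:length-inequality} forces $\ell_{K(s)}(p_s(g)) \le \varepsilon$, so in fact $\ell_{K(s)}(p_s(g)) = \varepsilon$; but Calegari's inequality can be applied with a strictly smaller $\varepsilon' < L(g)$ as well, giving $\delta(\varepsilon') > 0$ with $\mathrm{scl} \le \delta(\varepsilon') \Rightarrow \ell \le \varepsilon' < L(g)$, contradicting the infimum bound. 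Cleaner: simply choose $\varepsilon := L(g)/2 > 0$ from the start and set $\delta_g := \delta(L(g)/2)$; then $\mathrm{scl}_{\pi_1(K(s))}(p_s(g)) \le \delta_g$ would give $\ell_{K(s)}(p_s(g)) \le L(g)/2 < L(g)$, directly contradicting $\ell_{K(s)}(p_s(g)) \ge L(g)$. Hence $\mathrm{scl}_{\pi_1(K(s))}(p_s(g)) > \delta_g > 0$, as claimed.

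I do not expect a serious obstacle here, since both inputs are already in hand: the only points requiring care are (i) that the scl appearing in Theorem~\ref{theorem:length-inequality} is the extended scl of \eqref{def2}, which is legitimate because $\pi_1(K(s))$ has finite (indeed, for hyperbolic surgeries, trivial or cyclic) abelianization so every element has a power in the commutator subgroup, and the statement of Theorem~\ref{theorem:length-inequality} is already phrased for arbitrary $g \in \pi_1(M)$; and (ii) that $L(g) > 0$ genuinely requires $g$ to be non-peripheral — this is exactly the content of Proposition~\ref{L>0}, and is where the hypothesis of the theorem is used. The one genuinely substantive ingredient underneath all of this, Proposition~\ref{L>0}, has already been proved via Thurston's hyperbolic Dehn surgery theorem, so the present theorem is a short formal deduction.
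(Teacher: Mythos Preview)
Your proposal is correct and, once you settle on the cleaner choice $\varepsilon = L(g)/2$ with $\delta_g = \delta(L(g)/2)$, it is exactly the paper's own argument: invoke Proposition~\ref{L>0} to get $L(g)>0$, then the contrapositive of Theorem~\ref{theorem:length-inequality} to force $\mathrm{scl}_{\pi_1(K(s))}(p_s(g)) > \delta_g$. Your initial attempt with $\varepsilon = L(g)$ is unnecessarily roundabout (the step invoking a second $\delta(\varepsilon')$ does not quite work since you have no control relating $\delta(\varepsilon')$ to $\delta_g$), but you already noticed and fixed this.
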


\begin{proof}
Since $g$ is a non-peripheral element, 
Proposition~\ref{L>0} shows that $L(g) > 0$. 
Put $\varepsilon = L(g)/2 > 0$. 
Then Theorem~\ref{theorem:length-inequality} shows that there exists a constant 
$\delta(\varepsilon)> 0$ such that 
if $\textrm{scl}_{\pi_1(K(s))}(p_s(g)) \le \delta(\varepsilon)$ for $p_s(g) \ne 1$ in $\pi_1(K(s))$, 
then 
\[
\ell_{K(s)}(p_s(g)) \le \varepsilon = L(g)/2 < L(g).
\] 
Suppose for a contradiction that there exists a hyperbolic surgery slope $s \in \mathbb{Q}$ such that $p_s(g) \ne 1$ in $\pi_1(K(s))$ and $\mathrm{scl}_{\pi_1(K(s))}(p_s(g)) \le \delta(\varepsilon)$. 
Then $\ell_{K(s)}(p_s(g)) \le \varepsilon  < L(g)$, contradicting the definition of $L(g)$. 
Hence, for any hyperbolic surgery slope $s$ with $p_s(g) \ne 1$, 
we have $\textrm{scl}_{\pi_1(K(s))}(p_s(g)) > \delta(\varepsilon) = \delta(L(g)/2)$. 
Let us put $\delta_g = \delta(\varepsilon) = \delta(L(g)/2)$ to obtain the desired bound. 
\end{proof}

\medskip

On the contrary, 
if $g$ is a peripheral element, 
i.e. it is conjugate to $\gamma^d$ for some integer $d > 0$, 
where $\gamma = \mu^a \lambda^b$ for some relatively prime integers $a$ and $b$,
then 
$p_{s}(g) = 1$ and hence $\mathrm{scl}_{\pi_1(K(s))}(p_s(g)) = 0$ for $s = a/b \in \mathbb{Q}$. 
Furthermore, we may prove the following.

\begin{proposition}
\label{scl_bound_peripheral}
Let $K$ be a hyperbolic knot and $g \in G(K)$ a non-trivial peripheral element.  
Then, for any $\varepsilon > 0$, there exists $s \in \mathbb{Q}$ such that 
\[
\mathrm{scl}_{\pi_1(K(s))}(p_s(g)) < \varepsilon\quad \textrm{and}\quad p_s(g) \ne 1 \in \pi_1(K(s)). 
\]
\end{proposition}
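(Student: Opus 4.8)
The plan is to write $g$ as a conjugate of $\gamma^{d}$ with $d>0$, where $\gamma=\mu^{a}\lambda^{b}$ and $\gcd(a,b)=1$; the hypothesis that $g$ is not longitudinal says precisely that $a\neq0$ (equivalently, the slope $a/b$ represented by $\gamma$ is not $0$). Since $\mathrm{scl}$ is a class function and invariant under inversion, it suffices to exhibit a slope $s$ for which $\mathrm{scl}_{\pi_1(K(s))}(p_{s}(\gamma^{d}))$ is small while $p_{s}(\gamma^{d})\neq1$. The key mechanism will be to estimate this scl not through $\gamma$ itself — which is homologically non-trivial in $G(K)$ and hence has no power in $[G(K),G(K)]$ — but through the longitude $\lambda$, which is null-homologous.

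First I would choose the slopes. Since $\gcd(a,b)=1$ and $a\neq0$, the equation $aq-bp=1$ has infinitely many solutions $(p,q)$, namely $(p,q)=(p_{1}+ta,\,q_{1}+tb)$ for $t\in\mathbb{Z}$ and a fixed solution $(p_{1},q_{1})$; these pairs are automatically coprime (a common divisor of $p$ and $q$ divides $aq-bp=1$), and $|p|\to\infty$ as $t\to\pm\infty$ because $a\neq0$. For such $s=p/q$ one has $\Delta(a/b,\,p/q)=|aq-bp|=1$, so a simple closed curve representing $\gamma$ on $\partial E(K)$ is a longitude of the filling solid torus; hence $p_{s}(\gamma)$ is conjugate to $(K^{*}_{s})^{\pm1}$, and $p_{s}(g)$ to $(K^{*}_{s})^{\pm d}$, where $K^{*}_{s}$ is the core of the filling solid torus. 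Since $K$ is hyperbolic there are only finitely many exceptional slopes, so I may further require $K(s)$ to be hyperbolic; then $\pi_1(K(s))$ is torsion free and $K^{*}_{s}\neq1$ (otherwise $\mu\in\langle\!\langle s\rangle\!\rangle$, forcing $s=\infty$ by Theorem~\ref{propertyP}), so $p_{s}(g)\neq1$.

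Next I would carry out the longitude computation. Choosing a dual slope $\ell=\mu^{r}\lambda^{t}$ with $pt-qr=1$, an elementary calculation in $\pi_1(\partial E(K))\cong\mathbb{Z}^{2}$ gives $\lambda=(\mu^{p}\lambda^{q})^{-r}\ell^{\,p}$; since $\mu^{p}\lambda^{q}$ is the filling curve and $\ell$ is a longitude of the filling solid torus, $p_{s}(\mu^{p}\lambda^{q})=1$ and $p_{s}(\ell)=(K^{*}_{s})^{\pm1}$, whence $p_{s}(\lambda)=(K^{*}_{s})^{\pm p}$ and therefore $\mathrm{scl}_{\pi_1(K(s))}(p_{s}(\lambda))=|p|\,\mathrm{scl}_{\pi_1(K(s))}(K^{*}_{s})$ by Lemma~\ref{scl_g^k}. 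Combining this with Lemma~\ref{scl_g^k} applied to $p_{s}(g)$,
\[
\mathrm{scl}_{\pi_1(K(s))}(p_{s}(g))=d\,\mathrm{scl}_{\pi_1(K(s))}(K^{*}_{s})=\frac{d}{|p|}\,\mathrm{scl}_{\pi_1(K(s))}(p_{s}(\lambda)).
\]
Now $\lambda\in[G(K),G(K)]$ and $C:=\mathrm{scl}_{G(K)}(\lambda)<\infty$ (indeed $\mathrm{cl}_{G(K)}(\lambda)\le$ the Seifert genus of $K$, as $\lambda$ bounds a Seifert surface). Applying the monotonicity of scl (Lemma~\ref{monotonicity}) to $p_{s}\colon G(K)\to\pi_1(K(s))$ gives $\mathrm{scl}_{\pi_1(K(s))}(p_{s}(\lambda))\le C$ for \emph{every} $s$, hence $\mathrm{scl}_{\pi_1(K(s))}(p_{s}(g))\le dC/|p|$. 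Given $\varepsilon>0$, it then suffices to pick a hyperbolic slope $s=p/q$ in the family above with $|p|>dC/\varepsilon$, for which $\mathrm{scl}_{\pi_1(K(s))}(p_{s}(g))<\varepsilon$ and $p_{s}(g)\neq1$, as required.

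The main difficulty here is conceptual rather than computational: the obvious attempt to bound $\mathrm{scl}_{\pi_1(K(s))}(p_{s}(g))$ directly by $\mathrm{scl}_{G(K)}(g)$ via monotonicity fails outright, since a non-longitudinal peripheral $g$ has $\mathrm{scl}_{G(K)}(g)=\infty$. The identity $p_{s}(\lambda)=(K^{*}_{s})^{\pm p}$ is precisely the device that fixes this — it converts the homologically non-trivial $\gamma$ into the homologically trivial $\lambda$ at the cost of a factor $|p|$, and dividing that factor back in is exactly what drives the scl to $0$. This also makes transparent why longitudinal elements must be excluded: for $g$ longitudinal one has $\Delta(0,\,p/q)=|p|$, so the factor of $|p|$ gained is immediately cancelled by the wrapping number, and the mechanism produces no decay — consistent with the fact that then $p_{s}(g)$ is a power of a short core geodesic whose scl need not be small.
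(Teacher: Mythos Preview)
Your proof is correct and rests on the same mechanism as the paper's: write $g$ as a conjugate of $\gamma^{d}$, find slopes $s$ for which $p_{s}(\gamma)$ is a high-order root of $p_{s}(\lambda)$, and then combine Lemma~\ref{scl_g^k} with the monotonicity Lemma~\ref{monotonicity} and the finiteness of $\mathrm{scl}_{G(K)}(\lambda)$. The execution differs. The paper takes $s_{n}$ to be the slope of $\gamma^{n}\lambda^{-1}=\mu^{an}\lambda^{bn-1}$, so that $p_{s_{n}}(\gamma)^{n}=p_{s_{n}}(\lambda)$ directly; it then needs a separate argument (via Dirichlet's theorem) that $an$ and $bn-1$ are coprime for infinitely many $n$, and invokes Proposition~\ref{slope} for non-triviality of $p_{s_{n}}(g)$. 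You instead choose slopes $p/q$ at distance~$1$ from $a/b$ and identify both $p_{s}(\gamma)$ and $p_{s}(\lambda)$ as explicit powers of the core $K^{*}_{s}$, with ratio $|p|$. This is a bit more streamlined: coprimality of $(p,q)$ is automatic from $aq-bp=1$, and non-triviality of $p_{s}(g)$ follows immediately once $K(s)$ is hyperbolic, without appeal to Proposition~\ref{slope}. The underlying idea, however, is the same.
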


\begin{proof}
Taking a suitable conjugation and its inverse, we may assume $g = \gamma^d$ for some integer $d > 0$, 
where $\gamma = \mu^a \lambda^b$ for some relatively prime integers $a$ and $b \ge 0$. 
By the assumption $\gamma \ne \lambda$, $a \ne 0$. 

In the following we show 
$\mathrm{scl}_{\pi_1(K(s))}(p_s(\gamma)) < \varepsilon' = \varepsilon/d$, 
because
\[
\mathrm{scl}_{\pi_1(K(s))}(p_s(g)) = \mathrm{scl}_{\pi_1(K(s))}(p_s(\gamma^d)) = d \cdot \mathrm{scl}_{\pi_1(K(s))}(p_s(\gamma)).
\] 
Let us consider $\gamma^{n} \lambda^{-1} = \mu^{an} \lambda^{bn -1} \in \pi_1(\partial(E(K))$. 

\begin{claim}
\label{coprime}
There are infinitely many integers $n$ such that $an$ and $bn - 1$ are relatively prime. 
\end{claim}

\begin{proof}
Without loss of generality, 
we may
assume $a > 0$. 
If $ab \le 1$, then $b = 0$ or $a = b =1$.  
In the former case, 
$a = 1$ and $\mathrm{GCD}(an, bn-1) = (n, -1) = 1$ for all integers $n$. 
In the latter case, $\mathrm{GCD}(an, bn-1) = (n, n-1) = 1$ for all integers $n$.
In the following we assume $ab>1$. 
Take $n= a^{2s}b^{2t-1}$. 
Then $an= a^{2s+1}b^{2t-1}$ and 
$bn-1 = a^{2s}b^{2t}-1= (a^sb^t -1)(a^s b^t +1)$. 
Since $a$ and $b$ are coprime, 
a prime number $p$ dividing $an$ divides also $a$ or $b$. 
However, it does not divide $bn-1$. 
\end{proof}

Let us put $s_n = \frac{an}{bn-1} \in \mathbb{Q}$. 
Then $p_{s_n}(\gamma^{n}\lambda^{-1}) = p_{s_n}(\mu^{an} \lambda^{bn-1}) =1$, 
namely $p_{s_n}(\gamma)^n = p_{s_n}(\lambda)$ in $\pi_1(K(s_n))$. 
Thus 
\[
n \cdot \mathrm{scl}_{\pi_1(K(s_n))}(p_{s_n}(\gamma)) = \mathrm{scl}_{\pi_1(K(s_n))}(p_{s_n}(\gamma)^n) = \mathrm{scl}_{\pi_1(K(s_n))}(p_{s_n}(\lambda)).
\]
Since $\textrm{scl}_{G(K)}(\lambda) = g(K)-\frac{1}{2} < g(K)$ (Remark~\ref{scl_longitude} below), we have
\[
 \mathrm{scl}_{\pi_1(K(s_n))}(p_{s_n}(\gamma)) = \frac{\mathrm{scl}_{\pi_1(K(s_n))}(p_{s_n}(\lambda))}{n} \le \frac{\mathrm{scl}_{G(K)}(\lambda)}{n} <  \frac{g(K)}{n} < \varepsilon'
 \]
 for $n > \frac{g(K)}{\varepsilon'}$. 
This then implies 
\[
\mathrm{scl}_{\pi_1(K(s_n))}(p_{s_n}(g)) < \varepsilon\quad \textrm{for}\ n > \frac{d \cdot g(K)}{\varepsilon}
\]
as mentioned above. 

Finally we observe that $p_{s_n}(g) \ne 1$ for all but finitely many $n$. 
If $p_{s_n}(g) = 1 \in \pi_1(K(s_n))$, 
then $g = \gamma^d \in  \langle\!\langle s_n \rangle\!\rangle$. 
By Proposition~\ref{slope} $s_n$ is represented by the slope element $\gamma$ or $s_n$ is a finite surgery slope. 
For any hyperbolic knot $K$ the number of such surgeries is finite. 

This completes a proof. 
\end{proof}

\begin{remark}
\label{scl_longitude}
We remark that for a non-trivial knot $K$ 
the stable commutator length of $\lambda$ is expressed by the knot genus as follows \cite[Proposition 4.4]{Cal_MSJ}\textup{:} 
\[
\mathrm{scl}_{G(K)}(\lambda)=g(K)-\frac{1}{2} \ge \frac{1}{2}.
\]
Hence, 
\[
\mathrm{scl}_{\pi_1(K(s))}(p_s(\lambda)) \le \mathrm{scl}_{G(K)}(\lambda) =g(K)-\frac{1}{2}.
\]
\end{remark}

\medskip

Even if $L(g) > 0$, 
Theorem~\ref{scl_bound} does not assert that 
$p_s(g) \ne 1$ for ``all'' hyperbolic surgery slopes. 
To find an element $g' \in G(K)$ such that $p_s(g') \ne 1$ for ``all'' hyperbolic surgery slopes, 
we need further procedure.

Using stable commutator length, 
we give an efficient way to construct elements in $G(K)$ which are not trivialized by all hyperbolic Dehn fillings.
See Lemma~\ref{non_vanish_non_finite}, 
whose proof requires the following result. 

For any non-peripheral element $g \in G(K)$, 
$\delta_g$ denotes a positive constant given by Theorem~\ref{scl_bound}. 

\begin{proposition}
\label{construction}
Let $K$ be a hyperbolic knot in $S^3$ and $x$ a non-trivial element in $G(K)$. 
Assume that 
$s$ is a hyperbolic surgery slope, 
and $p_s(x)$ is non-trivial in $\pi_1(K(s))$. 
\begin{enumerate} 
\renewcommand{\labelenumi}{(\arabic{enumi})}
\item If $x$ is non-peripheral \(i.e. $L(x) > 0$\), 
then for an element $y \in [G(K),G(K)]$ and $p > \frac{\mathrm{scl}_{G(K)}(y)}{\delta_x}$, 
$p_s(x^{-p}y) \neq 1$ for the hyperbolic surgery slope $s$. 
\item If $x \in [G(K),G(K)]$, 
then for a non-peripheral element $y \in G(K)$ \(i.e. $L(y) > 0$\) and $p > \frac{\mathrm{scl}_{G(K)}(x)}{\delta_y}$, 
$p_s(x^{-1}y^{p}) \neq 1$ for the hyperbolic surgery slope $s$. 
\end{enumerate}
\end{proposition}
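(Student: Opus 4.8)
The statement is essentially a quantitative triangle-inequality argument combining the monotonicity of scl (Lemma~\ref{monotonicity}) with the uniform lower bound of Theorem~\ref{scl_bound}. I will prove (1) in detail; (2) is entirely symmetric after exchanging the roles of $x$ and $y$ and replacing $x^{-p}y$ by $x^{-1}y^p$. Fix a hyperbolic surgery slope $s$ on which $p_s(x)\neq 1$. The plan is to argue by contradiction: suppose $p_s(x^{-p}y)=1$ in $\pi_1(K(s))$, so that $p_s(x)^p=p_s(y)$ there. Since $x$ is non-peripheral, Theorem~\ref{scl_bound} gives $\mathrm{scl}_{\pi_1(K(s))}(p_s(x))>\delta_x>0$ (the hypothesis $p_s(x)\neq 1$ is exactly what is needed to invoke it). Then by Lemma~\ref{scl_g^k} applied in the group $\pi_1(K(s))$,
\[
\mathrm{scl}_{\pi_1(K(s))}(p_s(y))=\mathrm{scl}_{\pi_1(K(s))}(p_s(x)^p)=p\cdot\mathrm{scl}_{\pi_1(K(s))}(p_s(x))>p\,\delta_x.
\]

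On the other hand, $y\in[G(K),G(K)]$, so $p_s(y)\in[\pi_1(K(s)),\pi_1(K(s))]$ and the monotonicity of scl under the epimorphism $p_s\colon G(K)\to\pi_1(K(s))$ (Lemma~\ref{monotonicity}) yields $\mathrm{scl}_{\pi_1(K(s))}(p_s(y))\le\mathrm{scl}_{G(K)}(y)$. Combining the two inequalities gives $p\,\delta_x<\mathrm{scl}_{G(K)}(y)$, i.e. $p<\mathrm{scl}_{G(K)}(y)/\delta_x$, which contradicts the hypothesis $p>\mathrm{scl}_{G(K)}(y)/\delta_x$. Hence $p_s(x^{-p}y)\neq 1$, proving (1). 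Note one should check at the outset that $x^{-p}y\neq 1$ in $G(K)$ so that the conclusion is meaningful; this is immediate since $p_s(x^{-p}y)=1$ would follow from $x^{-p}y=1$, and we have just shown the former fails — alternatively, if $x^{-p}y=1$ then $x^p=y\in[G(K),G(K)]$ would force $x$ to have finite order in $H_1$, so $x\in[G(K),G(K)]$ and then $\mathrm{scl}_{G(K)}(x^p)=\mathrm{scl}_{G(K)}(y)$ would already give $p<\mathrm{scl}_{G(K)}(y)/\delta_x$ by the same reasoning applied with $s$ replaced using Theorem~\ref{scl_bound}, a contradiction.

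For (2), assume $p_s(x^{-1}y^p)=1$, so $p_s(x)=p_s(y)^p$ in $\pi_1(K(s))$. Here $x\in[G(K),G(K)]$, so $p_s(x)$ lies in the commutator subgroup and $\mathrm{scl}_{\pi_1(K(s))}(p_s(x))\le\mathrm{scl}_{G(K)}(x)$ by Lemma~\ref{monotonicity}. But $p_s(y)^p=p_s(x)$ is then in the commutator subgroup, and since $p_s(y)\neq 1$ (we need $p_s(x)\neq 1$ to guarantee this, which is part of the hypothesis via $p_s(x)$ non-trivial; if $p_s(y)=1$ then $p_s(x)=1$, excluded) we may apply Theorem~\ref{scl_bound} to the non-peripheral element $y$: $\mathrm{scl}_{\pi_1(K(s))}(p_s(y))>\delta_y$. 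Then by Lemma~\ref{scl_g^k}, $\mathrm{scl}_{G(K)}(x)\ge\mathrm{scl}_{\pi_1(K(s))}(p_s(x))=p\cdot\mathrm{scl}_{\pi_1(K(s))}(p_s(y))>p\,\delta_y$, contradicting $p>\mathrm{scl}_{G(K)}(x)/\delta_y$.

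\textbf{Main obstacle.} There is no serious analytic obstacle here — the proof is a short bookkeeping argument once Theorem~\ref{scl_bound} and Lemmas~\ref{monotonicity} and \ref{scl_g^k} are in hand. The only point that requires care is verifying in each case that the non-trivial element to which Theorem~\ref{scl_bound} is applied (namely $p_s(x)$ in (1), $p_s(y)$ in (2)) is genuinely non-trivial in $\pi_1(K(s))$: this follows because the contradiction hypothesis identifies it with a power of the other element's image, and the standing assumption "$p_s(x)$ is non-trivial in $\pi_1(K(s))$" forces that power to be non-trivial. I would also state explicitly that $\delta_x$ (resp. $\delta_y$) is the constant furnished by Theorem~\ref{scl_bound}, which is uniform over all hyperbolic surgery slopes $s$ — this uniformity is exactly why the single bound on $p$ works simultaneously for every such $s$, which is how the proposition will be used in Lemma~\ref{non_vanish_non_finite}.
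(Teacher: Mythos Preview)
Your proof is correct and follows essentially the same route as the paper's: assume $p_s(x^{-p}y)=1$ (resp.\ $p_s(x^{-1}y^p)=1$), equate $p_s(x)^p=p_s(y)$ (resp.\ $p_s(x)=p_s(y)^p$), and chain Theorem~\ref{scl_bound}, Lemma~\ref{scl_g^k}, and Lemma~\ref{monotonicity} to derive the contradictory inequality $p\,\delta_x<\mathrm{scl}_{G(K)}(y)$ (resp.\ $p\,\delta_y<\mathrm{scl}_{G(K)}(x)$). Your additional remarks---that $x^{-p}y\neq 1$ in $G(K)$ and that $\delta_x,\delta_y$ are uniform over hyperbolic slopes---are accurate but not needed for the statement as written, and the paper omits them.
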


\begin{proof}
$(1)$\  
Assume to the contrary that  $p_s(x^{-p}y) = 1$, i.e. $p_s(x^p) = p_s(y)$. 
Then we have 
\begin{align*}
\textrm{scl}_{G(K)}(y) & < p\, \delta_x\\
&< p\ \textrm{scl}_{\pi_1(K(s))}(p_s(x)) \quad (\mbox{Theorem~\ref{scl_bound}}) \\
& = \textrm{scl}_{\pi_1(K(s))}(p_s(x)^p) \quad (\mbox{Lemma~\ref{scl_g^k}}) \\
& = \textrm{scl}_{\pi_1(K(s))}(p_s(x^{p}))\\
& = \textrm{scl}_{\pi_1(K(s))}(p_s(y))\\
& \leq \textrm{scl}_{G(K)}(y)\quad (\mbox{Lemma~\ref{monotonicity}}). 
\end{align*}
This is a contradiction.

$(2)$\  
Assume for a contradiction that $p_s(x^{-1}y^p) = 1$, i.e. $p_s(x) = p_s(y^p)$. 
Since $p_s(x)$ is non-trivial, $p_s(y^p) = p_s(x) \ne 1$, in particular $p_s(y) \ne 1$. 
Thus we have
\begin{align*}
\textrm{scl}_{G(K)}(x) & <  p\, \delta_y \\
& < p \ \textrm{scl}_{\pi_1(K(s))}(p_s(y)) \quad (\mbox{Theorem~\ref{scl_bound}})\\
& = \textrm{scl}_{\pi_1(K(s))}(p_s(y)^p)\quad (\mbox{Lemma~\ref{scl_g^k}})\\
& = \textrm{scl}_{\pi_1(K(s))}(p_s(y^p))\\
& = \textrm{scl}_{\pi_1(K(s))}(p_s(x))\\
& \leq  \textrm{scl}_{G(K)}(x)\quad (\mbox{Lemma~\ref{monotonicity}}). 
\end{align*}
This is a contradiction. 
\end{proof}

\medskip

For hyperbolic knots any power of a non-peripheral element is also non-peripheral. 
(This is not true for torus knots. 
Actually, some power of a non-peripheral element represented by an exceptional fiber is represented by a regular fiber, and thus peripheral.)
For later convenience we generalize this to the following result. 

\begin{lemma}
\label{product_non-peripheral}
Let $K$ be a hyperbolic knot and $g, h$ be elements in $G(K)$. 
Assume that $g$ is non-peripheral. 
Then we have the following. 
\begin{enumerate}
\renewcommand{\labelenumi}{(\arabic{enumi})}
\item
$g^n h$ are non-peripheral except for only finitely many integers $n$. 
\item
If $h$ is peripheral, then there are infinitely many integers $n$ such that $g^n h$ are mutually non-conjugate elements in $G(K)$. 
\end{enumerate}
\end{lemma}

\begin{proof}
(1) Consider a holonomy representation $\rho\colon G(K) \to \mathrm{PSL}(2, \mathbb{C})$. 
We denote an element in
$\mathrm{PSL}(2, \mathbb{C}) = \mathrm{SL}(2, \mathbb{C})/ \{ \pm I \}$ 
by 
$\begin{bmatrix}
a & b \\
c & d
\end{bmatrix}$
to distinguish from 
a matrix 
$\begin{pmatrix}
a & b \\
c & d
\end{pmatrix}$ 
in $\mathrm{SL}(2, \mathbb{C})$.

Recall that the trace is invariant conjugation and $\gamma$ is peripheral if and only if $\mathrm{tr}(\rho(\gamma) )= \pm 2$. 
Since $g$ is non-peripheral, 
we may take the holonomy so that 
$\rho(g) =  \begin{bmatrix}
	\alpha & 0 \\[2pt]
	0 & \alpha^{-1}
\end{bmatrix}$ and 
$\rho(h) =  \begin{bmatrix}
	x & y \\[2pt]
	z & u
\end{bmatrix}$, 
where $|\alpha| > 1$. 
Then  $\mathrm{tr}(\rho(g^{n}h)) = \alpha^{n}x+ \alpha^{-n}u$.  
If $x = u = 0$, 
then $\mathrm{tr}(\rho(g^{n}h) ) =  0$ independent of $n$, 
and thus $g^{n} h$ are non-peripheral for any integer $n$. 
So we may assume $x \ne 0$ or $u \ne 0$. 
Then it is easy to see that $\alpha^{n}x+ \alpha^{-n}u$ can be $\pm 2$ for only finitely many integers $n$, 
and hence $g^{n}h$ are non-peripheral except for these finitely many integers.  

(2) By the assumption $\mathrm{tr}(\rho(h)) = \pm 2 \ne 0$, 
in particular $x \ne 0$ or $u \ne 0$. 
Then $\mathrm{tr}(\rho(g^{n}h)) = \alpha^{n}x+ \alpha^{-n}u \to \infty$ (if $x \ne 0$) or $0$ (if $x = 0$). 
Thus there are infinitely many integers $n$ such that $g^n h$ are mutually non-conjugate. 
\end{proof}

\begin{remark}
\label{commute}
In the proof of Lemma~\ref{product_non-peripheral} $\mathrm{tr}(\rho(h g^n)) = \mathrm{tr}(\rho(g^n h))$, 
so $(1)$ and $(2)$ hold for $h g^n$ as well.  
\end{remark}

\section{Elements $g$ with $\mathcal{S}_K(g) = \emptyset$} 
\label{homologically0}

By definition, an element $g \in G(K)$ satisfying $\mathcal{S}_K(g) = \emptyset$ survives for all non-trivial Dehn fillings, 
so it is strong against Dehn fillings.   

The simplest such an element is a meridian $\mu$ or its conjugate. 
It should be emphasized that this fact is already highly non-trivial, 
because it is equivalent to Property P Theorem \cite{KM}.  
Furthermore, any non-trivial powers of the meridian $\mu$ also satisfies $\mathcal{S}_K(\mu^n) = \emptyset$ ($n \ne 0$) 
as long as $K$ has no finite surgery slopes. 
See Proposition~\ref{slope}. 

On the other hand, since these elements are homologically non-trivial,
it is not clear whether or not we have a homologically trivial element $g\, (\in [G(K), G(K)])$ with $\mathcal{S}_K(g) = \emptyset$. 

\begin{example}[Figure-eight knot]
\label{fig-eight}
Let $K$ be the figure-eight knot with 
\[
G(K) = \langle a, b \mid ab^{-1}a^{-1}ba = bab^{-1}a^{-1}b \rangle,
\] 
where $a$ and $b$ are meridians as indicated by Figure~\ref{figure-eight_knot_a_b}. 
If $p_r([a,b])=1$ for some $r \in \mathbb{Q}$, 
then $\pi_1(K(r))$ must be cyclic.
This is impossible, because $K$ does not admit a cyclic surgery.
Thus the commutator $[a,b]$ satisfies $\mathcal{S}_K([a, b]) = \emptyset$. 

A similar argument applies whenever $G(K)$ is generated by two elements and $K$ has no cyclic surgeries. 
\end{example}

\begin{figure}[htb]
\centering
\includegraphics[width=0.2\textwidth]{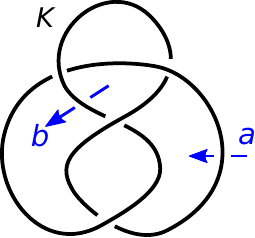}
\caption{$\mathcal{S}_K([a, b]) = \emptyset$ for the figure-eight knot $K$.} 
\label{figure-eight_knot_a_b}
\end{figure}

If $s$ is a cyclic surgery slope, but it is not a finite surgery slope, 
then $\pi_1(K(s)) \cong \mathbb{Z}$. 
However, this happens only when $K$ is the unknot and $s = 0$ \cite{GabaiIII}. 
Thus for non-trivial knots cyclic surgeries are finite surgeries. 

If $K$ has a cyclic surgery slope $s$, 
then $\pi_1(K(s)) \cong G(K)/ \langle\!\langle s \rangle\!\rangle$ is abelian, 
hence $[G(K),G(K)] \subset \langle\!\langle s \rangle\!\rangle$. 
Thus for every element $g$ in $[G(K),G(K)]$, 
$p_s(g) = 1$ in $\pi_1(K(s))$. 

The goal of this section is to prove Theorem~\ref{hyperbolic_persistent_[G,G]_noncyclic}. 

\begin{thm_hyperbolic_persistent_[G,G]_noncyclic}[Elements with $\mathcal{S}_K(g) = \emptyset$]
Let $K$ be a hyperbolic knot in $S^{3}$.  
Then there exist infinitely many, mutually non-conjugate elements $g \in [G(K),G(K)]$ 
such that $p_s(g) \neq 1$ in $\pi_1(K(s))$ for all non-cyclic surgery slopes $s \in \mathbb{Q}$. 
In particular, 
if $K$ has no cyclic surgery, then there exist infinitely many, mutually non-conjugate elements $g \in [G(K),G(K)]$ such that 
$\mathcal{S}_K(g) = \emptyset$. 
\end{thm_hyperbolic_persistent_[G,G]_noncyclic}

\begin{proof}
The longitude $\lambda$ of $K$ is the unique slope element which belongs to $[G(K), G(K)]$.  
As a particular case of Proposition~\ref{slope}, 
we have 
\medskip

\begin{lemma}
\label{p(lambda^q)=1}
If $p_s(\lambda^q) = 1$ for some $q \ne 0$, 
then $s=0$ or $s$ is a finite surgery slope. 
\end{lemma}

\begin{proof}
The assumption $p_s(\lambda^q) = 1$ implies that 
$\lambda^q \in \langle\!\langle s \rangle\!\rangle$. 
By Proposition~\ref{slope} $s = 0$ or $s$ is a finite surgery slope. 
\end{proof}

Let $\Sigma$ be a minimal genus Seifert surface of a hyperbolic knot $K$.  
Take a non-trivial element $y \in \pi_1(\Sigma)$ so that $[y] \ne 0$ in $H_1(\Sigma)$, 
equivalently $y \not\in [\pi_1(\Sigma), \pi_1(\Sigma)]$. 
(In particular, $y$ is not conjugate into $\pi_1(\partial \Sigma)$ in $\pi_1(\Sigma)$.)
Let $i\colon \Sigma \rightarrow E(K)$ be the inclusion map. 
Since $\Sigma$ is incompressible, 
$i_* \colon \pi_1(\Sigma) \to G(K)$ is injective. 
So we may regard $\pi_1(\Sigma) \subset G(K)$ and 
use the same symbol $y$ to denote $i_*(y) \in \pi_1(\Sigma) \subset G(K)$. 

\begin{claim}
\label{non-conjugate_non-peripheral}
$y$ is non-peripheral in $G(K)$, 
i.e. 
it is not conjugate into $\pi_1(\partial E(K))$ in $G(K)$. 
\end{claim}

\begin{proof}
Assume for a contradiction that $y$ is conjugate to $y' \in \pi_1(\partial E(K))$ in $G(K)$. 
Since $y \in [G(K), G(K)]$, so is $y'$, 
and we may assume a representative of $y'$ lies in $\partial \Sigma$. 
Thus we write $y' = \lambda^{\ell}$ for some integer $\ell \ne 0$. 
So $y$ is conjugate to $\lambda^{\ell}$. 

Let $\widehat{\Sigma}$ be the non-separating closed surface in $K(0)$ obtained from $\Sigma$ by capping off 
$\partial \Sigma$ by a meridian disk of the filling solid torus; $\mathrm{genus}(\Sigma) = \mathrm{genus}(\widehat{\Sigma})$. 
Then the inclusion $\Sigma \to \widehat{\Sigma}$ induces an isomorphism $H_1(\Sigma) \to H_1(\widehat{\Sigma})$.  
Since $[y] \ne 0 \in H_1(\Sigma)$, we also have $[y] \ne 0$ in $H_1(\widehat{\Sigma})$.  
Hence, $y$ remains non-trivial in $\pi_1(\widehat{\Sigma})$. 
Assume for a contradiction that $\widehat{\Sigma}$ is compressible. 
Then apply a compression to $\widehat{\Sigma}$ to obtain a non-separating surface in $K(0)$ whose genus is strictly smaller than that of $\widehat{\Sigma}$. 
This contradicts  \cite[Corollary~8.3]{GabaiIII}. 
Hence, $\widehat{\Sigma}$ is incompressible in $K(0)$,  
in particular, $y$ is non-trivial in $\pi_1(K(0))$. 

On the other hand, since $y$ is conjugate to $\lambda^{\ell}$ in $G(K)$, 
$y$ would be trivial in $\pi_1(K(0))$, a contradiction. 
Therefore $y$ is not conjugate into $\pi_1(\partial E(K))$ in $G(K)$.   
\end{proof}

\begin{lemma}
\label{non_finite}
Let $s$ be a non-finite surgery slope and $q$ a positive integer. 
Assume that $p_s(\lambda^q) = p_s(y)^p$ for some integer $p > 0$. 
Then we have\textup{:} 
\begin{enumerate}
\renewcommand{\labelenumi}{(\arabic{enumi})}
\item
$p_s(\lambda^q) \ne 1$, and 
\item 
$p$ is the unique integer which satisfies $p_s(\lambda^q) = p_s(y)^p$.  
We denote this integer $p$ by $p_{q, s}$. 
\end{enumerate}
\end{lemma}

\begin{proof}
(1)\ Suppose for a contradiction that $p_s(\lambda^q) = 1$. 
Then, since $s$ is not a finite surgery slope, 
Lemma~\ref{p(lambda^q)=1} shows that $s = 0$. 

Let $\widehat{\Sigma}$ be a non-separating closed surface in $K(0)$ 
obtained from $\Sigma$ by capping off $\partial \Sigma$ by a meridian disk of the filling solid torus as in the proof of Claim~\ref{non-conjugate_non-peripheral}.
The choice of $y$, $[y] \ne 0 \in H_1(\Sigma)$, 
assures that $[y]$ and $[y^p]$ are also non-trivial in $H_1(\widehat{\Sigma})$, 
and hence $y$ and $y^p$ are non-trivial in $\pi_1(\widehat{\Sigma})$.  
Since $\widehat{\Sigma}$ is incompressible in $K(0)$ as observed in the proof of Claim~\ref{non-conjugate_non-peripheral}, 
$p_0(y^p) \ne 1$ in $\pi_1(K(0))$. 
So we have $1 = p_0(\lambda) = p_0(\lambda^q) = p_0(y)^p = p_0(y^p) \ne 1$ in $\pi_1(K(0))$, 
a contradiction. 

\smallskip

(2)\ 
Suppose for a contradiction that $p_s(y)^{p'} = p_s(\lambda^q)  = p_s(y)^{p}$ for $p' \ne p$. 
Then $p_{s}(\lambda^q)^{p'} = (p_s(y)^p)^{p'} = (p_s(y)^{p'})^p = p_s(\lambda^q)^p$, 
i.e. $p_s(\lambda^{(p' - p)q}) = 1$ ($(p' - p)q \ne 0$).  
So by Lemma~\ref{p(lambda^q)=1} the non-finite surgery slope $s$ must be $0$. 
Also we have $p_0(y)^{p' - p} = 1$. 
As shown in (1) $p_{0}(y) \ne 1$, 
so $|p' - p| \ne 1$, and hence $|p' - p| \ge 2$. 
Thus $p_{0}(y)$ is a torsion element in $\pi_1(K(0))$. 
However since $K(0)$ is irreducible \cite{GabaiIII} and $|\pi_1(K(0))| = \infty$, 
$\pi_1(K(0))$ has no torsion. 
This is a contradiction. 
\end{proof}

Following Thurston's hyperbolic Dehn surgery theorem 
there are at most finitely many non-hyperbolic surgery slopes, 
in particular, 
there are at most finitely many non-hyperbolic, non-finite surgery slopes $s_1, \dots, s_m$. 
For each such surgery slope $s_i$ ($1 \le i \le m$), 
at most one integer $p_{q, s_i} \ge 1$ satisfies $p_{s_i}(\lambda^q)=p_{s_i}(y)^{p_{q, s_i}}$ 
by Lemma~\ref{non_finite}(2). 

By Claim~\ref{non-conjugate_non-peripheral} and Proposition~\ref{L>0}, we see that $L(y) > 0$ for the non-peripheral element $y \in \pi_1(\Sigma) \subset G(K)$. 
Then for $\delta_y > 0$ given in Theorem~\ref{scl_bound}, 
we may take $p$ so that $p \delta_y > \textrm{scl}_{G(K)}(\lambda^q)$, 
equivalently $p > \dfrac{\textrm{scl}_{G(K)}(\lambda^q)}{\delta_y}$. 
The right hand side is a constant (once we fix a positive integer $q$), 
which we denote by $C_{q} > 0$.

\begin{lemma}
\label{non_vanish_non_finite}
For any non-finite surgery slope $s$ we have 
$p_s(\lambda^{-q} y^{p}) \ne 1$ if $p > C_{q}$ and $p \ne p_{q, s_1}, \dots, p_{q, s_m}$. 
\end{lemma}

\begin{proof}
Assume for a contradiction that $p_s(\lambda^{-q} y^{p})=1$, 
i.e. $p_s(\lambda^q) = p_s(y^p) = p_s(y)^p$ for some non-finite surgery slope $s$. 

\smallskip

Since $p_s(\lambda^q) \ne 1$ (Lemma~\ref{non_finite}), 
which also implies $p_s(y)^p \ne 1$, in particular $p_s(y) \ne 1$. 
Let us divide our arguments into two cases according as $s$ is a hyperbolic surgery slope or not. 
(Note that $0$--slope is a non-finite surgery slope for homological reason, and obviously 
$p_0(\lambda^q) = 1$. 
So Lemma~\ref{non_finite} shows that $s \ne 0$, i.e. $p_0(\lambda^{-q} y^{p}) \ne 1$.)

\medskip

\noindent
\textbf{Case 1.}\ $s$ is a hyperbolic surgery slope. 

Since $p_s(y) \ne 1$,  
Theorem~\ref{scl_bound} shows that $\textrm{scl}_{\pi_1(K(s))}(p_s(y)) > \delta_y > 0$. 
Following the assumption $p > \dfrac{\textrm{scl}_{G(K)}(\lambda^q)}{\delta_y}$, 
we may apply Proposition~\ref{construction}(2) with 
$x = \lambda^q$ to see that $p_s(\lambda^{-q}y^p) \ne 1$.  
This contradicts the initial assumption of the proof of Lemma~\ref{non_vanish_non_finite}. 

\medskip

\noindent
\textbf{Case 2.}\ 
$s$ is a non-hyperbolic surgery slope. 

Then $s = s_i$ for some $1 \le i \le m$, and
$p_{s_i}(\lambda^q) \ne p_{s_i}(y)^p$ since $p \neq p_{q, s}$ (Lemma~\ref{non_finite}).  
Thus $p_s(\lambda^{-q}y^p) \ne 1$. 
This contradicts the initial assumption of the proof of Lemma~\ref{non_vanish_non_finite}. 
\end{proof}

\medskip

Let us turn to the case where $s$ is a finite surgery slope.  
To consider finite surgeries we need to pay attention for the choice of 
$y \in \pi_1(\Sigma)$ and a positive integer $q$. 
(Accordingly we will re-take $y \in \pi_1(\Sigma) \subset G(K)$ and the positive integer $q$ in the above as well.)

Let $\mathcal{F} = \{ f_1, \dots, f_n\} \subset \mathbb{Q}$ be the set of finite surgery slopes which are not cyclic surgery slopes.

\begin{claim}
\label{number_finite}
$n \le 2$.
\end{claim}

\begin{proof}
Ni and Zhang \cite{Ni-Zhang-Finite} show that 
a hyperbolic knot in $S^{3}$ has at most three finite surgeries, and thus $n \le 3$; 
furthermore, if $n = 3$, then $K$ is the pretzel knot $P(-2, 3, 7)$. 
Recall that $P(-2, 3, 7)$ has two cyclic surgeries and one non-cyclic finite surgery. 
Hence $n \le 2$. 
\end{proof}

\medskip 

It is known that any knot admitting a non-trivial finite surgery must be a fibered knot,
and hence if $\mathcal{F} \ne \emptyset$, 
then $K$ is a fibered knot.  
See \cite{Ni} (cf. \cite{Ghi,Juh}).

Write $n_{f_i} = |\pi_1(K(f_i))|$ for each finite surgery slope $f_i$, 
and let $q_0$ be the least common multiples of $n_{f_1}, \dots, n_{f_n}$. 
Then for every element $g \in G(K)$,  
we have $p_{f_i}(g)^{q_0} = 1$ for $1 \le i \le n$. 

We begin by observing the following. 

\begin{lemma}
\label{non-peripheral}
There exists an element $y \in \pi_1(\Sigma) \subset G(K)$ with 
$0 \neq [y] \in H_1(\Sigma)$ which satisfies 
$p_{f_i}(y) \ne 1$ in $\pi_1(K(f_i))$ for any finite \(non-cyclic\) surgery slope $f_i$.  
\end{lemma}

\begin{proof}
Since $K$ is fibered, $\pi_1(\Sigma)=[G(K),G(K)]$. 
For each non-cyclic (non-abelian) surgery slope $f_i\in \mathcal{F}$, 
we have $[G(K),G(K)] - \langle\!\langle f_i \rangle\!\rangle 
= \pi_1(\Sigma) - \langle\!\langle f_i \rangle\!\rangle \ne \emptyset$. 
In fact, if $[G(K),G(K)] - \langle\!\langle f_i \rangle\!\rangle = \emptyset$, 
then $ \langle\!\langle f_i \rangle\!\rangle \supset [G(K),G(K)]$, and hence 
$\pi_1(K(f_i))$ is abelian, in particular cyclic. 
This contradicts $f_i$ being a non-cyclic surgery slope. 

Take a non-trivial element $y_i \in \pi_1(\Sigma) - \langle\!\langle f_i \rangle\!\rangle \ne \emptyset$. 
Then $p_{f_i}(y_i) \ne 1$.  
If $\mathcal{F} = \{ f_1 \}$, then put $y = y_1$. 
If $\mathcal{F} = \{ f_1, f_2 \}$, 
Then we put $y$ as follows;
\[ y= \begin{cases}
 y_1 & \mbox{ if } p_{f_2}(y_1)\neq 1, \\
 y_2 & \mbox{ if } p_{f_1}(y_2)\neq 1, \\
y_1 y_2 & \mbox{ if } p_{f_1}(y_2)=p_{f_2}(y_1)=1.
\end{cases}\]

Thus we have a non-trivial element $y \in \pi_1(\Sigma)$ with $p_{f_i}(y) \ne 1$ for $f_i \in \mathcal{F}$. 
Following Claim~\ref{non-conjugate_non-peripheral}, if $y$ is not conjugate into $\pi_1(\partial \Sigma)$ in $\pi_1(\Sigma)$, 
then it is non-peripheral in $G(K)$, and $y$ is a desired element. 

Suppose that $y \in \pi_1(\Sigma)$ is homologically trivial in $H_1(\Sigma)$, i.e. $[y]=0 \in H_1(\Sigma)$. 
Then $y \in [\pi_1(\Sigma), \pi_1(\Sigma)]$.
We replace $y$ with an element $y' \in \pi_1(\Sigma)$ 
which satisfies $[y] \ne 0$ in $H_1(\Sigma)$ and $p_{f_i}(y') \ne 1$ as follows. 
Take $x \not\in [\pi_1(\Sigma), \pi_1(\Sigma)]$ arbitrarily. 
Then by the choice of $q_0$ we have $p_{f_i}(x^{q_0}) = p_{f_i}(x)^{q_0} = 1$, 
and hence $p_{f_i}(x^{q_0}y) = p_{f_i}(x^{q_0}) p_{f_i}(y) = p_{f_i}(y) \ne 1$. 
It remains to show that 
$[x^{q_0}y] \ne 0$ in $H_1(\Sigma)$.  
Assume for a contradiction that $[x^{q_0}y] = 0$ in $H_1(\Sigma)$. 
Then $q_0 [x] + [y] = 0$ in $H_1(\Sigma)$. 
So $q_0 [x] = -[y] = 0$ in $H_1(\Sigma)$, 
which also implies $[x] = 0 \in H_1(\Sigma)$. 
Thus $x \in [\pi_1(\Sigma), \pi_1(\Sigma)]$, contradicting the choice of $x$. 
Hence, $[x^{q_0}y] \ne 0$ in $H_1(\Sigma)$, 
and Claim~\ref{non-conjugate_non-peripheral} shows that $y' = x^{q_0}y$ is non-peripheral in $G(K)$. 
Hence $y'=x^{q_0}y$ gives the desired element. 
\end{proof}

\medskip

Let us take $y \in \pi_1(\Sigma)$ as in Lemma~\ref{non-peripheral}, and put 
\[
\mathcal{P} = \{ p \mid p > C_{q_0}, \ 
p  \ne p_{q_0, s_1}, \dots, p_{q_0, s_m}\ \textrm{and}\ p \equiv  1\ \textrm{mod}\ q_0 \}. 
\]

Obviously $\mathcal{P}$ contains infinitely many integers $p$. 

We prove that $p_s(\lambda^{-q_0} y^{p}) \ne 1$ for all non-cyclic surgery slopes $s \in \mathbb{Q}$ and $p\in \mathcal{P}$.  
If $s$ is not a finite surgery slope, 
then Lemma~\ref{non_vanish_non_finite} assures that $p_s(\lambda^{-q_0} y^{p}) \ne 1$. 

Suppose that $s$ is a non-cyclic, finite surgery slope $f_i$. 
By definition of $q_0$, $q_0$ is a multiple of $n_{f_i}$ for $f_i \in \mathcal{F}$. 
So we have
\[
p_{f_i}(\lambda)^{q_0} = 1 \quad
\mathrm{and}\quad  
p_{f_i}(y)^{q_0} = 1.
\] 

Thus 
\[
p_{f_i}(\lambda^{-q_0} y^{p}) 
= p_{f_i}(\lambda)^{-q_0} p_{f_i}(y)^{tq_0 + 1} 
= p_{f_i}(y)^{t q_0 + 1}
= p_{f_i}(y) 
\ne 1.
\] 
 
\medskip

Finally, we show that there are infinitely many non-peripheral such elements up to conjugation. 

\begin{lemma}
\label{infinite_conjugacy}
There are infinitely many integers $p_i \in \mathcal{P}$ such that 
$\lambda^{-q_0}y^{p_i}$ are non-peripheral and mutually non-conjugate elements in $G(K)$. 
\end{lemma}

\begin{proof}
Recall that $\lambda$ is peripheral and $y$ is non-peripheral. 
Then Lemma~\ref{product_non-peripheral} (1) and Remark~\ref{commute} show that 
$\lambda^{-q_0}y^{p_i}$ are non-peripheral all but finitely many $p_i$.  
Furthermore, following Lemma~\ref{product_non-peripheral} (2) and Remark~\ref{commute}
we see that $\lambda^{-q_0}y^{p_i}$ are mutually non-conjugate for infinitely many integers $p_i$. 
\end{proof}

This completes a proof of Theorem~\ref{hyperbolic_persistent_[G,G]_noncyclic}.  
\end{proof}

\medskip
\section{Separation Lemma}
\label{finite_separation}

The goal of this section is to prove ``Separation Lemma'' for the function $\mathcal{S}_K \colon G(K) \to 2^{\mathbb{Q}}$. 

\begin{lemma_separation}[Separation Lemma]
Let $K$ be a hyperbolic knot.    
Let $\mathcal{R} = \{ r_1, \dots, r_n \}$ and $\mathcal{S} = \{ s_1, \dots, s_m \}$ be any finite, non-empty subsets of $\mathbb{Q}$ such that 
$\mathcal{R} \cap \mathcal{S} = \emptyset$. 
Assume that $\mathcal{S}$ does not contain a Seifert surgery slope.  
Then there exists an element $g \in [G(K), G(K)] \subset G(K)$ 
such that $\mathcal{R} \subset \mathcal{S}_K(g) \subset \mathbb{Q} - \mathcal{S}$. 
\end{lemma_separation}

\begin{proof}
We first observe: 

\begin{claim}
\label{at_most_two_torsion_surgeries}
$\mathcal{S}$ contains at most two torsion surgery slopes.
\end{claim}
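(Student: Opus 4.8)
The plan is to show that, under the standing hypothesis that $\mathcal{S}$ contains no Seifert surgery slope, every torsion surgery slope in $\mathcal{S}$ is forced to be a reducing surgery slope, and then to invoke the known bound on the number of reducing surgery slopes of a knot in $S^3$.

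First I would rule out finite surgery slopes from $\mathcal{S}$. If $\pi_1(K(r))$ is finite, then $K(r)$ is a Seifert fiber space (a closed $3$--manifold with finite fundamental group is a Seifert fiber space, by the Geometrization theorem), so $r$ is a Seifert surgery slope, contrary to the hypothesis on $\mathcal{S}$. Hence $\mathcal{S}$ contains no finite surgery slope, and by Lemma~\ref{torsion_slope} every torsion surgery slope lying in $\mathcal{S}$ must be a reducing surgery slope.

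It then remains to bound the number of reducing surgery slopes of $K$. Here I would appeal to the structure theory of reducible Dehn surgery: by the results of Gordon and Luecke \cite{GLu2}, every reducing surgery slope of a non-trivial knot in $S^3$ is integral, and any two distinct reducing surgery slopes $r$, $r'$ satisfy $\Delta(r, r') = |r - r'| \le 1$. A set of integers any two of whose members differ by at most $1$ is contained in $\{n, n+1\}$ for some $n \in \mathbb{Z}$, hence has at most two elements. Therefore $K$ has at most two reducing surgery slopes, so $\mathcal{S}$ contains at most two of them, and by the previous paragraph these exhaust the torsion surgery slopes that $\mathcal{S}$ can contain. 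This proves the Claim.

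The one point requiring care is quoting the two facts about reducible surgeries — integrality of reducing slopes and the distance-one bound between two distinct reducing slopes — in precisely the form needed; the remainder is immediate from Lemma~\ref{torsion_slope} and Geometrization. Note that the hypothesis that $K$ is not a torus knot plays no role in this Claim; it will enter only later, in the remainder of the proof of Theorem~\ref{separation}.
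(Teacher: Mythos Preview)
Your argument is correct and follows essentially the same route as the paper: rule out finite surgery slopes from $\mathcal{S}$ via Geometrization, conclude via Lemma~\ref{torsion_slope} that any torsion slope in $\mathcal{S}$ is a reducing slope, and then bound the number of reducing slopes by two. The paper simply cites \cite{GLreducible} for the last step, whereas you unpack it into integrality plus the distance-one bound; this is the content of the same Gordon--Luecke paper, so there is no real difference.

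One small correction: the reference \cite{GLu2} in this paper is ``Knots are determined by their complements'', which does not contain the integrality or distance results for reducing slopes. You should cite \cite{GLreducible} (Gordon--Luecke, \emph{Reducible manifolds and Dehn surgery}, Topology 1996) instead.
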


\begin{proof}
Since $\mathcal{S}$ has no Seifert surgery slopes, torsion surgery slopes in $\mathcal{S}$ must be reducible surgery slopes.
It is known by \cite{GLreducible} that every knot has at most two reducing surgery slopes.
\end{proof}

In the following, without loss of generality, by re-indexing slopes in $\mathcal{S}$, 
we may assume the following. 

\medskip

$\bullet$ If $\mathcal{S}$ has a single torsion surgery slope, then $s_1$ is such a slope. 
If $\mathcal{S}$ contains exactly two torsion surgery slopes, 
then $s_1$ and $s_2$ are such slopes. 

In particular, we assume that $s_i$ $(i \ge 3)$ are not torsion surgery slopes. 

\medskip

For convenience, 
we sketch the construction of a desired element $g$ which satisfies  
$\mathcal{R} \subset \mathcal{S}_K(g) \subset \mathbb{Q} - \mathcal{S}$.  
We first find an element $g_{0,\, m}$ with 
$\mathcal{S}_K(g_{0,\, m}) \subset \mathbb{Q} - \mathcal{S}$, where $\mathcal{S}_K(g_{0,\, m})$ would be empty (Step 1). 
Then assuming we have an element $g_{k,\, m}$ with 
$\{ r_1, \dots, r_k \} \subset \mathcal{S}_K(g_{k,\, m}) \subset \mathbb{Q} - \mathcal{S}$, 
we will inductively construct $g_{k+1,\, m}$ 
satisfying $\{ r_1, \dots, r_k, r_{k+1} \} \subset \mathcal{S}_K(g_{k+1,\, m}) \subset \mathbb{Q} - \mathcal{S}$ (Step 2), 
and inductively we construct an element $g = g_{n,\, m}$ with the desired property 
$\{ r_1, \dots, r_n \} \subset \mathcal{S}_K(g_{n,\, m}) \subset \mathbb{Q} - \mathcal{S}$. 
The second step requires technical step-by-step modifications of $g_{k,\, m}$.   

\medskip

In the following, 
we write $\mathcal{R}_k = \{ r_1, \dots, r_k \} \subset \{ r_1, \dots, r_n \} = \mathcal{R}$. 
When $k = 0$, $\mathcal{R}_0$ is understood to be the emptyset. 

\medskip

\noindent
\textbf{Step 1.}\ 
The Geometrization theorem \cite{Pe1,Pe2,Pe3} says that a closed $3$--manifold with cyclic fundamental group is a lens space, 
which is also a Seifert fiber space. 
So by the assumption $\mathcal{S}$ does not contain a cyclic surgery slope. 

Let us apply Theorem~\ref{hyperbolic_persistent_[G,G]_noncyclic} to find an element $g_{0,\, m} \in [G(K),G(K)]$ so that 
$p_r(g_{0,\, m}) \ne 1$ whenever $r$ is not a cyclic surgery slope. 
Then obviously $g_{0,\, m}$ satisfies 
\[
\emptyset = \mathcal{R}_0 \subset \mathcal{S}_K(g_{0,\, m}) \subset \mathbb{Q} - \mathcal{S}. 
\]

\medskip

\noindent
\textbf{Step 2.}\ 
Assume that there exists an element $g_{k,\, m} \in [G(K),G(K)]$ such that 
\[
\mathcal{R}_k  = \{r_1, \ldots, r_{k}\} \subset \mathcal{S}_K(g_{k,\, m}) \subset \mathbb{Q} - \mathcal{S},
\]
\[
\mathrm{namely,}\ p_{r_i}(g_{k,\, m}) = 1\ (1 \le i \le k)\quad \textrm{and}\quad   p_{s_i}(g_{k,\, m}) \ne 1\ (1 \le i \le m). 
\]
\medskip

Then we will construct an element $g_{k+1,\, m} \in [G(K), G(K)]$ such that 
\[
\mathcal{R}_{k+1}  = \{r_1,\ldots,r_{k}, r_{k+1} \} \subset \mathcal{S}_K(g_{k+1,\, m}) \subset \mathbb{Q} - \mathcal{S}.
\]

\medskip

A proof of the second step requires modifications of $g_{k,\, m}$ several times to get $g_{k+1,\, m}$. 
First we observe the following useful fact.  

\begin{claim}
\label{r_k}
For any $a \in G(K)$, 
let us put 
\[
h_{k+1,\, a} = [a^{-1} g_{k,\, m} a,\ r_{k+1}]. 
\] 
Then $\{ r_1, \dots, r_k, r_{k+1} \} \subset \mathcal{S}_K(h_{k+1,\, a})$. 
\end{claim}

We remark that $r_{k+1}$ is used as a rational number or a group element representing the surgery slope
in the remainder of this section.

\begin{proof}
\[
p_{r_i}(h_{k+1,\, a}) 
= [p_{r_i}(a)^{-1}p_{r_i}(g_{k,\, m})p_{r_i}(a),\ p_{r_i}(r_{k+1})] = [1,\ p_{r_i}(r_{k+1})] = 1 
\]
for $i = 1, \dots, k$,
and 
\begin{align*}
 p_{r_{k+1}}(h_{k+1,\, a}) &=  [p_{r_{k+1}}(a)^{-1}p_{r_{k+1}}(g_{k,\, m})p_{r_{k+1}}(a),\ p_{r_{k+1}}(r_{k+1})]\\
&= [p_{r_{k+1}}(a)^{-1} p_{r_{k+1}}(g_{k,\, m})p_{r_{k+1}}(a),\ 1] = 1.
\end{align*}
Thus $\{ r_1, \dots, r_k, r_{k+1} \} \subset \mathcal{S}_K(h_{k+1,\, a})$. 
\end{proof}

\medskip

Let us take a slope $s \in \mathcal{S}$. 
Then since $\mathcal{R} \cap \mathcal{S} = \emptyset$, 
for $r_{k+1} \in \mathcal{R}_{k+1} \subset \mathcal{R}$ we have $s \ne r_{k+1}$. 
Since $s$ is not a finite surgery slope, 
by Proposition~\ref{slope} $p_s(r_{k+1}) \ne 1$. 
Besides, 
since $\mathcal{S}_K(g_{k,\, m}) \subset \mathbb{Q} - \mathcal{S}$, 
we have the following 
\[
p_s(g_{k,\, m}) \ne 1\quad \textrm{for}\quad s \in \mathcal{S} = \{ s_1, \dots, s_m\}.
\]
We will use this property of $g_{k, m}$ several times in the following discussion.

Note that 
\[
p_s(h_{k+1,\, a}) 
=1\quad \textrm{if and only if}\quad [p_s(a^{-1} g_{k,\, m} a),\ p_s(r_{k+1})] = 1. 
\] 

Claim~\ref{alpha_non-commute} below is crucial in the following procedure, 
and we will apply this several times.  
The proof of this claim is a little bit long, so we defer its proof until we finish the proof of Lemma~\ref{separation}. 
In fact, we present Lemma~\ref{conjugate_non-commute} (in slightly generalized form) after completing the proof of Lemma~\ref{separation}, from which Claim~\ref{alpha_non-commute} immediately follows.  

\begin{claim}
\label{alpha_non-commute}
There exists $\alpha \in \pi_1(K(s))$ such that 
\[
[\alpha^{-1} p_s(g_{k,\, m})\alpha,\ p_s(r_{k+1})] \ne 1.
\] 
\end{claim}

\medskip

Let us return to a proof of the second step. 

\medskip

${\bf (1)}$\ Following Claim~\ref{alpha_non-commute} 
there exists $\alpha_1 \in \pi_1(K(s_1))$ such that 
\[
[\alpha_1^{-1} p_{s_1}(g_{k,\, m}) \alpha_1,\ p_{s_1}(r_{k+1})] \ne 1. 
\] 
Since $p_{s_1} \colon G(K) \to \pi_1(K(s_1))$ is surjective, 
we have $a_1 \in G(K)$ such that  $p_{s_1}(a_1) = \alpha_1$. 
Then we have 
\[
p_{s_1}( [a_1^{-1} g_{k,\, m} a_1,\ r_{k+1}] ) \ne 1. 
\]

Let us write 
\[
h_{k+1,\, a_1} = [a_1^{-1} g_{k,\, m} a_1,\  r_{k+1}] \in G(K), 
\]
which satisfies 
$p_{s_1}(h_{k+1,\, a_1}) \ne 1$, i.e. 
$s_1 \not \in \mathcal{S}_K(h_{k+1,\, a_1})$, 
in particular, $h_{k+1,\, a_1}$ is non-trivial. 
By Claim~\ref{r_k} $p_{r}(h_{k+1,\, a_1}) = 1$ for $r \in \{ r_1, \dots, r_k, r_{k+1}\}$. 
Hence, 
we put 
\[
g_{k+1,\, 1} = h_{k+1,\, a_1} \in [G(K), G(K)] 
\]
so that
\[
\mathcal{R}_{k+1} = \{r_1,\ldots,r_{k}, r_{k+1} \} \subset \mathcal{S}_K(g_{k+1,\, 1}) \subset \mathbb{Q} - \{ s_1 \}.
\]

\medskip

In the following we will apply step-by-step modification of $g_{k+1,\, 1}$ to obtain $g_{k+1,\, m} \in [G(K), G(K)]$ which satisfies 
\[
\mathcal{R}_{k+1}  = \{r_1,\ldots,r_{k}, r_{k+1} \} \subset \mathcal{S}_K(g_{k+1,\, m}) \subset \mathbb{Q} - \{ s_1, \dots, s_m\}.
\]

\medskip

${\bf (2)}$\  We will construct $g_{k+1,\, 2} \in [G(K), G(K)]$ satisfying 
\[
\mathcal{R}_{k+1} = \{r_1,\ldots,r_{k}, r_{k+1} \} \subset \mathcal{S}_K(g_{k+1,\, 2}) \subset \mathbb{Q} - \{ s_1, s_2 \}.
\]

\medskip

${\bf (2.1)}$\   If $s_2 \not\in \mathcal{S}_K(g_{k+1,\, 1})$, i.e. $p_{s_2}(g_{k+1,\, 1}) \ne 1$ neither, 
then put simply
$g_{k+1,\, 2} = g_{k+1,\, 1} \in [G(K), G(K)]$. 
Then $g_{k+1,\, 2}$ satisfies the property in ${\bf(2)}$.  

\medskip

${\bf (2.2)}$\  If $s_2 \in \mathcal{S}_K(g_{k+1,\, 1})$, i.e. $p_{s_2}(g_{k+1,\, 1}) = 1$, 
then we need to modify $g_{k+1,\, 1}$ further to obtain $g_{k+1,\, 2}$ with the above property in ${\bf (2)}$. 

\medskip

Claim~\ref{alpha_non-commute},  
together with the surjectivity of $p_{s_2} \colon G(K) \to \pi_1(K(s_2))$,  
enables us to choose $a_2 \in G(K)$ so that $p_{s_2}(a_2) = \alpha_2 \in \pi_1(K(s_2))$ satisfies 
\[
[\alpha_2^{-1} p_{s_2}(g_{k,\, m}) \alpha_2,\  p_{s_2}(r_{k+1})] \ne 1,\ \textrm{equivalently}\ p_{s_2}([a_2^{-1} g_{k,\, m} a_2,\ r_{k+1}]) \ne 1. 
\]

Then put 
\[
h_{k+1,\, a_2} = [a_2^{-1} g_{k,\, m} a_2,\ r_{k+1}] \in G(K), 
\] 
which satisfies $p_{s_2}(h_{k+1,\, a_2}) \ne 1$, in particular, $h_{k+1,\, a_2}$ is non-trivial.  
Since $p_r(g_{k,\, m}) = 1$ for $r \in \mathcal{R}_k$ and obviously $p_{r_{k+1}}(r_{k+1}) = 1$, 
\[
p_r(h_{k+1,\, a_2}) 
= [p_r(a_2)^{-1}p_r(g_{k,\, m})p_r(a_2),\ p_r(r_{k+1})] 
= 1\  \textrm{for}\  r \in \mathcal{R}_{k+1}. 
\] 

Recall also that $p_{s_1}(g_{k+1,\, 1}) \ne 1$ and  $p_{s_2}(g_{k+1,\, 1}) = 1$. 

\begin{claim}
\label{q_1}
We may choose an integer $q_1$ so that 
\[
p_{s_1}\left((g_{k+1,\, 1})^{q_1} h_{k+1,\, a_2}\right) \ne 1 \in \pi_1(K(s_1)).
\] 
\end{claim}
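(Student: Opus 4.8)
The plan is a short case analysis that uses only the fact, already secured in part (1), that $p_{s_1}(h_1) \neq 1$ in $\pi_1(K(s_1))$ — recall $h_1 = g_{k, a_1}$ was chosen precisely so that $p_{s_1}(g_{k,a_1}) \neq 1$ — together with the homomorphism property of $p_{s_1}$, which gives $p_{s_1}(h_1^{q} g_{k,a_2}) = p_{s_1}(h_1)^{q}\, p_{s_1}(g_{k,a_2})$ for every integer $q$.

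First I would inspect the element $p_{s_1}(g_{k,a_2}) \in \pi_1(K(s_1))$. If it is nontrivial, then $q_1 = 0$ already works, since $p_{s_1}(h_1^{0} g_{k,a_2}) = p_{s_1}(g_{k,a_2}) \neq 1$. If instead $p_{s_1}(g_{k,a_2}) = 1$, then $p_{s_1}(h_1^{1} g_{k,a_2}) = p_{s_1}(h_1) \neq 1$, so $q_1 = 1$ works. In either case we obtain the desired integer $q_1 \in \{0,1\}$, which is exactly the claim.

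If a more quantitative version is wanted for the subsequent inductive steps — for instance that all but finitely many exponents are admissible, so that one can simultaneously avoid a prescribed finite set of bad exponents — the same idea upgrades at once: the set $A = \{ q \in \mathbb{Z} : p_{s_1}(h_1)^{q}\, p_{s_1}(g_{k,a_2}) = 1 \}$ is either empty or a coset $q_0 + d\mathbb{Z}$, where $d$ is the order of $p_{s_1}(h_1)$ in $\pi_1(K(s_1))$; since $p_{s_1}(h_1) \neq 1$ forces $d \geq 2$ or $d = \infty$, this set $A$ is always a proper, and indeed very thin, subset of $\mathbb{Z}$, so a suitable $q_1$ can be found outside any prescribed finite list.

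There is no genuine obstacle here. The only point worth noting is that $\pi_1(K(s_1))$ may contain torsion, so from two solutions $q, q' \in A$ one may only deduce $p_{s_1}(h_1)^{q - q'} = 1$, not $p_{s_1}(h_1) = 1$. This is precisely why the clean argument is organized around the two specific exponents $0$ and $1$ rather than around an arbitrary pair of distinct values, and why in the quantitative remark $A$ is described as a proper coset rather than as empty or a singleton; no input beyond $p_{s_1}(h_1) \neq 1$ and the homomorphism property of $p_{s_1}$ is needed.
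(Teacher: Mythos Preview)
Your proof is correct and in fact cleaner than the paper's. Both arguments rest on the same two facts --- $p_{s_1}(h_1)\neq 1$ and the homomorphism identity $p_{s_1}(h_1^{q}g_{k,a_2})=p_{s_1}(h_1)^{q}p_{s_1}(g_{k,a_2})$ --- but the case splits are organized differently. You split on whether $p_{s_1}(g_{k,a_2})$ is trivial and read off $q_1\in\{0,1\}$ immediately. The paper instead tests $q_1=1$ first; if that fails it deduces $p_{s_1}(g_{k,a_2})=p_{s_1}(h_1)^{-1}$, rewrites the vanishing condition as $p_{s_1}(h_1)^{q_1-1}=1$, and then invokes the torsion order $t_1\geq 2$ of $p_{s_1}(h_1)$ to pick $q_1$ with $q_1-1$ not a multiple of $t_1$. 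Your route is shorter for the bare claim; the paper's detour through the torsion order is really a warm-up for the later Claim~\ref{q_2}, where one must simultaneously avoid several bad exponents and the coset description you give in your quantitative remark becomes essential.
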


\begin{proof}
If $p_{s_1}\left((g_{k+1,\, 1}) h_{k+1,\, a_2}\right) \ne 1$, then we choose $q_1 = 1$. 
Assume that 
\[
p_{s_1}((g_{k+1,\, 1}) h_{k+1,\, a_2}) 
= p_{s_1}(g_{k+1,\, 1}) p_{s_1}(h_{k+1,\, a_2}) = 1,\  
\textrm{i.e.}\  p_{s_1}(h_{k+1,\, a_2}) = p_{s_1}(g_{k+1,\, 1})^{-1}.
\] 

The argument is divided into two cases.
If $p_{s_1}(g_{k+1,\, 1})$ is not a torsion, then set $q_1$ to be any integer at least $3$.
Then 
$p_{s_1}((g_{k+1,\, 1})^{q_1} h_{k+1,\, a_2}) = p_{s_1}(g_{k+1,\, 1})^{q_1} p_{s_1}( h_{k+1,\, a_2}) = p_{s_1}(g_{k+1,\, 1})^{q_1-1} \ne 1$.

Otherwise, $p_{s_1}(g_{k+1,\, 1})$ is a torsion element of order $t_1\ge 2$.
Then 
 take $q_1$ so that $q_1-1$ is not a multiple of $t_1$. 
Then $p_{s_1}((g_{k+1,\, 1})^{q_1} h_{k+1,\, a_2}) \ne 1$.  
\end{proof}

\medskip

Furthermore, 
\[
p_{s_2}\left((g_{k+1,\, 1})^{q_1} h_{k+1,\, a_2}\right) = p_{s_2}(g_{k+1,\, 1})^{q_1} p_{s_2}(h_{k+1,\, a_2}) =  p_{s_2}(h_{k+1,\, a_2}) \ne 1,
\]
because $p_{s_2}(g_{k+1,\, 1}) = 1$. 
Hence, 
\[
\mathcal{S}_K((g_{k+1,\, 1})^{q_1} h_{k+1,\, a_2}) \subset \mathbb{Q} - \{ s_1, s_2 \}.
\]

By construction $p_r(g_{k+1,\, 1}) = p_r(h_{k+1,\, a_2}) = 1$, 
and hence,  
\[
p_r\left((g_{k+1,\, 1})^{q_1} h_{k+1,\, a_2}\right) 
= p_r(g_{k+1,\, 1})^{q_1} p_r(h_{k+1,\, a_2}) = 1.
\] 
for any slope $r \in \{ r_1, \dots, r_k, r_{k+1} \}$. 
This shows $\mathcal{R}_{k+1} \subset \mathcal{S}_K\left((g_{k+1,\, 1})^{q_1} h_{k+1,\, a_2}\right)$. 

Now put 
\[
g_{k+1,\, 2} = (g_{k+1,\, 1})^{q_1} h_{k+1,\, a_2} \in G(K)
\]
so that 
\[
\mathcal{R}_{k+1} = \{r_1,\ldots,r_{k}, r_{k+1} \} \subset \mathcal{S}_K(g_{k+1,\, 2}) \subset \mathbb{Q} - \{ s_1, s_2 \}.
\]

Note that since both $g_{k+1,\, 1}$ and $h_{k+1,\, a_2}$ belong to $[G(K), G(K)]$, 
so does $g_{k+1,\, 2}$. 

\medskip

Assume that we have an element $g_{k+1, i}$ satisfying 

\medskip

${\bf (i)}\ \mathcal{R}_{k+1} = \{r_1,\ldots,r_{k}, r_{k+1} \} \subset \mathcal{S}_K(g_{k+1,\, i}) \subset \mathbb{Q} - \{ s_1, \dots, s_i \}$\ 
$(2 \le i \le m-1)$. 

Now we will construct element $g_{k+1, i+1} \in G(K)$ satisfying 

\medskip

$\mathcal{R}_{k+1} = \{r_1,\ldots,r_{k}, r_{k+1} \} \subset \mathcal{S}_K(g_{k+1,\, i+1}) \subset \mathbb{Q} - \{ s_1, \dots, s_i, s_{i+1} \}$. 

\medskip

${\bf (i+1, 1)}$\   
If $s_{i+1} \not\in \mathcal{S}_K(g_{k+1, i})$, i.e. $p_{s_{i+1}}(g_{k+1, i}) \ne 1$ neither, 
then put simply 
$g_{k+1, i+1} = g_{k+1, i}$ so that 
$\mathcal{R}_{k+1} = \{r_1,\ldots,r_{k}, r_{k+1} \} \subset \mathcal{S}_K(g_{k+1,\, i+1}) \subset \mathbb{Q} - \{ s_1, \dots, s_i, s_{i+1} \}$. 

\medskip

${\bf (i+1, 2)}$\ 
If $s_{i+1} \in \mathcal{S}_K(g_{k+1, i})$, i.e. $p_{s_{i+1}}(g_{k+1, i}) = 1$, 
then we will modify $g_{k+1, i}$ further to obtain $g_{k+1, i+1}$ with the property in ${\bf (i)}$. 
By the assumption $s_j$ is not a torsion surgery slope for $j \ge 3$. 

\medskip

Again, following Claim~\ref{alpha_non-commute},  
together with the surjectivity of $p_{s_{i+1}} \colon G(K) \to \pi_1(K(s_{i+1}))$, 
we find $a_{i+1} \in G(K)$ so that $p_{s_{i+1}}(a_{i+1}) = \alpha_{i+1} \in \pi_1(K(s_{i+1}))$ satisfies 
\[
[\alpha_{i+1}^{-1} p_{s_{i+1}}(g_{k,\, m}) \alpha_{i+1},\ p_{s_{i+1}}(r_{k+1})] \ne 1,\ \textrm{i.e.}\ p_{s_{i+1}}( [a_{i+1}^{-1} g_{k,\, m} a_{i+1},\ r_{k+1}] ) \ne 1. 
\]
Then put 
\[
h_{k+1,\, a_{i+1}} = [a_{i+1}^{-1}g_{k,\, m} a_{i+1},\ r_{k+1}] \in G(K), 
\]
which is non-trivial and $p_{s_{i+1}}(h_{k+1,\, a_{i+1}}) \ne 1$. 

For convenience, we collect some properties of $g_{k+1,\, i}$ and $h_{k+1,\, a_{i+1}}$. 

\begin{itemize}
\item $p_r(g_{k+1,\, i}) = p_r(h_{k+1,\, a_{i+1}}) = 1$ for every slope $r \in \mathcal{R}_{k+1} = \{ r_1, \dots, r_k, r_{k+1} \}$. 
\item $p_{s}(g_{k+1,\, i}) \ne 1$ for $s \in \{ s_1, \dots, s_i \}$. 
\item $p_{s_{i+1}}(g_{k+1,\, i}) = 1$ and $p_{s_{i+1}}(h_{k+1,\, a_{i+1}}) \ne 1$. 
\end{itemize}

\medskip

For slopes $s_j \in \mathcal{S}$, 
we set an integer $n_j$ as follows. 

If $\pi_1(K(s_j))$ is torsion free, then put $n_j = 1$. 
Suppose that $s_j$ is a torsion surgery slope. 
Then $s_j$ is either a finite surgery slope or a reducing surgery slope. 
For such a surgery slope $s_j$, we set $n_j$ as follows. 
If $s_j$ is a finite surgery slope, then let $n_j = |\pi_1(K(s_j))|$.   
If $s_j$ is a reducing surgery slope, 
then $K(s_j)$ has at most three prime factors \cite{How}  $M_{j, 1},\  M_{j, 2},\ M_{j, 3}$  
($M_{j, 3}$ may be $S^3$).  
Let $m_{j, 1} = |\pi_1(M_{j, 1})|$ if $|\pi_1(M_{j, 1})| < \infty$, otherwise $m_{j, 1} = 1$. 
Set $m_{j, 2}$ and $m_{j, 3}$ in a similar fashion. 
Then put $n_j = m_{j, 1}m_{j, 2}m_{j, 3}$. 

Recall that, by the assumption, 
at most two slopes $s_1$ and $s_2$ can be torsion surgery slopes, 
hence $n_j = 1$ for $j \ge 3$.

\begin{claim}
\label{q_2}
There exists an integer $q_i$ such that 
\[
p_{s_1}\left((g_{k+1,\, i})^{q_i n_1n_2 + 1} (h_{k+1,\, a_{i+1}})^{n_1 n_2}\right) \ne 1 \in \pi_1(K(s_1)),\  \textrm{and}
\]
\[
p_{s_2}\left((g_{k+1,\, i})^{q_i n_1n_2+ 1} (h_{k+1,\, a_{i+1}})^{n_1n_2}\right) \ne 1 \in \pi_1(K(s_2)).
\]
\end{claim}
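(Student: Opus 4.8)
The plan is to expand, for $i=1,2$, via the homomorphism property of $p_{s_i}$,
\[
p_{s_i}\!\left(h_2^{\,q_2 n_1 n_2 + 1}\, g_{k, a_3}^{\,n_1 n_2}\right) = p_{s_i}(h_2)^{\,q_2 n_1 n_2 + 1}\; p_{s_i}(g_{k, a_3})^{\,n_1 n_2},
\]
and to show that for each fixed $i\in\{1,2\}$ this element equals $1$ for \emph{at most one} value of $q_2$. Granting this, the condition ``$p_{s_1}(\cdots)\ne 1$ and $p_{s_2}(\cdots)\ne 1$'' excludes at most two integers $q_2$, so any remaining integer $q_2$ is as required. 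Throughout we use the recorded facts $p_{s_1}(h_2)\ne 1$, $p_{s_2}(h_2)\ne 1$, and $g_{k,a_3}\ne 1$.

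For the ``at most one bad $q_2$'' statement, abbreviate $a=p_{s_i}(h_2)$ (so $a\ne 1$) and $b=p_{s_i}(g_{k,a_3})$, and split on the order of $a$. If $a$ has infinite order, then the map $q\mapsto a^{\,q n_1 n_2 + 1}$ is injective (equality for $q,q'$ forces $a^{(q-q')n_1 n_2}=1$, and $n_1 n_2\ne 0$), hence $q\mapsto a^{\,q n_1 n_2 + 1} b^{\,n_1 n_2}$ is injective and assumes the value $1$ at most once. If instead $a$ has finite order, then since $a\ne 1$ the group $\pi_1(K(s_i))$ has nontrivial torsion, so by Lemma~\ref{torsion_slope} $s_i$ is a torsion surgery slope; by the very choice of $n_i$, every torsion element of $\pi_1(K(s_i))$ has order dividing $n_i$, which divides $n_1 n_2$. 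Thus $a^{\,n_1 n_2}=1$, so $a^{\,q n_1 n_2 + 1}=a$ and the whole expression is the constant $a\,b^{\,n_1 n_2}$; it remains to see $a\,b^{\,n_1 n_2}\ne 1$. If $b$ is torsion, then likewise $b^{\,n_1 n_2}=1$ and the expression is $a\ne 1$; if $b$ has infinite order, then $b^{\,n_1 n_2}$ has infinite order while $a$ has finite order, so $a\,b^{\,n_1 n_2}\ne 1$ (a finite-order element cannot be the inverse of an infinite-order one). In either case the $s_i$-condition is violated by \emph{no} value of $q_2$, which is even stronger than needed.

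The factor $n_1 n_2$ is used only as a common multiple of $n_1$ and $n_2$, so that the single element $h_2^{\,q_2 n_1 n_2+1}g_{k,a_3}^{\,n_1 n_2}$ can be tested simultaneously against $s_1$ and $s_2$ (in the later steps the analogous accumulated products $n_1 n_2\cdots$ will play the same role, and the fixed power $n_1n_2$ on $g_{k,a_3}$ also guarantees $p_{s_3}(g_{k,a_3})^{n_1n_2}\neq 1$ since $\pi_1(K(s_3))$ is torsion-free). The only mildly delicate input is the assertion used above that \emph{every torsion element of $\pi_1(K(s_i))$ has order dividing $n_i$}: for a finite surgery slope this is Lagrange's theorem, since $n_i=|\pi_1(K(s_i))|$; for a reducing surgery slope one writes $\pi_1(K(s_i))$ as the free product of the fundamental groups of the (at most three) prime summands $M_{i,1},M_{i,2},M_{i,3}$ \cite{How}, notes that a torsion element of a free product is conjugate into one of the factors \cite[Corollary~4.1.4]{MKS}, and uses that each $M_{i,j}$ either is spherical with $|\pi_1(M_{i,j})|=m_{i,j}$ or has torsion-free fundamental group (being $S^1\times S^2$ or aspherical, by \cite[p.48 (C.1)]{AFW} together with \cite{Hem}), so that the order of the torsion divides some $m_{i,j}$ and hence divides $n_i=m_{i,1}m_{i,2}m_{i,3}$. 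This verification, rather than the counting argument, is where essentially all the content lies.
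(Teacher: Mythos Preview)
Your proposal is correct and follows essentially the same route as the paper: for each $i$ you reduce to showing that $q\mapsto p_{s_i}(h_2)^{\,qn_1n_2+1}\,p_{s_i}(g_{k,a_3})^{\,n_1n_2}$ hits $1$ for at most one $q$, using injectivity of the power map when there is no torsion and the definition of $n_i$ to annihilate torsion otherwise. Your case split on the order of $a=p_{s_i}(h_2)$ (rather than on whether $s_i$ is a torsion surgery slope) is in fact slightly more careful than the paper's, since when $s_i$ is a reducing slope the paper asserts $p_{s_i}(h_2)^{n_i}=1$ and $p_{s_i}(g_{k,a_3})^{n_i}=1$ outright, which as you observe is only guaranteed for torsion elements; your argument cleanly absorbs the remaining infinite-order case into the ``at most one bad $q$'' branch.
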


\begin{proof}
We need to consider three cases: 
\begin{enumerate}
\renewcommand{\labelenumi}{(\roman{enumi})}
\item 
$s_1$ and $s_2$ are not torsion surgery slopes, 
\item
$s_1$ is a torsion surgery slope, but $s_2$ is not a torsion surgery slope, or 
\item 
$s_1$ and $s_2$ are torsion surgery slopes. 
\end{enumerate}

\medskip

(i) 
Assume that  
\[
p_{s_1}\left((g_{k+1,\, i})^{\ell n_1n_2 + 1} (h_{k+1,\, a_{i+1}})^{n_1n_2}\right) 
= p_{s_1}(g_{k+1,\, i})^{\ell n_1n_2 + 1} p_{s_1}(h_{k+1,\, a_{i+1}})^{n_1n_2} = 1,\  \textrm{and}
\]
\[
p_{s_1}\left((g_{k+1,\, i})^{\ell' n_1n_2 + 1} (h_{k+1,\, a_{i+1}})^{n_1n_2}\right) 
= p_{s_1}(g_{k+1,\, i})^{\ell' n_1n_2 + 1} p_{s_1}(h_{k+1, a_{i+1}})^{n_1n_2} = 1
\]
for some integers $\ell, \ell'$. 
Then we have 
$p_{s_1}(g_{k+1,\, i})^{(\ell - \ell') n_1n_2} = 1$ independently of whether $p_{s_1}(h_{k+1, a_{i+1}}) = 1$ or not. 
Since $p_{s_1}(g_{k+1,\, i}) \ne 1$ and $\pi_1(K(s_1))$ is torsion free, 
$\ell = \ell'$. 
This means that $p_{s_1}\left((g_{k+1,\, i})^{x n_1n_2 + 1} (h_{k+1,\, a_{i+1}})^{n_1n_2}\right) = 1$ for at most one integer $x = \ell_1 \ge 1$. 
Similarly $p_{s_2}\left((g_{k+1,\, i})^{y n_1n_2 + 1} (h_{k+1,\, a_{i+1}})^{n_1n_2}\right) = 1$ for at most one integer $y = \ell_2 \ge 1$. 

We choose $q_i \ne \ell_1, \ell_2$ so that $p_{s_j}\left((g_{k+1,\, i})^{q_i n_1n_2 + 1} (h_{k+1,\, a_{i+1}})^{n_1n_2}\right) \ne 1 \in \pi_1(K(s_j))$ for $j = 1, 2$. 

\medskip

(ii) By the definition of $n_1$, 
\[
p_{s_1}(g_{k+1,\, i})^{n_1} = 1\quad \textrm{and}\quad p_{s_1}( h_{k+1,\, a_{i+1}})^{n_1} = 1.
\] 
Thus we have 
\begin{align*}
p_{s_1}\left((g_{k+1,\, i})^{x n_1n_2 + 1} (h_{k+1,\, a_{i+1}})^{n_1n_2}\right)
& = p_{s_1}(g_{k+1,\, i})^{x n_1n_2 + 1} p_{s_1}(h_{k+1,\, a_{i+1}})^{n_1n_2} \\
& = p_{s_1}(g_{k+1,\, i})\ne 1
\end{align*}
for any integer $x \ge 1$. 

For non-torsion surgery slope $s_2$, as observed in (i), 
\[
p_{s_2}\left((g_{k+1,\, i})^{y n_1n_2+ 1} (h_{k+1,\, a_{i+1}})^{n_1n_2}\right) = 1
\] for at most one integer $y = \ell_2$. 

So we may choose $x = q_i \ne \ell_2$ so that 
\[
p_{s_1}\left((g_{k+1,\, i})^{q_i n_1n_2 + 1} (h_{k+1,\, a_{i+1}})^{n_1n_2}\right) \ne 1, \quad \textrm{and} 
\]
\[
p_{s_2}\left((g_{k+1,\, i})^{q_i n_1n_2+ 1} (h_{k+1,\, a_{i+1}})^{n_1n_2}\right) \ne 1.
\] 

\medskip

(iii) By the definition of $n_1$ and $n_2$,  
\[
p_{s_1}(g_{k+1,\, i})^{n_1} = 1,\  p_{s_1}( h_{k+1,\, a_{i+1}})^{n_1} = 1\ \textrm{and}
\]
\[
p_{s_2}(g_{k+1,\, i})^{n_2} = 1,\  p_{s_2}( h_{k+1,\, a_{i+1}})^{n_2} = 1. 
\]
Hence, we have
\begin{align*}
p_{s_1}\left((g_{k+1,\, i})^{x n_1n_2 + 1} (h_{k+1,\, a_{i+1}})^{n_1n_2}\right) 
& = p_{s_1}(g_{k+1,\, i})^{x n_1n_2 + 1} p_{s_1}(h_{k+1,\, a_{i+1}})^{n_1n_2} \\
& = p_{s_1}(g_{k+1,\, i})
\ne 1 \in \pi_1(K(s_1)),
\end{align*}
and 
\begin{align*}
p_{s_2}\left((g_{k+1,\, i})^{x n_1n_2+ 1} (h_{k+1, a_{i+1}})^{n_1n_2}\right) 
& = p_{s_2}(g_{k+1,\, i})^{x n_1n_2+ 1} p_{s_2}(h_{k+1, a_{i+1}})^{n_1n_2} \\
& = p_{s_2}(g_{k+1,\, i})
\ne 1 \in \pi_1(K(s_2))
\end{align*}
for any integer $x \ge 1$.  
So we may choose a desired integer $q_i \ge 1$. 
\end{proof}

\medskip

Furthermore, since $p_{s_{i+1}}(g_{k+1,\, i}) = 1$,\ $p_{s_{i+1}}(h_{k+1,\, a_{i+1}}) \ne 1$ and $\pi_1(K(s_{i+1}))$ is torsion free, 
\begin{align*}
p_{s_{i+1}}\left((g_{k+1,\, i})^{q_i n_1n_2 + 1} (h_{k+1,\, a_{i+1}})^{n_1n_2}\right) 
& = p_{s_{i+1}}(g_{k+1,\, i})^{q_i n_1n_2 + 1} p_{s_{i+1}}(h_{k+1,\, a_{i+1}})^{n_1n_2} \\
& = p_{s_{i+1}}(h_{k+1,\, a_{i+1}})^{n_1n_2}
\ne 1.
\end{align*}

On the other hand, 
since $p_r(g_{k+1,\, i}) = p_r(h_{k+1,\, a_{i+1}}) = 1$ for $r \in \{ r_1, \dots, r_k, r_{k+1} \} = \mathcal{R}_{k+1}$, 
\begin{align*}
p_r\left((g_{k+1,\, i})^{q_{i} n_1n_2+1} (h_{k+1,\, a_{i+1}})^{n_1n_2}\right) 
& = p_r(g_{k+1,\, i})^{q_{i} n_1n_2+1} p_r(h_{k+1,\, a_{i+1}})^{n_1n_2} \\
& = 1 
\end{align*}
for $r \in \mathcal{R}_{k+1}$. 
This shows $\mathcal{R}_{k+1} \subset \mathcal{S}_K\left((g_{k+1,\, i})^{q_{i} n_1n_2+1} (h_{k+1,\, a_{i+1}})^{n_1n_2}\right)$. 

Hence, we obtain a non-trivial element 
\[
g_{k+1,\, {i+1}} = (g_{k+1,\, i})^{q_{i}n_1n_2+1} (h_{k+1,\, a_{i+1}})^{n_1n_2} \in [G(K), G(K)] 
\]
which satisfies 
\[
\mathcal{R}_{k+1} = \{r_1,\ldots,r_{k}, r_{k+1} \} \subset \mathcal{S}_K(g_{k+1,\, {i+1}}) \subset \mathbb{Q} - \{ s_1, \dots s_{i+1} \}.
\]
Therefore we have an element $g_{k+1, m} \in G(K)$ which satisfies
\[
\mathcal{R}_{k+1} = \{r_1,\ldots,r_{k}, r_{k+1} \} \subset \mathcal{S}_K(g_{k+1,\, m}) \subset \mathbb{Q} - \{ s_1, \dots s_{m} \} = \mathcal{S}.
\]

This completes \textbf{Step 2}. 

\medskip

Continue the above procedure to obtain a non-trivial element $g_{n,\, m} \in [G(K), G(K)]$ with 
\[
\mathcal{S}_n = \{r_1,\ldots,r_{n-1}, r_n \} \subset \mathcal{S}_K(g_{n,\, m}) \subset \mathbb{Q} - \{ s_1, \dots, s_{m} \}.
\]
Then $g = g_{n,\, m} \in [G(K), G(K)]$ is a desired element, 
and this completes a proof of Lemma~\ref{separation}.
\end{proof}

Now let us turn to prove Lemma~\ref{conjugate_non-commute} below, 
which immediately implies Claim~\ref{alpha_non-commute} used in the proof of Lemma~\ref{separation}. 

\begin{lemma}
\label{conjugate_non-commute}
Assume that $K(s)$ is not a Seifert fiber space. 
Then for any element $g \in G(K)$ and any slope element $r \in G(K)$ such that 
$p_s(g)\ne 1$ and $p_s(r) \ne 1$, 
there exists $\alpha \in \pi_1(K(s))$ such that 
\[
[\alpha^{-1} p_s(g) \alpha,\ p_s(r)] \ne 1. 
\] 
\end{lemma}

\begin{proof}
Suppose for a contradiction that 
$[\alpha^{-1} p_s(g) \alpha,\ p_s(r)] = 1$ for all $\alpha \in \pi_1(K(s))$. 
This then implies 
\[
[\alpha^{-1} p_s(g)^{-1} \alpha,\  p_s(r)] = [(\alpha^{-1} p_s(g) \alpha)^{-1},\ p_s(r)] =1 
\]
for all $\alpha \in \pi_1(K(s))$ as well. 
Hence, the normal closure $\langle \! \langle p_s(g) \rangle\!\rangle$ is contained in $Z(p_s(r))$;
$\langle \! \langle p_s(g) \rangle\!\rangle \subset Z(p_s(r))$.

First we consider the case where $K(s)$ is reducible. 
Then $K(s)$ is ether $S^2 \times S^1$ or non-prime. 
Since we assume that $s$ is not a Seifert surgery slope, the former does not happen.
Thus $K(s)$ is non-prime, hence its fundamental group $\pi_1(K(s))$ is decomposed as a free product $A \ast B$. (Note that $A$ or $B$ may be decomposable.)
By the structure of the commuting elements of the free product (\cite[Corollary~4.1.6 and p.196, 28]{MKS}), 
we have an element $x \in A*B$ such that $x^{-1} Z(p_s(r)) x\subset A$ or $x^{-1} Z(p_s(r)) x\subset B$, 
or $Z(p_s(r))$ is cyclic. 
 
In the first case, 
$\langle \! \langle p_s(g) \rangle\!\rangle = x^{-1} \langle \! \langle p_s(g) \rangle\!\rangle x \subset x^{-1} Z(p_s(r)) x \subset A$. 
Take $a \ (\ne 1)  \in \langle \! \langle p_s(g) \rangle\!\rangle \subset A$. 
Then $b^{-1} ab \not\in A$ for $b\ (\ne 1)\in B$ \cite[Theorem~4.2]{MKS}, 
in particular not in $\langle \! \langle p_s(g) \rangle\!\rangle$, a contradiction. 
Similarly the second case does not occur. 

So $Z(p_s(r))$ is cyclic, and thus $\langle \! \langle p_s(g) \rangle\!\rangle \subset Z(p_s(r))$ is also cyclic. 
If $A \cong B \cong \mathbb{Z}_2$, 
then $\pi_1(K(s)) \cong \mathbb{Z}_2 * \mathbb{Z}_2$. 
Abelianizing this we have $H_1(K(s)) \cong \mathbb{Z}_2 \oplus \mathbb{Z}_2$, contradicting $H_1(K(s))$ being cyclic. 
Hence we may assume $A \not\cong Z_2$.  
Let us write, if necessary taking a conjugation of $A*B$, 
$p_s(g) = a_1b_1\cdots a_kb_k$ in which $a_i \in A, \ b_i \in B$ are non-trivial and $k \ge 1$ \cite[Theorem~4.2]{MKS}. 
Take $a \ (\ne 1) \in A$ so that $a \ne  a_1$. 
(This is possible because $A \ne \mathbb{Z}_2$. 
In fact, for the Seifert fiber space $\mathbb{R}P^3 \# \mathbb{R}P^3$,  
$\pi_1(\mathbb{R}P^3 \# \mathbb{R}P^3) \cong \mathbb{Z}_2 * \mathbb{Z}_2$ has an infinite cyclic normal group generated by a regular fiber.)
Then 
\[
(a_1 a^{-1})^{-1} p_s(g)  (a_1 a^{-1})
= (a a_1^{-1}) (a_1 b_1 \cdots a_kb_k)  (a_1 a^{-1})
= ab_1a_2b_2\cdots a_kb_k  (a_1 a^{-1}),
\] 
and apply \cite[Theorem~4.2]{MKS} to see that any power of this element  
\[
(a b_1 a_2 \cdots b_k a_1)(b_1 \cdots a_1) \cdots (b_1\cdots a_1)(b_1\cdots a_1a^{-1})
\] 
cannot be $(a_1b_1\cdots b_k)^n$ for any integer $n$. 
This shows that $\langle \! \langle p_s(g) \rangle\!\rangle$ is not normal, a contradiction. 

Thus we may assume that $K(s)$ is irreducible.

\medskip

It follows from the structure theorem of centralizers \cite[Theorems~2.5.1 and 2.5.2]{AFW} that 
we have the following: 
\begin{enumerate}
\renewcommand{\labelenumi}{(\roman{enumi})}
\item 
$Z(p_s(r)) \cong \mathbb{Z}, \mathbb{Z} \oplus \mathbb{Z}$, or, \mbox{the Klein bottle group}. 
\item There is a Seifert piece $M$ of the torus decomposition \cite{JS,Jo,Hat} of $K(s)$ such that, 
up to conjugation of $\pi_1(K(s))$, 
$p_s(r)$ is a power of an element represented by a regular fiber in $M$ and  
$Z(p_s(r))$ is the canonical subgroup of $\pi_1(M)$, in particular $Z(p_s(r)) \subset \pi_1(M)$.  
\end{enumerate} 

\medskip 

\noindent
{\bf Case (i)}.\\
Since $\langle \! \langle p_s(g) \rangle\!\rangle$ is a subgroup of $Z(p_s(r))$, 
$\langle \! \langle p_s(g) \rangle\!\rangle $ is isomorphic to either $\mathbb{Z},\mathbb{Z} \oplus \mathbb{Z}$, or the Klein bottle group.
If $\langle \! \langle p_s(g) \rangle\!\rangle$ has finite index in $\pi_1(K(s))$, then $\pi_1(K(s))$ is virtually abelian. 
By the assumption that $K(s)$ is not Seifert fibered, it has infinite fundamental group and the abelian subgroup (of finite index) is also infinite. 
So the finite cover of $K(s)$ associated to the abelian subgroup is $S^1 \times S^1 \times S^1$ \cite[p.25, Table~2]{AFW}.  
Recall that $K(s)$ is irreducible and $\pi_1(K(s))$ is infinite.
It follows from the last paragraph of p.35 in \cite{Scott} that $K(s)$ is also Seifert fiber space, contradicting the assumption.

Therefore $\langle \! \langle p_s(g) \rangle\!\rangle \ (\subset Z(p_s(r)))$ is a finitely generated subgroup of infinite index, so we may apply \cite{HJ}, \cite[p.118 (L9)]{AFW} to see that 

\begin{enumerate}
\renewcommand{\labelenumi}{(\alph{enumi})}
\item
$K(s)$ is a Seifert fiber space and  $\langle \! \langle p_s(g) \rangle\!\rangle$ is a subgroup generated by a regular fiber in $K(s)$, 
\item
$K(s)$ fibers over $S^1$ with surface fiber $\Sigma$ and $\langle \! \langle p_s(g) \rangle\!\rangle$ is a finite index subgroup of $\pi_1(\Sigma)$, or 
\item
$K(s)$ is the union of two twisted $I$--bundles over a compact connected  (necessarily non-orientable) surface $\Sigma$, 
and $\langle \! \langle p_s(g) \rangle\!\rangle$ is a finite index subgroup of $\pi_1(\Sigma)$. 
\end{enumerate}

The assumption that $s$ is not a Seifert surgery slope excludes the first possibility (a).

In the second case (b), for homological reason, $s=0$. 
Let us write the slope element $r \in G(K)$ as $\mu^m \lambda^n$ for some relatively prime integers $m$ and $n$. 
Since $p_0(\lambda)  =1$ in $\pi_1(K(0))$,  
$p_0(r) = p_0(\mu)^m \in \pi_1(K(0))$.  
Moreover, since $\langle \! \langle p_0(g) \rangle\!\rangle$ is a finite index subgroup of $\pi_1(\Sigma)$ of a closed orientable surface $\Sigma$ 
and $\langle \! \langle p_0(g) \rangle\!\rangle \cong \mathbb{Z}$, $\mathbb{Z} \oplus \mathbb{Z}$, or the Klein bottle group, 
$\pi_1(\Sigma)\cong \mathbb{Z} \oplus \mathbb{Z}$ and $\langle \! \langle p_0(g) \rangle\!\rangle \cong \mathbb{Z} \oplus \mathbb{Z}$. 
Since $\langle \! \langle p_0(g) \rangle\!\rangle \subset Z(p_0(r))$ and $Z(p_0(r))\cong \mathbb{Z}$, $\mathbb{Z} \oplus \mathbb{Z}$, 
or the Klein bottle group, 
this shows that $Z(p_0(r)) \cong \mathbb{Z} \oplus \mathbb{Z}$ or the Klein bottle group and that $\langle \! \langle p_0(g) \rangle\!\rangle $ is a finite index subgroup of $Z(p_0(r))$. 
Since $p_0(r) \in Z(p_0(r))$, $p_0(r)^{a} \in \langle \! \langle p_0(g) \rangle\!\rangle $ for some $a > 0$. 
Then $p_0(r)^a = p_0(\mu)^{ma} \in \langle \! \langle p_0(g) \rangle\!\rangle \subset \pi_1(\Sigma) = [\pi_1(K(0)),\pi_1(K(0))]$. 
This then implies that 
$[p_0(r)^a] = [p_0(\mu)^{ma}] = ma [p_0(\mu)] = 0$ in $H_1(K(0)) =  \langle [p_0(\mu)] \rangle = \mathbb{Z}$. 
Hence $m = 0$ and the slope element $r$ is longitudinal. 
Then $p_s(r) = p_0(r) = 1$, contradicting the assumption. 

\medskip

In the third case (c), then $K(s)=X\cup Y$ is the union of two twisted $I$--bundles $N(\Sigma)$ over the non-orientable surface $\Sigma$.
Write $\widetilde{\Sigma} = \partial N_{\Sigma}$, 
which is the $\partial I$--subbundle of $N_{\Sigma}$. 
Then $\pi_1(K(s)) = \pi_1(N_{\Sigma}) \ast_{\pi_1(\widetilde{\Sigma})} \pi_1(N_{\Sigma})$. 
Since $\pi_1(N_{\Sigma})/\pi_1(\widetilde{\Sigma}) \cong \mathbb{Z}_2$, we have 
\[
1 \to \pi_1(\widetilde{\Sigma}) \to \pi_1(K(s)) \to \mathbb{Z}_2 \ast \mathbb{Z}_2 \to 1.
\]
This means that a cyclic group $H_1(K(s))$ has an epimorphism to $\mathbb{Z}_2 \oplus \mathbb{Z}_2$, a contradiction. 

\medskip

\noindent
{\bf Case (ii)}.\\
Take an element $h \in \pi_1(K(s))$ so that $h^{-1} p_s(r) h \in \pi_1(M)$ and 
$Z(h^{-1} p_s(r) h)$ is the canonical subgroup of  $\pi_1(M)$. 
Then we have 
\[
\langle \! \langle p_s(g) \rangle\!\rangle = h^{-1} \langle \! \langle p_s(g) \rangle\!\rangle h \subset 
h^{-1} Z(p_s(r)) h = Z(h^{-1} p_s(r) h) \subset \pi_1(M).
\] 

By the assumption $K(s)$ is not a Seifert fiber space, and hence the Seifert piece $M$ is not $K(s)$.  
Let $T$ be a component of $\partial M$ and $X$ a decomposing piece of $K(s)$ other than $M$ with $\partial X \supset T$. 

If $\langle \! \langle p_s(g) \rangle\!\rangle \subset \pi_1(T) \cong \mathbb{Z} \oplus \mathbb{Z}$, 
then $\langle \! \langle p_s(g) \rangle\!\rangle \cong \mathbb{Z}$ or $\mathbb{Z} \oplus \mathbb{Z}$ so we have done by Case (i).
So we may assume that $\langle \! \langle p_s(g) \rangle\!\rangle \not\subset \pi_1(T)$. 
Then we have an element $\gamma \in \langle \! \langle p_s(g) \rangle\!\rangle -\pi_1(T) \subset  \pi_1(M) - \pi_1(T)$.  

Now let us find $\alpha  \in \pi_1(X) - \pi_1(T)$. 
Assume for a contradiction that $\pi_1(X) = \pi_1(T)$. 
Then $\pi_1(X) \cong \mathbb{Z} \oplus \mathbb{Z}$ and  \cite[Theorem~10.6]{Hem} shows that $X$ is an $I$--bundle over a closed surface, 
precisely $X$ is either $S^1 \times S^1 \times I$ or the twisted $I$--bundle over the Klein bottle. 
The first case does not happen, 
and in the second case $\pi_1(T)$ is an index two subgroup of $\pi_1(X)$, and hence $\pi_1(X) \ne \pi_1(T)$, a contradiction. 
So there exists $\alpha  \in \pi_1(X) - \pi_1(T)$.  

Consider the element 
$\alpha^{-1} \gamma \alpha$. 
Since $\alpha \in \pi_1(X) - \pi_1(T)$ and $\gamma \in \pi_1(M) - \pi_1(T)$, 
we may apply \cite[Corollary~4.4.1]{MKS} to see that $\alpha^{-1} \gamma \alpha \not\in \pi_1(M)$ in 
$\pi_1(M \cup_T X) = \pi_1(M) \ast_{\pi_1(T)} \pi_1(X)$. 
Since $\pi_1(M \cup_T X)$ injects into $\pi_1(K(s))$, 
$\alpha^{-1} \gamma \alpha \not\in \pi_1(M)$ in $\pi_1(K(s))$ neither. 
This implies $\alpha^{-1} \gamma \alpha \not\in \langle \! \langle p_s(g) \rangle\!\rangle \subset \pi_1(M) \subset \pi_1(K(s))$ 
for $\gamma \in \langle \! \langle p_s(g) \rangle\!\rangle$. 
This contradicts $\langle \! \langle p_s(g) \rangle\!\rangle$ being normal in $\pi_1(K(s))$. 
\end{proof}

\bigskip

\section{Shrinking Lemma}
\label{shrink}

Recall that 
in general we have the following inequality (Propositions~\ref{cup_cap} and \ref{S_K(g)_S_K(g^n)}).
\[
\mathcal{S}_K(g) \cap \mathcal{S}_K(h) \subset \mathcal{S}_K(g^m) \cap \mathcal{S}_K(h^n)
\subset \mathcal{S}_K(g^m h^n)
\]
for any $m, n \ne 0$. 

The purpose in this Section is prove that the inequality may be replaced by the equality under some conditions. 

\begin{lemma_S_K_intersection}[Shrinking Lemma]
Let $K$ be a hyperbolic knot. 
Let $g$ be a non-peripheral element and $h$ a non-trivial element in $[G(K),G(K)]$. 
Assume that $\mathbb{Q} - \mathcal{S}_K(g)$ contains neither a finite surgery slope nor two reducing surgery slopes. 
Then there are infinitely many integers $n$ and a constant $N_n > 0$ such that the equality 
\[
\mathcal{S}_K(g) \cap \mathcal{S}_K(h) = \mathcal{S}_K(g^mh^n) 
\] 
holds for infinitely many integers $m \ge N_n$ for each $n$.
\end{lemma_S_K_intersection}

\begin{proof}
As we mentioned above, 
we have 
\[
\mathcal{S}_K(g) \cap \mathcal{S}_K(h) \subset \mathcal{S}_K(g^mh^n)
\]
for all integers $m$ and $n$ without any condition. 

Recall that if $\pi_1(K(s))$ has a torsion element, 
then $K(s)$ is either a spherical Seifert fiber space or a non-prime $3$--manifold. 
Furthermore, there are at most only finitely many such slopes.  
Let $s_i$ ($i = 1, \dots, k$) be a torsion surgery slope. 
If $p_{s_i}(h)$ is a non-trivial torsion element, 
then let $a_i$ be its order ($i = 1, \dots, k$). 
Let $n$ be a prime number coprime to $a_1, \dots, a_k$. 
Then  $p_{s}(h)^n \ne 1$ for any slope $s \in \mathbb{Q}$ whenever $s \not\in \mathcal{S}_K(h)$. 
In what follows we take and fix such a prime integer $n_0 > 0$. 

Let us show that there exists a constant $N_{n_0}$ such that 
\[
\mathcal{S}_K(g) \cap \mathcal{S}_K(h) \supset \mathcal{S}_K(g^mh^{n_0})
\]
for infinitely many integers $m \ge N_{n_0}$.

Indeed we show that if $s \not\in \mathcal{S}_K(g)$ or $s \not\in \mathcal{S}_K(h)$, 
then $s \not\in \mathcal{S}_K(g^mh^{n_0})$. 
We divide the argument into two cases depending upon $s \in \mathcal{S}_K(g) - \mathcal{S}_K(h)$ or $s \not\in \mathcal{S}_K(g)$. 

\medskip

\begin{claim}
\label{s_in_S}
For all integres $m$, we have $p_s(g^m h^{n_0}) \ne 1$ 
for any slope $s \in \mathcal{S}_K(g) - \mathcal{S}_K(h)$. 
\end{claim}

\begin{proof}
Since $s \in \mathcal{S}_K(g)$, 
we have $p_s(g^{m}h^{n_0}) = p_s(g)^m p_s(h)^{n_0} = p_s(h)^{n_0}$.  
On the other hand, since $s \not\in \mathcal{S}_K(h)$, 
by the choice of $n_0$, 
we have $p_s(h)^{n_0} \ne 1$, and hence $s \not\in \mathcal{S}_K(g^mh^{n_0})$. 
\end{proof}

\medskip
So we assume that $s \not\in \mathcal{S}_K(g)$. 

\begin{claim}
\label{s_not_in_S_non-hyperbolic}
There are infinitely many integers $m > 0$ such that $p_s(g^mh^{n_0}) \ne 1$ for any non-hyperbolic surgery slope $s \not \in \mathcal{S}_K(g)$. 
\end{claim}

\begin{proof}
Recall first that $\mathbb{Q} - \mathcal{S}_K(g)$ contains neither a finite surgery slope nor two reducing surgery slope by the assumption. 
Since a torsion surgery slope is either a finite surgery slope or a reducing surgery slope,  
there is at most one torsion surgery slope $t_0 \not\in \mathcal{S}_K(g)$. 

If $p_s(g)$ is not a torsion element, 
then there is at most one integer $m_s$ such that $p_s(g)^{m_s} = p_s(h^{-{n_0}})$, 
i.e. $p_s(g^{m_s}h^{n_0}) = 1$. 
Assume that $p_s(g)$ is a torsion element. 
Now we describe integers $m$ for which $p_s(g)^m = p_s(h^{-n_0})$. 
If we have no such an integer $m$, then for all integers $m$ we have $p_s(g^mh^{n_0}) \ne 1$. 
Otherwise, take a smallest such an integer $m_0$, i.e. 
$p_s(g)^{m_0} = p_s(h^{-n_0})$. 
If an integer $m\ne m_0$ also satisfies $p_s(g)^{m} = p_s(h^{-n_0})$, 
then $p_s(g)^{m}  = p_{s}(g)^{m_0}$, 
i.e. $p_{s}(g)^{m - m_0} = 1$. 
Since $p_{s}(g) \ne 1$, this means that $p_{s}(g)$ is a torsion element. 
Hence $s$ is the unique torsion slope $t_0 \in \mathbb{Q} - \mathcal{S}_K(g)$.
Let $b_0 \ge 2$ be the order of $p_{t_0}(g)$.  
Then $m - m_0$ is a multiple of $b_0$, 
i.e. $m = m_0 + k b_0$ for some integer $k$. 
Put $T_{n_0} = \{ m_0 + k b_0\}_{k \in \mathbb{Z}}$ so that 
$p_s(g)^{m} \ne p_s(h^{-n_0})$, i.e. $p_s(g^m h^{n_0}) \ne 1$ 
whenever $m \not\in T_{n_0}$.  

For each non-hyperbolic surgery slope $s$ except at most one torsion surgery slope $t_0$, 
there is at most one possible integer $m_s$ satisfying $p_s(g^{m_s} h^{n_0}) = 1$, 
and for the torsion surgery slope $t_0$ we have infinitely many positive integers $m \not\in T_{n_0}$ satisfying 
$p_s(g^{m_s} h^{n_0}) \ne 1$.  
Since there are only finitely many non-hyperbolic surgery slopes, 
there are infinitely many positive integers $m$ for which $p_s(g^{m} h^{n_0}) \ne 1$ as required. 
\end{proof}

\medskip

Following Claim~\ref{s_not_in_S_non-hyperbolic} let us denote by $M_{n_0}$ the infinite set of positive integers $m$ each of which satisfies 
$p_s(g^{m} h^{-n_0}) \ne 1$ for any non-hyperbolic surgery slope $s \not\in \mathcal{S}_K(g)$. 

\begin{claim}
\label{s_not_in_S_hyperbolic}
There exists a constant $C_{n_0}$ such that $p_s(g^mh^{n_0}) \ne 1$ for any hyperbolic surgery slope $s \not\in \mathcal{S}_K(g)$ whenever $m > C_{n_0}$. 
\end{claim}

\begin{proof}
Since $g$ is non-peripheral, 
it follows from Theorem~\ref{scl_bound}
we may take a constant $\delta_g > 0$ (depending only on $g$) so that for all hyperbolic surgery slopes $s$, 
\[
\mathrm{scl}_{\pi_1(K(s))}(p_s(g)) > \delta_g > 0\ \textrm{whenever}\ p_s(g) \ne 1,\ \textrm{i.e.}\  s \not\in \mathcal{S}_K(g).
\] 
Furthermore, 
since $h \in [G(K), G(K)]$, 
its stable commutator length satisfies $\mathrm{scl}_{G(K)}(h) < \infty$.  
Thus we may choose $m$ so that 
\[
m\, \delta_g > n_0\, \mathrm{scl}_{G(K)}(h)+\frac{1}{2}. 
\]
Put the constant $C_{n_0}$ as 
\[
C_{n_0} = \frac{n_0\, \mathrm{scl}_{G(K)}(h)+\frac{1}{2}}{\delta_g} > 0. 
\]

Let us separate the argument into two cases depending upon $g \in [G(K), G(K)]$ or not. 

\smallskip

\noindent
\textbf{Case 1.}\ 
$g \in [G(K), G(K)]$. \ 
Since $h$ also belongs to $[G(K), G(K)]$, 
$p_s(g), p_s(h) \in [\pi_1(K(s)), \pi_1(K(s))]$ and 
Lemmas~\ref{scl_product} and \ref{monotonicity} and the choice of $m > C_{n_0}$ imply  
\begin{align*}
\mathrm{scl}_{\pi_1(K(s))}(p_s(g^{m}h^{n_0})) & \ge m\, \mathrm{scl}_{\pi_1(K(s))}(p_s(g)) - n_0\,\mathrm{scl}_{\pi_1(K(s))}(p_s(h))-\frac{1}{2}\\
& > m\, \delta_{g} - n_0\, \mathrm{scl}_{G(K)}(h) - \frac{1}{2} > 0.
\end{align*}
This shows that $p_s(g^{m}h^{n_0}) \neq 1$, 
hence $s \not\in \mathcal{S}_K(g^mh^{n_0})$. 

\medskip

\noindent
\textbf{Case 2.}\  
$g \not\in [G(K), G(K)]$. \  
If $s \ne 0$, then $K(s)$ is a rational homology $3$--sphere and $\pi_1(K(s))/[\pi_1(K(s)),\ \pi_1(K(s))]$ is finite, 
so we may still apply Lemma~\ref{scl_product}. 
Since $m > C_{n_0}$, 
as in Case 1, we see that $p_s(g^mh^{n_0}) \ne 1$. 

It remains to consider the case where $s=0$ and $p_0(g) \not\in [\pi_1(K(0)),\ \pi_1(K(0))]$.  

We observe the following. 

\begin{claim}
\label{s=0}
If $p_0(g) \not\in [\pi_1(K(0)), \pi_1(K(0))]$,  
then $p_0(g^mh^{n_0}) \ne 1$ for all $m > 0$. 
\end{claim}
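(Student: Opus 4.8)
The plan is to detect $g^{m}h^{n}$ homologically in $K(0)$. Recall that $H_{1}(E(K);\mathbb{Z})\cong\mathbb{Z}$ is generated by the meridian class $[\mu]$, and that the longitude $\lambda$ is null-homologous in $E(K)$. Since $0$--Dehn filling kills $\langle\!\langle 0\rangle\!\rangle=\langle\!\langle\lambda\rangle\!\rangle$, the inclusion-induced map on first homology $H_{1}(E(K))\to H_{1}(K(0))$ is an isomorphism $\mathbb{Z}\cong\mathbb{Z}$, with both groups generated by $[\mu]$; in particular $H_{1}(K(0))$ is infinite.

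Next I would pass to the composite homomorphism $G(K)\to\pi_{1}(K(0))\to H_{1}(K(0))=\mathbb{Z}$, where the first arrow is $p_{0}$ and the second is abelianization. This composite agrees with the abelianization $G(K)\to H_{1}(E(K))=\mathbb{Z}$ followed by the isomorphism above, so for $w\in G(K)$ its image is simply the integer $[w]$ recording the homology class of $w$ in $H_{1}(E(K))$. The hypothesis $g\notin[G(K),G(K)]$ says precisely that $d:=[g]\neq 0$, whereas $h\in[G(K),G(K)]$ gives $[h]=0$.

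Finally, for any $m\neq 0$ the image of $g^{m}h^{n}$ under this composite equals $m[g]+n[h]=md\neq 0$, whence $p_{0}(g^{m}h^{n})\neq 1$ in $\pi_{1}(K(0))$, which is the assertion of Claim~\ref{s=0}. I expect no real obstacle here: the only point to keep in mind is that $0$ is the unique slope with $H_{1}(K(r))$ infinite, which is exactly why this homological shortcut is available precisely for $s=0$ --- the case that had to be set aside before invoking Lemma~\ref{scl_product}, whose hypothesis requires the relevant abelianization to be finite.
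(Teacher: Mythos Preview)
Your proof is correct and follows essentially the same approach as the paper: both arguments abelianize in $H_1(K(0))\cong\mathbb{Z}$, use that $h\in[G(K),G(K)]$ maps to $0$ while $g\notin[G(K),G(K)]$ maps to a nonzero class, and conclude $m[g]\neq 0$ for $m\neq 0$. Your presentation adds a little more context (the isomorphism $H_1(E(K))\to H_1(K(0))$ and why $s=0$ is singled out), but the core idea is identical.
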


\begin{proof}
Suppose that $p_0(g^mh^{n_0}) = 1$, i.e. $p_0(g)^m = p_0(h)^{-n_0}$. 
Abelianizing this we have $m [p_0(g)] = -n_0 [p_0(h)] \in H_1(K(0)) = \mathbb{Z}$. 
Since $h \in [G(K), G(K)]$, we have $[p_0(h)] = 0 \in H_1(K(0))$. 
On the other hand, 
since $p_0(g) \not\in [\pi_1(K(0)), \pi_1(K(0))]$,  
$[p_0(g)] \ne 0$. 
Hence, $m = 0$.  
\end{proof}

Hence $p_s(g^mh^{n_0}) \ne 1$ whenever $m > C_{n_0}$. 
This completes a proof of Claim~\ref{s_not_in_S_hyperbolic}.
\end{proof}

Following Claims~\ref{s_in_S}, \ref{s_not_in_S_non-hyperbolic} and \ref{s_not_in_S_hyperbolic}, 
we may observe that there are infinitely many integers $m$ with $C_{n_0} < m \in M_{n_0}$ for each prime integer $n_0$ coprime to $a_1, \dots, a_k$. 
This completes a proof of Lemma~\ref{S_K_intersection}. 
\end{proof}

\bigskip
\section{Realization Property}
\label{proof}

In this section, 
we apply Lemmas~\ref{separation} and \ref{S_K_intersection} to establish Theorem~\ref{realization}. 

\begin{thm_realization}[Realization Theorem]
Let $K$ be a hyperbolic knot. 
Then any finite subset $\mathcal{R} = \{ r_1, \ldots, r_n\} \subset \mathbb{Q}$
is realized by $\mathcal{S}_K(g)$ for some element $g \in [G(K), G(K)] \subset G(K)$ 
whenever the complement of $\mathcal{R}$ contains neither a Seifert surgery slope nor two reducing surgery slopes.  
\end{thm_realization}

\begin{proof}
Assume that $\mathcal{R} = \emptyset$. 
By the assumption $\mathbb{Q} - \mathcal{R} = \mathbb{Q}$ does not contain a Seifert  surgery slope, 
in particular, it does not contain a cyclic surgery slope. 
Then Theorem~\ref{hyperbolic_persistent_[G,G]_noncyclic} shows that we have an element $g \in [G(K), G(K)] \subset G(K)$ with $\mathcal{S}_K(g) = \emptyset = \mathcal{R}$. 
So in the following we assume $\mathcal{R} = \{ r_1, \dots, r_n \}$ is not empty.  
By the assumption the complement of $\mathcal{R}$ does not contain a Seifert surgery slope. 

Take a hyperbolic surgery slope $s$ (i.e. $K(s)$ is hyperbolic) which does not belong to $\mathcal{R}$, and set $\mathcal{S} = \{ s \}$. 
Then apply Lemma~\ref{separation} to find an element $g_1 \in [G(K), G(K)] \subset G(K)$ such that 
$\mathcal{R} \subset \mathcal{S}_K(g_1) \subset \mathbb{Q} - \mathcal{S}$, in particular 
\[
\{ r_1, \ldots, r_n \} \subset \mathcal{S}_K(g_1).
\]

If $\mathcal{S}_K(g_1) = \{ r_1, \ldots, r_n \}$, 
then $g_1$ is a desired element of $G(K)$. 

Assume that $\mathcal{S}_K(g_1) - \{ r_1, \ldots, r_n \}$ is not empty. 
Since $\mathcal{S}_K(g_1)$ is a finite subset of $\mathbb{Q}$, 
we may write 
\[
\mathcal{S}_K(g_1) - \{ r_1, \ldots, r_n \} =  \{ s_1, \ldots, s_m\}.
\]
By the assumption of Theorem~\ref{realization}, 
$s_i$ is not a Seifert surgery slope.

\begin{claim}
\label{g1g2}
There exists a non-trivial element $g_2 \in [G(K), G(K)]$ such that 
\[
\mathcal{S}_K(g_1) \cap \mathcal{S}_K(g_2) = \{ r_1, \ldots, r_n \}.
\]
Furthermore, if $g_1$ is peripheral, then $g_2$ is non-peripheral. 
\end{claim}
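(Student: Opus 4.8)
The plan is to obtain $g_{2}$ as a direct application of the Separation Theorem. By construction each $s_{i}$ belongs to $\mathcal{S}_{K}(g_{1})\setminus\mathcal{R}\subset\overline{\mathcal{R}}$, so the standing hypothesis of Theorem~\ref{realization} guarantees that no $s_{i}$ is a Seifert surgery slope; moreover $\mathcal{R}\cap\{s_{1},\dots,s_{m}\}=\emptyset$, and $K$, being hyperbolic, is not a torus knot. Hence Theorem~\ref{separation}, applied with the pair $\mathcal{R}=\{r_{1},\dots,r_{n}\}$ and $\mathcal{S}=\{s_{1},\dots,s_{m}\}$, produces $g_{2}\in[G(K),G(K)]$ with $\mathcal{R}\subset\mathcal{S}_{K}(g_{2})$ and $\{s_{1},\dots,s_{m}\}\cap\mathcal{S}_{K}(g_{2})=\emptyset$. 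Since both $\mathcal{S}_{K}(g_{1})$ and $\mathcal{S}_{K}(g_{2})$ contain $\mathcal{R}$, while $\mathcal{S}_{K}(g_{1})\cap\mathcal{S}_{K}(g_{2})\subset\mathcal{S}_{K}(g_{1})=\mathcal{R}\cup\{s_{1},\dots,s_{m}\}$ meets no $s_{i}$, we get $\mathcal{S}_{K}(g_{1})\cap\mathcal{S}_{K}(g_{2})=\mathcal{R}$. As $\{s_{1},\dots,s_{m}\}\neq\emptyset$, the element $g_{2}$ is non-trivial.

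For the ``furthermore'', the point is that the case ``$g_{1}$ peripheral'' is extremely rigid. A peripheral element of $G(K)$ that also lies in $[G(K),G(K)]$ is conjugate to a null-homologous element of $P(K)\cong\mathbb{Z}\oplus\mathbb{Z}$, hence to $\lambda^{d}$ for some $d\neq 0$ since $H_{1}(E(K))=\langle[\mu]\rangle$. Thus $\mathcal{S}_{K}(g_{1})=\mathcal{S}_{K}(\lambda^{d})$ by Proposition~\ref{S_K_class_function}, and by Proposition~\ref{slope} together with the absence of finite surgeries we obtain $\mathcal{S}_{K}(\lambda^{d})\subset\{0\}$; since $p_{0}(\lambda^{d})=1$ this forces $\mathcal{S}_{K}(g_{1})=\{0\}$. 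Because here $\{s_{1},\dots,s_{m}\}=\mathcal{S}_{K}(g_{1})\setminus\mathcal{R}\neq\emptyset$, we are forced to have $\mathcal{R}=\emptyset$ and $\{s_{1},\dots,s_{m}\}=\{0\}$, so all that is needed is a \emph{non-peripheral} $g_{2}\in[G(K),G(K)]$ with $p_{0}(g_{2})\neq 1$.

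Such an element is already at hand: as in the proof of Proposition~\ref{persistent_[G,G]_hyp}, take a minimal genus Seifert surface $\Sigma$ of $K$ and an element $y\in\pi_{1}(\Sigma)\subset G(K)$ with $[y]\neq 0$ in $H_{1}(\Sigma)$ (possible since $g(K)\geq 1$). Then $y\in[G(K),G(K)]$, $y$ is non-peripheral in $G(K)$ by Claim~\ref{non-conjugate_non-peripheral}, and $y$ stays non-trivial in $\pi_{1}(K(0))$ because the capped-off surface $\widehat{\Sigma}$ is incompressible there; hence $0\notin\mathcal{S}_{K}(y)$ and $\mathcal{S}_{K}(g_{1})\cap\mathcal{S}_{K}(y)=\{0\}\cap\mathcal{S}_{K}(y)=\emptyset=\mathcal{R}$. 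Setting $g_{2}=y$ finishes this case.

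The only genuine content is the bookkeeping check that Theorem~\ref{separation} is applicable, i.e. that no $s_{i}$ is a Seifert surgery slope; this is precisely the place where the hypothesis of Theorem~\ref{realization} gets consumed. The peripheral sub-case looks like it could be an obstacle, but it collapses entirely once one observes that a peripheral element of the commutator subgroup must be a power of $\lambda$ and therefore has $\mathcal{S}_{K}$ equal to $\{0\}$; no ideas beyond those already developed in earlier sections are required.
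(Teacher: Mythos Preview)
Your argument is correct and the main construction—applying Theorem~\ref{separation} to the pair $(\mathcal{R},\{s_1,\dots,s_m\})$—is exactly what the paper does. For the peripheral ``furthermore'' sub-case the paper takes a slightly different route: it enlarges $\mathcal{R}$ by an auxiliary slope $r_2\neq 0$ and applies Theorem~\ref{separation} again to obtain $g_2$ with $\{0,r_2\}\subset\mathcal{S}_K(g_2)$, then notes that $|\mathcal{S}_K(g_2)|\geq 2$ rules out peripherality (a peripheral element of $[G(K),G(K)]$ has $\mathcal{S}_K=\{0\}$). Your direct Seifert-surface construction of $g_2=y$ is equally valid and more explicit; your observation that necessarily $\mathcal{R}=\emptyset$ in this sub-case (since $\mathcal{S}_K(g_1)=\{0\}$ and $m\geq 1$) is in fact sharper than the paper's own bookkeeping. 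As a side remark, the $g_2$ produced in your first paragraph already satisfies $0\notin\mathcal{S}_K(g_2)$ in the peripheral sub-case and is therefore automatically non-peripheral, so the separate construction, while correct, is not strictly needed.
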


\begin{proof} 
We apply Lemma~\ref{separation} again to get an element $g_2 \in [G(K), G(K)] \subset G(K)$ which satisfies
\[
\{r_1, \ldots, r_n \} \subset \mathcal{S}_K(g_2) \subset \mathbb{Q} - \{ s_1, \ldots, s_m \}.
\]
Then 
\[
\mathcal{S}_K(g_1) \cap \mathcal{S}_K(g_2) = \{ r_1, \ldots, r_n \}.
\]

Suppose that $g_1$ is peripheral, i.e. it is conjugate to a power of $\mu^p\lambda^q$ for some coprime integers $p, q$. 
Then we will prove that $g_2$ is non-peripheral. 
Since $g_1 \in  [G(K), G(K)]$, it is conjugate to $\lambda^{\ell_1}$ for some non-zero integer $\ell_1$.   
Let us take a slope $s_1 \in \mathcal{S}_K(g_1)$, which is not a finite surgery slope. 
Note that $g_1 \in \langle \! \langle s_1 \rangle\!\rangle$, 
and hence $\lambda^{\ell_1} \in \langle \! \langle s_1 \rangle\!\rangle$. 
Since $s_1$ is not a finite surgery slope, 
Proposition~\ref{slope} shows that $s_1 = 0$. 
On the other hand, 
since $\mathcal{S}_K(g_2) \subset \mathbb{Q} - \{ s_1, \ldots, s_m \}$, 
$g_2 \not\in  \langle \! \langle s_1 \rangle\!\rangle =  \langle \! \langle 0 \rangle\!\rangle$. 
Recall that $g_2 \in [G(K), G(K)]$. 
So if $g_2$ is peripheral, then $g_2$ is also conjugate to $\lambda^{\ell_2}$ for some non-zero integer $\ell_2$, 
and hence $g_2 \in  \langle \! \langle 0 \rangle\!\rangle$.  
This is a contradiction. 
Hence, $g_2$ is a non-peripheral element. 
\end{proof}

\medskip

Recall that $g_i  \in [G(K), G(K)]$ and 
$\mathbb{Q} - \mathcal{S}_K(g_i) \subset \mathbb{Q} - \mathcal{R}$ 
contains neither Seifert (and hence finite) surgery slope nor two reducing surgery slope by the assumption for $i = 1, 2$. 

If $g_1$ is non-peripheral, 
then following Lemma~\ref{S_K_intersection} 
there exist integers $\ell$ and $m_0$ such that 
\[
\mathcal{S}_K(g_1^{\ell} g_2^{m_0}) = \mathcal{S}_K(g_1) \cap \mathcal{S}_K(g_2). 
\]
Then Claim~\ref{g1g2} shows 
\[
\mathcal{S}_K(g_1^{\ell} g_2^{m_0}) = \{ r_1, \dots, r_n \}.
\]
Note that $\ell$ may be arbitrarily large; see Lemma~\ref{S_K_intersection}.

If $g_1$ is peripheral, then as shown by Claim~\ref{g1g2},  
$g_2$ is non-peripheral. 
Then following Claim~\ref{g1g2} and  Lemma~\ref{S_K_intersection} 
there exist integers $\ell_0$ and $m$ such that 
\[
\mathcal{S}_K(g_1^{\ell_0} g_2^m) = \mathcal{S}_K(g_1) \cap \mathcal{S}_K(g_2) = \{ r_1, \dots, r_n \}. 
\]
Note that $m$ may be arbitrarily large. 

Put $g = g_1^{\ell} g_2^{m_0}$ (if $g_1$ is non-peripheral) 
or $g_1^{\ell_0} g_2^m$ (if $g_1$ is peripheral). 
Then, $\mathcal{S}_K(g) = \mathcal{R}$ as desired, 
and since $g_1$ and $g_2$ belong to $[G(K), G(K)]$, 
$g\in [G(K), G(K)]$ as well. 
This completes a proof of Theorem~\ref{realization}. 
\end{proof}

\medskip 

\begin{remark}
\label{realization_non-peripheral}
\begin{enumerate}
\renewcommand{\labelenumi}{(\arabic{enumi})}
\item 
An element $g \in [G(K), G(K)]$ with $\mathcal{S}_K(g) = \emptyset$ is necessarily 
non-peripheral. 
Indeed, if $g$ is conjugate to a power of a slope element $\mu^p\lambda^q$, 
then $(p, q) = (0, \pm 1)$ and $\mathcal{S}_K(g)$ contains $p/q = 0$, contradicting the assumption.

\item
Suppose that $\mathcal{R}$ is a non-empty finite subset of $\mathbb{Q}$. 
Then as in the proof of Theorem~\ref{realization}, 
$\mathcal{R}$ is realized by 
$\mathcal{S}_K(g)$ for 
some element $g = g_1^{\ell} g_2^{m_0}$ \(if $g_1$ is non-peripheral\) or $g_1^{\ell_0} g_2^m$ \(if $g_1$ is peripheral\). 
Note that we may choose integers $\ell$ and $m$ arbitrarily large. 
So by Lemma~\ref{product_non-peripheral} and Remark~\ref{commute} 
we may take 
$g = g_1^{\ell} g_2^{m_0}$ \(if $g_1$ is non-peripheral\) 
is non-peripheral, 
similarly 
$g_1^{\ell_0} g_2^m$ \(if $g_1$ is peripheral\) is non-peripheral. 
\end{enumerate}
\end{remark}

\medskip

The proof of Theorem~\ref{realization} suggests that $\mathcal{R}$ is realized by $\mathcal{S}_K(\alpha_i)$ for infinitely many elements $\alpha_1, \alpha_2, \dots$. 
However, they may be conjugate or they belong to a single cyclic subgroup of $G(K)$. 
In the next section we will prove that we may take infinitely many, mutually non-conjugate elements $\alpha_i$ so that $\mathcal{S}_K(\alpha_i) = \mathcal{R}$, 
but each $\alpha_i$ is not conjugate to any power of $g$. 

\bigskip

\section{Elements $\alpha, \beta$ with $\mathcal{S}_K(\alpha) = \mathcal{S}_K(\beta)$} 
\label{identical trivialization}

In this section we investigate elements $h \in G(K)$ such that $\mathcal{S}_K(h) = \mathcal{S}_K(g)$ for a given element $g \in G(K)$. 
Since $\mathcal{S}_K$ is a class function,  
$\mathcal{S}_K(g) = \mathcal{S}_K(\alpha^{-1} g \alpha)$ for any $\alpha$. 

For non-conjugate elements, 
we begin by providing an obvious example. 

\begin{proposition}
\label{identical_S_K_power}
Let $K$ be a hyperbolic knot.  
Then for any non-trivial element $g$ we have the following.
\begin{enumerate}
\renewcommand{\labelenumi}{(\arabic{enumi})}
\item
$\mathcal{S}_K(g) = \mathcal{S}_K(g^n)$ for any $n \ne 0$ if $K$ has no torsion surgery. 
\item
$g^m$ is conjugate to $g^n$ if and only if $m = n$
\end{enumerate}
\end{proposition}

\begin{proof}
(1) Since $K$ has no torsion surgery slope, 
Proposition~\ref{S_K(g)_S_K(g^n)} shows that $\mathcal{S}_K(g) = \mathcal{S}_K(g^n)$ for all $n \ne 0$. 

(2) 
Recall that $G(K)$ has no torsion element. 
We divide the argument into two cases depending upon $g$ belongs to $[G(K), G(K)]$ or not. 

Assume first that $g \not\in [G(K), G(K)]$, i.e. it is homologically non-trivial, 
then obviously $g^m$ and $g^n$ are not conjugate when $m \ne n$. 

Let us suppose that $g \in [G(K), G(K)]$.  
We first observe that $\mathrm{scl}_{G(K)}(g) > 0$.  
If $g$ is non-peripheral, 
then following Theorem~\ref{scl_bound} and Lemma~\ref{monotonicity} we have $\mathrm{scl}_{G(K)}(g) > 0$. 
If $g$ is peripheral, 
then $g$ is conjugate to $\lambda^k$ for some $k \ne 0$, where $\lambda$ is a longitude of $K$. 
Then by Lemma~\ref{scl_g^k} and Remark~\ref{scl_longitude} $\mathrm{scl}_{G(K)}(g) = |k| \mathrm{scl}_{G(K)}(\lambda) \ge |k|/2 > 0$. 
Assume that $g^m$ and $g^n$ are conjugate. 
Then since the stable commutator length is invariant under conjugation,  
we have $\mathrm{scl}_{G(K)}(g^m) = \mathrm{scl}_{G(K)}(g^n)$. 
Following Lemma~\ref{scl_g^k}, this implies $n = \pm m$. 
We exclude the possibility $n = -m$. 
Assume for a contradiction that $g^m$ and $g^{-m}$ are conjugate, 
i.e. $g^m = h^{-1}g^{-m}h$ for some $h$. 
Then $g^{2km}= g^{km}g^{km} = g^{km}h^{-1}g^{-km}h =[g^{km}, h^{-1}]$
so $\mathrm{cl}_{G(K)}(g^{2km})=1$ for all $k >0$, 
hence $\mathrm{scl}_{G(K)}(g)=0$, contradicting $\mathrm{scl}_{G(K)}(g) >0$.
\end{proof}

The aim of this subsection is to prove the following theorem which requires an element $g \in G(K)$ to be non-peripheral, 
but we may take infinitely many elements $h$ with $\mathcal{S}_K(h) = \mathcal{S}_K(g)$ so that they are mutually non-conjugate and 
not conjugate to any power of $g$.

\begin{thm_non_rigid}
Let $K$ be a hyperbolic knot without torsion surgery slope. 
For any non-peripheral element $g \in [G(K), G(K)]$ there are infinitely many, non-conjugate elements 
$\alpha_m \in G(K)$ which enjoy the following. 
\begin{enumerate}
\renewcommand{\labelenumi}{(\arabic{enumi})}
\item
$\mathcal{S}_K(\alpha_m) = \mathcal{S}_K(g)$, and
\item
$\alpha_m$ is not conjugate to any power of $g$, 
in particular, $\alpha_m$ does not lie in the cyclic group generated by $g$. 
\end{enumerate}
\end{thm_non_rigid}

Before proving this theorem we recall that a group $G$ is \textit{residually finite\/} if for each non-trivial element $g$ in $G$, 
there exists a normal subgroup of finite index not containing $g$.  
This is equivalent to say that for every $1\neq g \in G$ there exists a homomorphism 
$\varphi \colon G \rightarrow F$ to some finite group $F$ such that $\varphi(g)\neq 1$. 

It follows from \cite{Hem_residual_finite,Pe1,Pe2,Pe3} that the fundamental group of every compact $3$--manifold is residually finite.
For later convenience, 
we slightly strengthen this to the following form.

\begin{proposition}
\label{RF_strong}
Let $G$ be the fundamental group of a compact $3$--manifold. 
Then for any finite family of non-trivial elements $g_1, \ldots, g_n \in G$, 
 there exists a homomorphism 
$\varphi \colon G \rightarrow F$ to some finite group $F$ such that $\varphi(g_i)\neq 1$. 
\end{proposition}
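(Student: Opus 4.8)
The statement is the straightforward strengthening of residual finiteness from a single nontrivial element to a finite family of them, so the plan is to reduce to the single-element case via a product-of-quotients construction. Given nontrivial $g_1, \ldots, g_n \in G$, for each $i$ apply residual finiteness of $G$ to obtain a finite group $F_i$ and a homomorphism $\varphi_i \colon G \to F_i$ with $\varphi_i(g_i) \neq 1$. Then form the product map
\[
\varphi = (\varphi_1, \ldots, \varphi_n) \colon G \to F_1 \times \cdots \times F_n =: F.
\]
The target $F$ is a finite group, being a finite direct product of finite groups, and $\varphi$ is a homomorphism since each coordinate is.

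\textbf{Key step: nontriviality on each generator.} For each fixed $i$, the $i$-th coordinate of $\varphi(g_i)$ is $\varphi_i(g_i) \neq 1$ in $F_i$; hence $\varphi(g_i) \neq 1$ in $F$. This holds simultaneously for all $i = 1, \ldots, n$, which is exactly the assertion. This completes the argument.

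\textbf{Main obstacle.} Honestly there is no real obstacle here — the proof is a one-paragraph diagonal/product argument and uses nothing beyond the definition of residual finiteness already recalled in the excerpt. The only point requiring (trivial) care is that the product of finitely many finite groups is finite, so that $\varphi$ indeed maps to a finite group as required by the statement. One could alternatively phrase it via normal subgroups: pick finite-index normal $N_i \trianglelefteq G$ with $g_i \notin N_i$, set $N = \bigcap_{i=1}^n N_i$, which is finite-index and normal, and take $\varphi \colon G \to G/N$; then $g_i \notin N$ gives $\varphi(g_i) \neq 1$. Either formulation works and I would simply write out whichever reads more cleanly in context (the product-of-homomorphisms version matches the "homomorphism to a finite group" phrasing of the statement most directly).
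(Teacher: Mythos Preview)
Your proof is correct and essentially identical to the paper's: both take individual homomorphisms $\varphi_i \colon G \to F_i$ with $\varphi_i(g_i) \neq 1$ and combine them into the product map $\varphi = (\varphi_1, \ldots, \varphi_n) \colon G \to F_1 \times \cdots \times F_n$. The alternative formulation via intersecting finite-index normal subgroups that you mention is not in the paper but is of course equivalent.
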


\begin{proof}
Since $G$ is residually finite, 
we have a homomorphism $\varphi_i \colon G \to F_i$ to a finite group $F_i$ such that $\varphi_i(g_i) \ne 1$ for $i = 1, \ldots, n$. 
Consider a homomorphism $\varphi$ 
from $G$ to  a finite group $F = F_1 \times \cdots \times F_n$ such that 
$\varphi(g) = (\varphi_1(g), \ldots, \varphi_n(g)) \in F$. 
Then $\varphi$ is a desired homomorphism from $G$ to the finite group $F$.  
\end{proof}

\medskip

\begin{proof}[Proof of Theorem~\ref{non_rigid}]
(1) Let $g$ be a non-peripheral element in $[G(K), G(K)]$. 
Since $\mathcal{S}_K(g)$ is a finite set, 
we may take a hyperbolic surgery slope $s$ so that $s \ne 0$ and $s \not\in \mathcal{S}_K(g)$, i.e. $p_s(g) \ne 1$. 

Since $G(K)$ is residually finite, 
following Proposition~\ref{RF_strong} we have an epimorphism $\varphi \colon G(K) \to F$ from 
$G(K)$ to a finite group $F$ such that $\varphi(g) \ne 1$ and $\varphi([g, s]) \ne 1$. 
(Since $g$ is non-peripheral and $s$ is peripheral, they do not commute and $[g, s] \ne 1$. 
Actually, if a non-trivial element $g$ commutes with a peripheral element $s$, 
then the images of $g$ by a holonomy representation is also a parabolic element fixing the same fixed point in the sphere at infinity 
$\partial \mathbb{H}^3$. This means $g$ is also peripheral and contradicts the assumption of $g$. 
See also \cite[Theorem 1]{Sim2}. )

Then choose  (and fix) an integer $p > 1$ so that $\varphi(g^p) = \varphi(g)^p = 1$ in $F$. 

Put $h = s^{-1} g s$ and consider elements 
\[
g^{p+pm-1} s^{-1} g^{-p+1} s = g^{p+pm-1} h^{-p+1} \in G(K), 
\]
where $m \in \mathbb{Z}$. 

Note that since $g \in [G(K), G(K)]$, $h$ also belongs to $[G(K), G(K)]$. 
It follows from Lemma~\ref{S_K_intersection} and Remark~\ref{shrink_torsion_free} 
that for a given non-zero integer $-p+1$,  
there exists a constant $N = N_{-p+1}> 0$ such that for any integer $m \ge N$, 
we have 
\[
\mathcal{S}_K(g^{p + pm -1} h^{-p+1}) = \mathcal{S}_K(g) \cap \mathcal{S}_K(h),  
\]
which coincides with $\mathcal{S}_K(g)$, 
because $\mathcal{S}_K(h) = \mathcal{S}_K(s^{-1} g s) = \mathcal{S}_K(g)$. 

In the following for integers $m \ge N$, 
we put 
\[
\alpha_m = g^{p+pm-1} s^{-1} g^{-p+1} s = g^{p+pm-1} h^{-p+1} \in G(K), 
\]
for which $\mathcal{S}_K(\alpha_m) = \mathcal{S}_K(g)$.  

\begin{claim}
\label{infinite_conjugacy_m}
There are infinitely many integers $m \ge N$ such that 
$\alpha_m = g^{p + pm -1} h^{-p+1}$ are mutually non-conjugate elements in $G(K)$. 
\end{claim}

\begin{proof}
Recall that $g$ is a non-peripheral element in $[G(K), G(K)]$, 
and $h = s^{-1} g s \in [G(K), G(K)]$. 
(Although $g$ is non-peripheral, 
since $h$ is also non-peripheral, 
we cannot apply  Lemma~\ref{product_non-peripheral} (2); compare with Lemma~\ref{infinite_conjugacy}.) 

Since $g$ is non-peripheral, $\textrm{scl}_{G(K)}(g)>0$ (Theorem~\ref{scl_bound} and Lemma~\ref{monotonicity}). 
Furthermore, since $g, h \in [G(K), G(K)]$, 
by Lemma~\ref{scl_product} we have
\[
\mathrm{scl}_{G(K)}(g^{p + pm -1} h^{-p+1}) \ge (p + pm -1) \mathrm{scl}_{G(K)}(g) -\mathrm{scl}_{G(K)}(h^{-p+1}) -\frac{1}{2}.
\]
Hence $\lim_{m\to \infty} \mathrm{scl}_{G(K)}(g^m h^{n_0}) \to \infty$. 
Since stable commutator length is invariant under conjugation, 
this shows that $\{ \alpha_m \} = \{ g^{p + pm -1} h^{-p+1} \}$ contains infinitely many mutually non-conjugate elements. 
\end{proof}

\medskip

(2) Let us prove: 

\begin{claim}
\label{not_conjugate_power}
$\alpha_m$ is not conjugate to $g^k$ for any integer $k$. 
\end{claim}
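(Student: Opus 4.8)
The plan is to use the image of $\alpha_m$ under the chosen finite quotient $\varphi\colon G(K)\to F$ to obstruct conjugacy to any power $g^k$. First I would compute $\varphi(\alpha_m)$. Since $h=sgs^{-1}$ and $\varphi(g^p)=1$ in $F$, we have $\varphi(g^{p+pm-1})=\varphi(g)^{pm}\varphi(g)^{p-1}=\varphi(g)^{-1}$ (as $\varphi(g)^{pm}=1$), and $\varphi(h^{-p+1})=\varphi(s)\varphi(g)^{-p+1}\varphi(s)^{-1}=\varphi(s)\varphi(g)\varphi(s)^{-1}$ (using $\varphi(g)^{-p}=1$). Hence
\[
\varphi(\alpha_m)=\varphi(g)^{-1}\,\varphi(s)\,\varphi(g)\,\varphi(s)^{-1}=[\varphi(g),\varphi(s)^{-1}],
\]
which is independent of $m$; call this element $c\in F$. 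Since $\varphi([g,s])\neq 1$, one checks $c\neq 1$ as well (these differ by a conjugation: $[\varphi(g),\varphi(s)^{-1}]=\varphi(s)^{-1}[\varphi(s),\varphi(g)]\varphi(s)$, and $[\varphi(s),\varphi(g)]=\varphi([s,g])=\varphi([g,s])^{-1}\neq 1$). So $\varphi(\alpha_m)=c$ is a fixed non-trivial element of $F$.

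Next I would suppose for contradiction that $\alpha_m$ is conjugate in $G(K)$ to $g^k$ for some integer $k$. Applying $\varphi$, the element $c=\varphi(\alpha_m)$ would be conjugate in $F$ to $\varphi(g)^k$. In particular $\varphi(g)^k$ is conjugate to $c$, which is a commutator of the special form above; more usefully, the conjugacy class of $\varphi(g)^k$ in $F$ depends only on $k\bmod p$ (since $\varphi(g)^p=1$), so there are only finitely many possible conjugacy classes arising this way, namely those of $\varphi(g)^0,\dots,\varphi(g)^{p-1}$. The key point is that this forces a constraint on $k$ that I can then combine with a second invariant — stable commutator length or a homogeneous quasimorphism — to pin down $k$ and derive a contradiction. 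Concretely, since $g\in[G(K),G(K)]$ is non-peripheral, Theorem~\ref{scl_bound} together with Lemma~\ref{monotonicity} gives $\mathrm{scl}_{G(K)}(g)>0$; by Lemma~\ref{scl_g^k}, $\mathrm{scl}_{G(K)}(g^k)=|k|\,\mathrm{scl}_{G(K)}(g)$. On the other hand $\alpha_m=g^{p+pm-1}h^{-p+1}$ with $h$ conjugate to $g$, so a homogeneous quasimorphism $\phi$ with $\phi(g)>0$ (which exists by Bavard duality, since $\mathrm{scl}_{G(K)}(g)>0$) satisfies, using $\phi(h)=\phi(g)$ and the quasimorphism inequality,
\[
\bigl|\phi(\alpha_m)-(p+pm-1)\phi(g)-(-p+1)\phi(g)\bigr|\le D(\phi),
\]
i.e. $\phi(\alpha_m)=pm\,\phi(g)+O(1)$, which tends to $+\infty$ as $m\to\infty$. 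If $\alpha_m$ were conjugate to $g^k$ then $\phi(\alpha_m)=\phi(g^k)=k\,\phi(g)$ (homogeneous quasimorphisms are conjugation-invariant, Lemma~\ref{quasimorphism_class_function}), forcing $k=k(m)\to\infty$ with $m$; in particular $k$ is large and positive for large $m$.

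Finally I would close the argument by playing the two invariants against each other. For $m$ large (hence $k=k(m)$ large), the residue $k\bmod p$ must lie in $\{0,\dots,p-1\}$, so the conjugacy class of $\varphi(g)^k$ is one of finitely many fixed classes; since $\varphi(\alpha_m)=c$ is a single fixed class, this already says nothing impossible by itself — so the real leverage is: the quasimorphism value $\phi(\alpha_m)=pm\,\phi(g)+O(1)$ forces $k\,\phi(g)=pm\,\phi(g)+O(1)$, hence $k=pm+O(1)$, so for large $m$ we may choose (by possibly enlarging $N$) $m$ so that $k=k(m)$ is \emph{not} a multiple of $p$ — indeed $k-pm$ is bounded, so among large $m$ there are infinitely many with $k\not\equiv 0\pmod p$ unless $k-pm$ is eventually $\equiv 0$, and in the latter case $\varphi(g)^k=\varphi(g)^{pm+(\text{mult.\ of }p)}=1$, contradicting that $c=\varphi(\alpha_m)\neq 1$ is conjugate to $\varphi(g)^k=1$. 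I expect the main obstacle to be making this last dichotomy fully rigorous: one must argue carefully that the bounded error $k-pm$ cannot conspire to always land in the ``bad'' residues, which likely requires tracking the $O(1)$ term explicitly via $\phi$ and using that it is genuinely bounded uniformly in $m$. Once that is settled, $g^k=\varphi(\alpha_m)$-image being forced to be trivial contradicts $c\neq 1$, completing the proof that $\alpha_m$ is not conjugate to any power of $g$.
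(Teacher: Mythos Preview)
Your computation of $\varphi(\alpha_m)=c\neq 1$ is correct and matches the paper exactly. The gap is in the second half. Your quasimorphism estimate only yields $|k-pm|\le D(\phi)/\phi(g)$, a bounded but nonzero error; this does \emph{not} determine $k\bmod p$, so $\varphi(g)^k$ could be any one of finitely many fixed powers of $\varphi(g)$, and nothing prevents one of those from being conjugate to $c$ in $F$. Your dichotomy does not close: the case $k\not\equiv 0\pmod p$ is simply left open, and indeed there is no obstruction there from $\varphi$ alone. No amount of ``tracking the $O(1)$ term'' will fix this, because a homogeneous quasimorphism on $G(K)$ cannot distinguish $g^{p+pm-1}(sgs^{-1})^{-p+1}$ from $g^{pm}$ better than up to defect.

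The idea you are missing is to use the Dehn filling epimorphism $p_s\colon G(K)\to\pi_1(K(s))$ instead of a quasimorphism on $G(K)$. Since $p_s(s)=1$, one gets the \emph{exact} identity $p_s(\alpha_m)=p_s(g)^{pm}$ with no error term. If $\alpha_m$ were conjugate to $g^k$, then $p_s(g)^{pm}$ would be conjugate to $p_s(g)^{k}$ in $\pi_1(K(s))$. Now use scl in $\pi_1(K(s))$: the slope $s$ was chosen to be hyperbolic with $p_s(g)\neq 1$, so Theorem~\ref{scl_bound} gives $\mathrm{scl}_{\pi_1(K(s))}(p_s(g))>0$, and conjugation invariance of scl forces $|k|=|pm|$, i.e.\ $k=\pm pm$. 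Then $\varphi(g^{k})=\varphi(g)^{\pm pm}=1$, contradicting that it must be conjugate to $c\neq 1$. This is exactly the paper's argument; the passage through $p_s$ is what converts your approximate relation into an exact one.
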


\begin{proof}
Assume for a contradiction that 
$\alpha_m$ is conjugate to $g^{\ell_m}$ for some integer $\ell_m$.
Then 
$p_s(\alpha_m) = p_s(g^{p+pm-1} s^{-1} g^{-p+1} s)$ is also conjugate to $p_s(g)^{\ell_m}$ in $\pi_1(K(s))$. 
Since 
\[
p_s(g^{p+pm-1} s^{-1} g^{-p+1} s) 
= p_s(g)^{p+pm-1} p_s(s)^{-1} p_s(g)^{-p+1} p_s(s) 
= p_s(g)^{pm}, 
\] 
$p_s(g)^{pm}$ is conjugate to $p_s(g)^{\ell_m}$ in $\pi_1(K(s))$. 

Recall that we choose the slope $s$ so that it is a hyperbolic surgery slope, 
and the non-peripheral element $g$ satisfies $p_s(g) \ne 1$. 
Thus by Theorem~\ref{scl_bound}, we have $\mathrm{scl}_{\pi_1(K(s))}(p_s(g)) > 0$. 

Hence, 
we have the equality 
\begin{align*}
|pm| \mathrm{scl}_{\pi_1(K(s))}(p_s(g)) 
&= 
\mathrm{scl}_{\pi_1(K(s))}(p_s(g)^{pm}) 
= 
\mathrm{scl}_{\pi_1(K(s))}(p_s(g)^{\ell_m}) \\
&=
|\ell_m| \mathrm{scl}_{\pi_1(K(s))}(p_s(g)),
\end{align*}
which implies $\ell_m = \pm  pm$.

On the other hand, 
$\varphi(\alpha_m) = \varphi(g^{p+pm-1} s^{-1} g^{-p+1} s)$ is conjugate to $\varphi(g^{\ell_m}) = \varphi(g^{\pm pm})$. 
By the choice of $p$, 
$\varphi(g^{p+pm-1} s^{-1} g^{-p+1} s) = \varphi(g^{-1} s^{-1} g s)  \ne 1$, 
but $\varphi(g^{\pm pm}) = 1$. 
This is a contradiction. 
So $\alpha_m$ is not conjugate to $g^k$ for any integer $k$. 
\end{proof}

Claims~\ref{not_conjugate_power} and \ref{infinite_conjugacy_m} complete a proof of Theorem~\ref{non_rigid}.
\end{proof}

\medskip

\begin{remark}
$\alpha_m$ and $\alpha_n$ $(m < n)$ do not commute. 
In particular, $\alpha_n$ is not any power of $\alpha_m$. 
\end{remark}

\begin{proof}
Note that 
\[
\alpha_n = g^{p+pn -1} h^{-p+1} = g^{p(n-m)} g^{p+pm -1} h^{-p+1} = g^{p(n-m)} \alpha_m.
\] 

Assume that $\alpha_m \alpha_n = \alpha_n \alpha_m$. 
Then we have $\alpha_m (g^{p(n-m)}\alpha_m) =  (g^{p(n-m)}\alpha_m) \alpha_m$, 
i.e. $\alpha_m g^{p(n-m)} = g^{p(n-m)} \alpha_m$. 
This means that $\alpha_m$ belongs to the centralizer of $g^{p(n-m)}$. 
Since $g$ is non-peripheral,  
so $g^{p(n-m)}$ is also non-peripheral, and hence its centralizer is a cyclic group generated by $g$. 
Hence,  $\alpha_m = g^k$ for some integer $k$. 
This contradicts $\alpha_m$ not being conjugate to any powers of $g$. 
\end{proof}

\section*{Acknowledgements}
The authors would like to thank the referee
for careful reading and valuable comments.

\section*{Funding}
This work was supported by Japan Society for the Promotion of Science, KAKENHI
[Grant Number 19K03490, 21H04428, 23K03110 to T. I.,  
25K07018, 21H04428, 23K03110, 23K20791 to K. M.,  and 
20K03587, 25K07004 to M. T.].


\end{document}